\newcommand{\blind}{0}
\newtheorem{thm}{Theorem}[section]
\newtheorem{lem}[thm]{Lemma}
\newtheorem{cor}[thm]{Corollary}
\newtheorem{ass}{Assumption}
\newtheorem{prop}[thm]{Proposition}
\theoremstyle{definition}
\newtheorem{rmk}{Remark}
\newtheorem{defn}[thm]{Definition}
\title{Confidence regions for the location of peaks of a smooth random field}
\author{Samuel Davenport, Thomas E. Nichols, Armin Schwartzman\\}
\date{\vspace{-1cm}}
\def\jasa{0}
\begin{document}
\maketitle

\def\spacingset#1{\renewcommand{\baselinestretch}%
{#1}\small\normalsize} \spacingset{1}

\begin{abstract}
	Local maxima of random processes are useful for finding important regions and are routinely used, for summarising features of interest (e.g. in neuroimaging).  In this work we provide confidence regions for the location of local maxima of the mean and standardized effect size (i.e. Cohen's $ d $) given multiple realisations of a random process. We prove central limit theorems for the location of the maximum of mean and $ t $-statistic random fields and use these to provide asymptotic confidence regions for the location of peaks of the mean and Cohen's $ d $. Under the assumption of stationarity we develop Monte Carlo confidence regions for the location of peaks of the mean that have better finite sample coverage than regions derived based on classical asymptotic normality. We illustrate our methods on 1D MEG data and 2D fMRI data from the UK Biobank.\\
\noindent%
{\it Keywords: Random fields, spatial statistics, monte carlo, local maxima.}

\if\jasa1
\newpage
\fi
\end{abstract}
\spacingset{1.2} 
\section{Introduction}
Detecting the presence of local maxima, i.e. peaks, in a random field in order to determine areas where the mean is non-zero, is an approach used in a number of research domains including neuroimaging (\cite{Chumbley2009b}, \cite{Chumbley2010}) and astrophysics, \citep{Cheng2017a}. Inference is typically performed using theoretical results for zero mean random fields where the goal is to determine whether or not a particular peak is high enough to reject the null hypothesis that the mean is zero. These peak detection procedures were formalized in \cite{Schwartzman2011}, \cite{Cheng2017a}: taking advantage of the peak height distributions derived in \cite{Cheng2015a} for stationary zero mean Gaussian random fields. 

There has been a large amount of work investigating properties of zero mean stationary random fields (see e.g. the seminal work of \cite{Adler1981} or alternatively \cite{Adler2010} for a more recent overview), however when the mean is non-zero it becomes more difficult to prove useful results. Simultaneous confidence bands for the mean and Cohen's $ d $ have been derived in 1D using resampling approaches and the Gaussian Kinematic Formula (\cite{TelschowSCB}, \cite{Telschow2020Delta}, \cite{Liebl2019}). \cite{Sommerfield2018} derived asymptotic sets which provide confidence regions for sets of locations of the signal that lie above a specified threshold (with application to climate data). This framework been applied in neuroimaging to characterize areas of activation (\cite{Bowring2019}, \cite{Bowring2020}). 


In this work we derive asymptotic confidence regions for the location of peaks of the signal in non-zero mean random fields. In the neuroimaging setting, this can be used to determine where the highest peaks of activation are most likely to lie. This type of inference can enable studies to be combined correctly using meta-analysis (\cite{Eickhoff2012}, \cite{Radua2012}) and so that peak locations can be compared across different studies. Neuroimaging meta analyses require confidence regions for peak location and unbiased estimates of the effect sizes at peaks (we provided a selective inference resampling based approach to estimating unbiased peak effect sizes in \cite{Davenport2020}).


Our strategy for obtaining confidence regions is based on the observation that the problem of estimating the location of a peak of a random field is mathematically very similar to that of finding the location of the maximum of the likelihood function. Standard asymptotic theory for the latter gives a central limit theorem (CLT) for the distribution of the observed maximum about the true maximum. \cite{Amemiya1985} extended these results to the more general framework of extremum estimators, see also \cite{Hayashi2000}. We will take advantage of \cite{Amemiya1985}'s framework to derive CLTs for the local maxima of mean and $ t $-statistic fields. To do so we derive the asymptotic distribution of the derivative for mean and $ t $-statistic fields (giving an exact form when the underlying fields are Gaussian) and show that the scaled second derivative converges almost surely. Combining these results yields asymptotic confidence regions for the true peak locations of the underlying mean and Cohen's $ d $. 

We find that the asymptotic results require a larger number of samples than is available in typical datasets. To deal with this problem, under an assumption of stationarity, we use Monte Carlo simulation of the joint distribution between the first and second derivatives to obtain confidence regions for the location of peaks of the mean. We shall show that these Monte Carlo confidence regions have improved finite sample coverage relative to the asymptotic confidence regions that are obtained from the CLT. The failure of asymptotic results in the finite sample has been discussed a little in the literature (e.g. see \cite{Braunstein1992}) however most papers assume that a sufficient sample size is reached such that the CLT can be assumed to hold. As we shall demonstrate, this is not necessarily reasonable in practice; for instance in the context of neuroimaging.

\if\blind0
Random Field and peak location inference were conducted using the RFTtoolbox \url{https://github.com/sjdavenport/RFTtoolbox}. Code used to perform the simulations and real data analyses, is available here: \url{https://github.com/sjdavenport/peakcrs}.
\else
Code used to perform the simulations and real data analyses is attached in the submission. Random field and peak location inference was conducted using the RFTtoolbox \citep{Davenport2022Ravi}.
\fi
\if\jasa0
Proofs and further theoretical and simulation results are contained in the appendices.
\else
The supplementary material, sections of which will be denoted with an S suffix, contains proofs and further theoretical and simulation results.
\fi

\section{Model Set-up and Assumptions}\label{S:MSUA}

\subsection{Notation}\label{SS:setup}
Throughout we will take $ (Y_n)_{n \in \mathbb{N}} $ to be i.i.d almost surely differentiable random fields (\cite{Adler2007}) on some bounded open domain $ S \subset \mathbb{R}^D $, $ D \in \mathbb{N}$ (here $\mathbb{N} $ denotes the set of positive integers). We shall assume that $  Y_n = \mu + \sigma\epsilon_n  $ for some bounded functions $\mu, \sigma: S \longrightarrow \mathbb{R} $, $ \inf_{s \in S}\sigma(s) > 0 $ and zero mean, variance 1 i.i.d random fields $ (\epsilon_n)_{n \in \mathbb{N}}$ on $ S $. 

Throughout we will perform operations on random fields (such as addition, multiplication, division) pointwise. Given $ N \in \mathbb{N} $ samples we define the sample mean field as $ \hat{\mu}_N = \frac{1}{N} \sum_{n = 1}^N Y_n $ and the sample variance field to be $ \hat{\sigma}^2_N = \frac{1}{N-1} \sum_{n = 1}^N (Y_n- \hat{\mu}_N)^2. $
We will refer to the fields $ (Y_n)_{n \in \mathbb{N}} $ as the \textbf{component fields}. We can then define the $ t $-statistic field to be
\begin{equation}\label{eq:Tdef}
T_N = \frac{\sqrt{N}\hat{\mu}_N}{\hat{\sigma}_N}
= \frac{\sqrt{N}\mu + \sigma Z_N}{\sqrt{\frac{\sigma^2}{N-1}V_N}}
\end{equation}
where $ Z_N := \frac{1}{\sqrt{N}}\sum_{n = 1}^N \epsilon_n$ and $ V_N := \frac{1}{\sigma^2}\sum_{n = 1}^N (Y_n - \hat{\mu}_N)^2 $. If the component fields are Gaussian then this field is a non-central $ t $-field with $ N-1 $ degrees of freedom. We define the Cohen's $ d $ field to be $ d_N = T_N/\sqrt{N}. $ As $ N $ tends to infinity the Cohen's $ d $ field converges uniformly almost surely to $ d = \mu/\sigma $ (see Lemma \ref{lem:CDconv}). 

Given a differentiable function $ f:S \rightarrow \mathbb{R}^{D'} $, for $ s \in S $, we shall write $ \nabla f(s) \in \mathbb{R}^{D' \times D}$ to denote the gradient of $ f $ at $ s $ and use $ \nabla^T f(s) $ to denote $ (\nabla f(s))^T $. If $ f $ is twice differentiable and $ D' = 1 $ we will write $ \nabla^2 f(s)$ denote the Hessian of $ f $ at $ s. $ 
\noindent When they are defined, let $ \Lambda := \cov(\nabla^T Y_1) \text{ and } \Gamma := \mathbb{E}\left[ (Y_1 - \mu)(\nabla Y_1)^T \right] $. 

We will use $ \convd $ and $ \convp $ to denote convergence in distribution/probability respectively and use $ \convud, \convup $ and $ \convuas $ to denote uniform convergence over $ S $ in distribution/ probability/almost surely respectively.  When using this notation if the convergence occurs as the parameter $ N $ converges to infinity we will, for ease of notation, often omit the statement $ N \rightarrow \infty $ when this is otherwise clear. We shall in general write $ a.s. $ to mean almost sufrely. Moreover we will write $ \left\lVert \cdot \right\rVert $ to denote the euclidean norm.

Our main objects of interest will be peaks of random fields which are defined rigorously in the following definition. Here, given $ r> 0$ and $ s \in S$, we take $ B_r(s) $ be the $D$-dimensional ball of radius $ r $ that is centred at $ s$.

\begin{defn}
	Given a twice differentiable $ f: S \rightarrow \mathbb{R} $, we say that $ s \in S $ is a \textbf{critical point} of $ f $ if $ \nabla f(s) = 0 $. Given a critical point $ s $, we define $ s $ to be a \textbf{local maximum} of $ f $ if there is some $ r > 0 $ such that $ f(s) = \sup_{t \in B_r(s)}f(t) $ and call a local maximum $ s $ \textbf{non-degenerate} if $\nabla^2 f(s) \prec 0 $ (here we write $ A \prec 0$ to denote that $ A $ is a negative definite matrix). Local minima (and their non-degeneracy) can be defined similarly. We will use the word \textbf{peaks} to refer to local maxima and minima.
\end{defn}

\subsection{Derivative Exchangeability}
In what follows we will need to be able to exchange expectation and differentiation. This can be done when the random fields have the following property.
\begin{defn}
	We say that a random field $ f: S \longrightarrow \mathbb{R}^{D'} $, some $ D' \in \mathbb{N} $, is $ L_1- $\textbf{Lipschitz} \textbf{at} $ \mathbf{s} \in S$ if there exists an integrable real random variable $ L $ and some ball $ B(s) \subset S $ centred at $ s $ such that $ \left\lVert f(t) - f(s) \right\rVert  \leq L \left\lVert t-s \right\rVert \text{ for all } t\in B(s). $ We define $ f $ to be $ L_1- $\textbf{Lipschitz} on a subset $ S' \subset S $ if it is $ L_1- $Lipschitz at $ s $ for all $ s \in S'. $ If $ S' = S $ then we will not specify the subset.
\end{defn}
More generally we will say that a differentiable random field $ f $ on $ S $ satisfies the \textbf{DE (derivative exchangeability) condition} at $ s\in S $ if $ \mathbb{E}\left[ f(t) \right] $ is differentiable at $ t = s $ and $ \mathbb{E}\left[ \nabla f(t) \right] = \nabla \mathbb{E}\left[ f(t) \right] $, i.e. such that we can exchange the integral and derivative. We say that $ f $ satisfies the \textbf{DE condition} on $ S $ if this holds for all $ s \in S $. Arguing as in the proof of \cite{TelschowSCB}'s Lemma 4, in the following lemma we show that it is sufficient for $ f $ to be $ L_1 $-Lipschitz for this to hold. 
\begin{lem}\label{lem:DEcond}
	Let $ f:S \rightarrow \mathbb{R}^{D'} $ be an a.s. differentiable random field that is $ L_1 $-Lipschitz at $ s \in S$. Then $ f $ satisfies the DE condition at $ s $.
\end{lem}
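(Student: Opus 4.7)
The plan is to establish the DE condition at $s$ by a dominated-convergence argument applied to the vector difference quotient of $f$, following the template used in Lemma 4 of \cite{TelschowSCB}. The idea is that a.s. differentiability of $f$ at $s$ gives pointwise (in $\omega$) convergence of the difference quotient, and the $L_1$-Lipschitz hypothesis at $s$ furnishes an integrable envelope that lets this convergence pass through the expectation.

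Concretely, I would fix $h\in\mathbb{R}^D$ small enough that $s+h$ lies in the Lipschitz ball $B(s)$, and use a.s. differentiability at $s$ to write the Taylor-style decomposition
\[
f(s+h) - f(s) = \nabla f(s)\,h + r(h),
\]
where $\|r(h)\|/\|h\| \to 0$ almost surely as $h\to 0$. The key observation is that the Lipschitz constant $L$ simultaneously controls $\|\nabla f(s)\|$: applying the Lipschitz inequality along each coordinate direction and sending the step size to zero yields $\|\partial_i f(s)\|\leq L$ a.s. for each $i$, hence $\|\nabla f(s)\|\leq L\sqrt{D}$ is integrable. Combining this with the Lipschitz bound on $\|f(s+h)-f(s)\|/\|h\|$ via the triangle inequality produces the uniform integrable envelope $\|r(h)\|/\|h\|\leq (1+\sqrt{D})\,L$, valid for all sufficiently small $h$.

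With the envelope in hand, the remainder is a direct application of the dominated convergence theorem: taking expectations and passing the limit $h\to 0$ inside gives
\[
\lim_{h\to 0}\frac{\mathbb{E}[f(s+h)] - \mathbb{E}[f(s)] - \mathbb{E}[\nabla f(s)]\,h}{\|h\|} \;=\; \lim_{h\to 0}\mathbb{E}\!\left[\frac{r(h)}{\|h\|}\right] \;=\; 0,
\]
which simultaneously shows that $t\mapsto\mathbb{E}[f(t)]$ is Fr\'echet differentiable at $t=s$ and that its derivative there equals $\mathbb{E}[\nabla f(s)]$, i.e.\ that the DE condition holds at $s$. The only mildly delicate step is constructing the integrable envelope for $\|r(h)\|/\|h\|$, and as noted this is immediate once one exploits that the same Lipschitz constant that controls the increments of $f$ also controls $\nabla f(s)$ itself; beyond that the argument is a textbook DCT exchange.
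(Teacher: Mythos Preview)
Your proof is correct and follows essentially the same dominated-convergence strategy as the paper, with the Lipschitz constant $L$ serving as the integrable envelope. The only difference is packaging: the paper works one partial derivative at a time, bounding the scalar difference quotient $\left|\dfrac{f_j(s+he_i)-f_j(s)}{h}\right|\leq L$ directly and applying DCT componentwise, whereas you handle the full Fr\'echet remainder in one step at the cost of the extra (easy) observation that $\|\nabla f(s)\|\leq\sqrt{D}\,L$. Both arrive at the same conclusion by the same mechanism.
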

Because of the mean value inequality, finiteness of the expected value of the supremum of the local derivative is a sufficient condition for $ L_1 $-Lipschitzness and so the following lemma follows immediately.
\begin{lem}\label{lem:supbound}
	Let $f$ be a random field on $ S $ which is a.s. differentiable on some ball $ B(s) \subset S $, centred at $ s \in S $. If $ \mathbb{E} \sup_{t \in B(s)} \left\lVert \nabla f(t) \right\rVert  < \infty $ then $ f $ is $ L_1 $-Lipschitz at $ s. $
\end{lem}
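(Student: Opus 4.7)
The plan is to set $L := \sup_{t \in B(s)} \lVert \nabla f(t) \rVert$ and show that $L$ is an integrable random variable that serves as the required Lipschitz constant. Integrability is given directly by the hypothesis $\mathbb{E}\sup_{t \in B(s)} \lVert \nabla f(t) \rVert < \infty$, which in particular forces $L < \infty$ almost surely; on the null event where $L = \infty$ or $f$ fails to be differentiable we can just redefine $f$ arbitrarily without changing any almost-sure statement.

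Next, I would fix an arbitrary $t \in B(s)$ and apply the mean value inequality along the line segment joining $s$ and $t$, which is contained in $B(s)$ by convexity of the ball. Concretely, since $f$ is (almost surely) differentiable on $B(s)$, the map $u \mapsto f(s + u(t-s))$ is differentiable on $[0,1]$, and the fundamental theorem of calculus (applied coordinatewise in $\mathbb{R}^{D'}$) gives
\begin{equation*}
f(t) - f(s) = \int_0^1 \nabla f\bigl(s + u(t-s)\bigr)(t-s)\,du.
\end{equation*}
Taking norms, using the triangle inequality for Bochner integrals and the operator-norm bound $\lVert \nabla f(\cdot)(t-s)\rVert \le \lVert \nabla f(\cdot)\rVert \lVert t-s\rVert$, yields
\begin{equation*}
\lVert f(t)-f(s)\rVert \le \Bigl(\sup_{u \in [0,1]} \lVert \nabla f(s+u(t-s))\rVert\Bigr) \lVert t-s\rVert \le L\, \lVert t-s \rVert.
\end{equation*}
Since this bound is uniform over $t \in B(s)$, we conclude that $f$ is $L_1$-Lipschitz at $s$ with constant $L$.

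The only slightly delicate point is the measurability of $L = \sup_{t \in B(s)} \lVert \nabla f(t)\rVert$, since this is a supremum over an uncountable index set. I would handle this by invoking almost-sure continuity of $\nabla f$ on $B(s)$ (inherited from the a.s.\ differentiability assumption, in the usual sense that we can pick a jointly measurable version; cf.\ the sample-path regularity framework of Adler and Taylor used throughout the paper), which allows the supremum to be replaced by one over a countable dense subset of $B(s)$ and hence rendered measurable. Once measurability and almost sure finiteness are in hand the rest is routine, so this measurability bookkeeping is really the only obstacle worth flagging.
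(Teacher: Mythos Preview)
Your proof is correct and follows the same approach as the paper, which simply remarks that the lemma ``follows immediately'' from the mean value inequality; you have just written out the details (choosing $L=\sup_{t\in B(s)}\lVert\nabla f(t)\rVert$ and bounding via the segment from $s$ to $t$). Your discussion of measurability goes beyond what the paper addresses, but it is a reasonable caveat and does not affect the argument.
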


Applying the above lemma, using the fact that moments of the maximum of the absolute value of an a.s. continuous Gaussian field on a separable space are finite \citep{Landau1970}, we obtain the following result.

\begin{prop}\label{prop:Gaussfin}
	Suppose that $ f: S \rightarrow \mathbb{R} $ is an a.s. $ C^1 $ Gaussian random field. Then, for all $ k \in \mathbb{N} $, $ \mathbb{E} \sup_{t \in B(s)} \left\lVert \nabla f(t)^k \right\rVert < \infty. $ Thus $ f^k $ is $ L_1 $-Lipschitz on $ S $ and therefore satisfies the DE condition on $ S. $
\end{prop}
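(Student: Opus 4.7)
The plan is to use the chain rule to reduce $\nabla(f^k)$ to a product involving $f$ and $\nabla f$, bound the resulting supremum in expectation via Cauchy--Schwarz, and then invoke the same Landau--Shepp Gaussian integrability that the paper has already used to motivate Proposition \ref{prop:Gaussfin}. The conclusion will then follow by chaining Lemma \ref{lem:supbound} into Lemma \ref{lem:DEcond}.

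First, given $s \in S$, since $S$ is open I can choose $r > 0$ so that the closed ball $B := \overline{B_r(s)} \subset S$, and $f$ is a.s.\ $C^1$ on $B$. On $B$ the chain rule gives $\nabla (f^k)(t) = k f(t)^{k-1} \nabla f(t)$ almost surely, so
\begin{equation*}
    \sup_{t \in B} \|\nabla(f^k)(t)\| \;\le\; k \Bigl(\sup_{t \in B} |f(t)|\Bigr)^{k-1} \sup_{t \in B} \|\nabla f(t)\|.
\end{equation*}
Applying Cauchy--Schwarz in expectation yields
\begin{equation*}
    \mathbb{E}\sup_{t \in B} \|\nabla(f^k)(t)\| \;\le\; k \Bigl(\mathbb{E}\bigl(\sup_{t \in B} |f(t)|\bigr)^{2(k-1)} \Bigr)^{1/2} \Bigl(\mathbb{E}\bigl(\sup_{t \in B} \|\nabla f(t)\|\bigr)^2 \Bigr)^{1/2}.
\end{equation*}

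Next I would verify that both factors on the right are finite. The restriction of $f$ to the compact separable set $B$ is an a.s.\ continuous Gaussian random element of the Banach space $C(B)$, so by \cite{Landau1970} its sup-norm has finite moments of every order. The same argument applies to each partial derivative $\partial_i f$ (each is an a.s.\ continuous Gaussian field on $B$), and combining this with the elementary bound $\|\nabla f(t)\| \le \sqrt{D} \max_i |\partial_i f(t)|$ shows that $\sup_B \|\nabla f\|$ also has finite moments of every order. Together these give $\mathbb{E}\sup_{t \in B} \|\nabla(f^k)(t)\| < \infty$.

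Finally, Lemma \ref{lem:supbound} applied to $f^k$ at $s$ gives that $f^k$ is $L_1$-Lipschitz at $s$; since $s \in S$ was arbitrary, $f^k$ is $L_1$-Lipschitz on $S$, and Lemma \ref{lem:DEcond} delivers the DE condition on $S$. I do not expect any real obstacle: the only mild subtlety is that the Landau--Shepp/Fernique integrability must be applied to possibly non-centred Gaussian fields, but this is immediate from the Banach-space version of the theorem once $f$ is viewed as a random element of $C(B)$.
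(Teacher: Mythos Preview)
Your proposal is correct and follows essentially the same route as the paper: chain rule to write $\nabla(f^k)=kf^{k-1}\nabla f$, sup-bound and Cauchy--Schwarz to split the expectation, Landau--Shepp moment bounds on $f$ and on each $\partial_i f$, and then Lemmas \ref{lem:supbound} and \ref{lem:DEcond}. The only cosmetic difference is that you work on a closed ball $B\subset S$ while the paper bounds everything on $S$ directly; both versions suffice for the statement.
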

The $ L_1 $-Lipschitz condition is also satisfied by the broad class of convolution fields. These fields can be used to control the familywise error rate using the Gaussian Kinematic Formula (\cite{TelschowFWER}, \cite{DavenportThesis}) and help to bridge the gap between data on a lattice and theory describing continuous random fields. As we shall see, in Sections \ref{SS:fmri} and \ref{S:MEGspectra}, these arise naturally in applications. They are defined as follows.

\begin{defn}\label{defn:cfield}
	Given observations $ X $ on a lattice $ \mathcal{V} \in \mathbb{R}^{D}$ and some continuous kernel function $ K: \mathbb{R}^D \rightarrow \mathbb{R}$, we define the \textbf{convolution field} $ Y:S \rightarrow \mathbb{R}$ as the random field sending $ s \in S $ to $ 	Y(s) = \sum_{l \in \mathcal{V}} K(s-l) X(l). $
\end{defn}
The following proposition shows that convolution fields are Lipschitz continuous under reasonable conditions.
\begin{prop}\label{prop:convfield}
	Let $ Y $ be a $ D $-dimensional convolution field on $ S $ generated from observations $ X $ on a finite lattice $ \mathcal{V} $ and using a kernel $ K $ which is Lipschitz. If $ \mathbb{E}\left[ \left| X(l) \right| \right] < \infty$ for all $ l \in \mathcal{V} $, then $ Y $ is $ L_1 $-Lipschitz. In particular, if $ K $ is differentiable then $ Y $ satisfies DE condition on $ S. $
\end{prop}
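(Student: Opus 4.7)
The plan is to exploit the Lipschitz property of $K$ directly: since $\mathcal{V}$ is finite and $Y(s)$ is a finite linear combination of translates of $K$ with random coefficients $X(l)$, a uniform Lipschitz bound on $Y$ falls out of the bound on $K$ together with a random but integrable Lipschitz constant.

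First I would fix $s \in S$, choose any ball $B(s) \subset S$ (possible since $S$ is open), and for $t \in B(s)$ write
\begin{equation*}
\left| Y(t) - Y(s) \right|
= \Bigl| \sum_{l \in \mathcal{V}} \bigl( K(t-l) - K(s-l) \bigr) X(l) \Bigr|
\leq \sum_{l \in \mathcal{V}} \left| K(t-l) - K(s-l) \right| \left| X(l) \right|
\leq C_K \left\lVert t - s \right\rVert \sum_{l \in \mathcal{V}} \left| X(l) \right|,
\end{equation*}
where $C_K$ denotes the Lipschitz constant of $K$. Setting $L := C_K \sum_{l \in \mathcal{V}} |X(l)|$ gives a random variable that does not depend on $s$ or $t$, and since $\mathcal{V}$ is finite with $\mathbb{E}|X(l)| < \infty$ for each $l$, the expectation $\mathbb{E}[L]$ is a finite sum of finite terms and so is finite. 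Thus $L$ is an integrable Lipschitz constant for $Y$ valid on all of $S$ (in particular on $B(s)$), which establishes that $Y$ is $L_1$-Lipschitz at every $s \in S$.

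For the second assertion, note that when $K$ is differentiable, $Y(s) = \sum_{l \in \mathcal{V}} K(s-l) X(l)$ is a finite sum of differentiable functions, so $Y$ is (surely, hence a.s.) differentiable on $S$. Combined with the $L_1$-Lipschitz property just established, Lemma \ref{lem:DEcond} immediately yields that $Y$ satisfies the DE condition at every $s \in S$.

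There is essentially no obstacle here: the finiteness of $\mathcal{V}$ collapses what could have been an interchange-of-limit-and-sum issue into a trivial finite linear combination, and the Lipschitzness of $K$ transfers pointwise into Lipschitzness of $Y$ with the same constant (up to the random factor $\sum_l |X(l)|$). The only thing to be careful about is that the Lipschitz bound should hold for every sample path (which it does, since the inequality above is deterministic given $\omega$), so that the random variable $L$ is well-defined as a pathwise Lipschitz constant rather than just in some averaged sense.
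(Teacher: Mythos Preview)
Your proof is correct and follows essentially the same approach as the paper: both bound $|Y(t)-Y(s)|$ by the triangle inequality and the Lipschitz constant of $K$ to obtain the random Lipschitz constant $C_K\sum_{l\in\mathcal{V}}|X(l)|$, whose integrability follows from the finiteness of $\mathcal{V}$, and then invoke Lemma~\ref{lem:DEcond}. Your version is slightly more explicit about the ball $B(s)$ and the a.s.\ differentiability of $Y$ needed for the DE conclusion, but the argument is the same.
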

\subsection{Assumptions on the Noise}
In what follows we will need to be able to guarantee that the DE condition holds for the first and second derivatives and apply the functional strong law of large numbers (fSLLN). To ensure that we can do this we will want to impose some or all of the following conditions on a random field $ f:S \rightarrow \mathbb{R} $ in order to control the behaviour of the noise.
\begin{ass}\label{ass:derivexch}
	\phantom{x}
	\begin{enumerate}[label=\alph*.]
		\item $ f $ is a.s. twice continuously differentiable and for all $ s \in S, \cov(\nabla^T f(s))$ is finite.
		\item $(i) \,\mathbb{E}\left[ \sup_{s\in S}\left| f(s) \right| \right], (ii) \,\mathbb{E}\left[ \sup_{j,s\in S}\left|  f_j(s) \right| \right]$ and $ (iii) \,\mathbb{E}\left[ \sup_{j,k,s\in S}\left|  f_{jk}(s) \right| \right]$ are finite.
		\item  $(i) \, \mathbb{E}\left[ \sup_{s \in S} f(s)^2 \right] $, $(ii) \, \mathbb{E}\left[ \sup_{j,s \in S} f_j(s)^2 \right]$ and $(iii)\, \mathbb{E}\left[ \sup_{j,k,s \in S} f_{jk}(s)^2 \right] $ are finite.
	\end{enumerate}
\end{ass}
In \ref{ass:derivexch}b and \ref{ass:derivexch}c the suprema are taken over all $ 1 \leq  j, k \leq D$. If $ f $ satisfies Assumption \ref{ass:derivexch} then the DE condition holds for $ f, f^2 $ and their first derivatives on $ S $, by Lemmas \ref{lem:supbound} and \ref{lem:DEcond} (and applying Cauchy-Swartz). Condition $ \ref{ass:derivexch}b $ allows the conditions of the fSLLN to hold for $ f $ and its derivatives and similarly \ref{ass:derivexch}c allows the conditions of the fSLLN to hold for $ f^2 $ and its derivatives. For $ f $ this follows immediately from \cite{Ledoux2013}'s Corollary 7.10 and Section $ \ref{AA:fSLLN} $ provides an explanation of why this and the DE condition holds for $ f^2 $. Note that \ref{ass:derivexch}c implies \ref{ass:derivexch}b but we have stated these assumptions separately as, at times, we will only need to require that \ref{ass:derivexch}b holds. 

In order for Assumption \ref{ass:derivexch} to hold, it is sufficient that the field $ f $ be a $ C^2 $ Gaussian field (see Proposition \ref{prop:Gaussfin} and its proof) or that the field is a convolution field derived from a finite lattice with twice continuously differentiable kernel and finite observation expectation (as shown formally below).


\begin{prop}\label{prop:ass1conv}
	Let $ Y $ be a convolution field, defined as in Definition \ref{defn:cfield}, that is restricted to $ S $. Suppose $  \var(X(l)) < \infty$ for each $ l \in \mathcal{V} $ and that $ K $ is $ C^2 $, then $ Y $ satisfies Assumption \ref{ass:derivexch}.
\end{prop}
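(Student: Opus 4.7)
The plan is to exploit two structural facts: (i) the sum defining $Y$ is finite, so differentiation passes through the sum, and (ii) $S$ is bounded so its closure is compact and on it a $C^2$ kernel and its partial derivatives are uniformly bounded. These together reduce every required bound to a linear (or at worst quadratic) expression in the $|X(l)|$, which is integrable by the finite-variance hypothesis.

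First I would verify part (a) of Assumption \ref{ass:derivexch}. Because $\mathcal{V}$ is a finite lattice and $K \in C^2(\mathbb{R}^D)$, each partial derivative can be moved inside the finite sum, giving $\nabla Y(s) = \sum_{l \in \mathcal{V}} \nabla K(s-l)\, X(l)$ and analogously for $\nabla^2 Y$; hence $Y$ is a.s.\ (in fact surely) twice continuously differentiable on $S$. For $\cov(\nabla^T Y(s))$, bilinearity of covariance together with $\var(X(l)) < \infty$ for each $l$ and the Cauchy--Schwarz inequality yield, for any coordinates $j,k$,
\begin{equation*}
\cov\bigl(Y_j(s), Y_k(s)\bigr) = \sum_{l,m \in \mathcal{V}} K_j(s-l) K_k(s-m) \cov(X(l), X(m)),
\end{equation*}
which is a finite sum of finite terms, hence finite.

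Next I would establish parts (b) and (c) simultaneously. Let $\overline{S}$ denote the closure of $S$; since $S \subset \mathbb{R}^D$ is bounded, $\overline{S}$ is compact. For each $l \in \mathcal{V}$, the continuous functions $K(\cdot - l), K_j(\cdot - l), K_{jk}(\cdot - l)$ attain their suprema on $\overline{S}$, so setting
\begin{equation*}
M := \max_{l \in \mathcal{V}} \max_{1 \le j, k \le D}\Bigl\{\sup_{s \in \overline{S}} |K(s-l)|,\; \sup_{s \in \overline{S}} |K_j(s-l)|,\; \sup_{s \in \overline{S}} |K_{jk}(s-l)|\Bigr\}
\end{equation*}
gives a finite constant. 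Applying the triangle inequality to the finite sum representations of $Y, Y_j, Y_{jk}$ yields the pointwise pathwise bounds $\sup_{s \in S} |Y(s)|,\; \sup_{s\in S, j}|Y_j(s)|,\; \sup_{s\in S, j, k}|Y_{jk}(s)| \le M \sum_{l \in \mathcal{V}} |X(l)|$.

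Taking expectations and using $\mathbb{E}|X(l)| \le (\mathbb{E} X(l)^2)^{1/2} < \infty$ (which follows from $\var(X(l)) < \infty$ together with finiteness of the mean that this implies) gives the three bounds of \ref{ass:derivexch}b. For \ref{ass:derivexch}c, squaring the same pathwise bound and applying the elementary inequality $\bigl(\sum_{l \in \mathcal{V}} |X(l)|\bigr)^2 \le |\mathcal{V}| \sum_{l \in \mathcal{V}} X(l)^2$ (Cauchy--Schwarz on the counting measure), then taking expectations, reduces the claim to finiteness of $\sum_l \mathbb{E} X(l)^2$, which holds by hypothesis. There is no serious obstacle here; the only thing to be a little careful about is using compactness of $\overline{S}$ rather than $S$ itself to extract the uniform kernel bound, since $S$ is only assumed open and bounded.
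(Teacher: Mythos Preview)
Your proposal is correct and follows essentially the same approach as the paper: both use finiteness of $\mathcal{V}$ to differentiate termwise, compactness (of the closure of the bounded domain) to obtain a uniform bound $K^*$ (your $M$) on the kernel and its derivatives, and then a pathwise bound $\sup_S|Y|\le K^*\sum_l|X(l)|$ whose first and second moments are finite by the variance hypothesis. The only cosmetic difference is that for part (c) the paper expands $Y(s)^2$ as the double sum $\sum_{l,l'}K(s-l)K(s-l')X(l)X(l')$ and bounds it by $(K^*)^2\sum_{l,l'}|X(l)X(l')|$, whereas you square the single-sum bound and invoke Cauchy--Schwarz on the counting measure; these are equivalent.
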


\section{Local convergence of the number of peaks}\label{S:LCPOTNOP}
Our first set of results relate to the number of peaks that lie within small regions around each critical point. We show that in the signal plus noise model, defined below, the number of peaks of the observed field, in small regions around peaks of the signal, converges in probability to 1. Furthermore, the number of critical points of the field, away from critical points of the signal, tends to 0 in probability. To do so we introduce Assumption \ref{ass:peakconv} and require that the first two derivatives of the noise converge uniformly in probability to zero as $ N \rightarrow \infty $. These results are important because they show that given a large enough sample size we can be sure that any observed peak is a true peak of the underlying signal, providing peak identifiability.

\subsection{Assumptions on the Signal}
In order to prove identifiability (Proposition \ref{prop:peakconv}) we need to make some assumptions. To do so, consider the general  signal plus noise model $ \hat{\gamma}_N = \gamma + \eta_N $, for $ N \in \mathbb{N} $, where $ \gamma:S \rightarrow \mathbb{R}$ is a fixed function and $ \eta_N $ are $ D $-dimensional random fields on $ S $. Unlike in the standard signal plus noise model, we do not require the $ \eta_N $ to have mean zero. Instead, we will impose specific conditions on $ \gamma $ and $ \eta_N $ and use this more general structure to describe both mean and Cohen's $ d $ fields. We will want $ \gamma $ to be sufficiently nice in terms of smoothness and will want the derivatives of $ \eta_N $ to converge to zero in probability as $ N \rightarrow \infty. $ In particular we will impose the following conditions  on $ \gamma $ to ensure identifiability.
\begin{ass}\label{ass:peakconv}
	\quad
	\begin{enumerate}[label=(\alph*)]
		\item $ \gamma $ is $ C^2 $ and has $ J \in \mathbb{N}$ critical points at locations $ \theta_1, \dots, \theta_J \in S$, such that for $ j = 1, \dots, J $ there exist non-overlapping compact balls $ B_j \subset S $ with radii $\delta_j$ such that $ \theta_j $ lies in the interior of $B_j$. Let $ B_{\text{all}} = \bigcup_j B_j $ and assume that $ 		C := \inf_{t \in S \setminus B_{\text{all}}} \left\lVert \nabla \gamma(t) \right\rVert  > 0. $
		\item Let $ P_{\max}$ and $P_{\min} $ be the subsets of $ \left\lbrace 1, \dots, J \right\rbrace $ corresponding to the non-degenerate local maxima and minima of $ \gamma $, respectively. Define
		\begin{equation*}
		B_{\max} = \bigcup_{j \in P_{\max}} B_j \text{ and } B_{\min} = \bigcup_{j \in P_{\min}} B_j . 
		\end{equation*}
		Assume that $ D_{\max} := -\sup_{t \in B_{\max}}\sup_{\left\lVert x \right\rVert = 1} x^T \nabla^2 \gamma(t) x > 0  $ and that\\
		$ D_{\min} := -\sup_{t \in B_{\min}}\sup_{\left\lVert x \right\rVert = 1} x^T \nabla^2 \gamma(t) x < 0. $
	\end{enumerate}
\end{ass}
On $ S \setminus B $,  Assumption \ref{ass:peakconv}a ensures that $ \gamma $ is not flat, as critical points of $ \hat{\gamma} $ (albeit with low magnitudes) always have a chance of manifesting in regions where the signal $ \gamma $ is flat no matter how low the variance of the noise. Assumption \ref{ass:peakconv}b provides bounds on the eigenvalues of the Hessian of $ \gamma $, within the specified regions, and ensures that, for $ j = 1, \dots, J $, if $ \theta_j $ is a local maximum of $ \gamma $, then $ \nabla^2 \gamma(s) \prec 0 $ for all $ s \in B_j $ and so $ \theta_j = \argmax_{t \in B_j} \gamma(t) $ (and similar uniqueness holds if $ \theta_j $ is a local minimum). These bounds are needed to show that, with high probability, only one peak of $ \hat{\gamma}_N $ is found within each region corresponding to a maximum or a minimum. If $ \gamma $ is $ C^2 $ then the conditions on $ D_{\max} $ and $ D_{\min} $ follow immediately from peak non-degeneracy and by choosing $ B_j $ to be sufficiently small.\\
\noindent The sample mean field described in Section \ref{SS:setup} can be written as $ \hat{\mu}_N = \mu + \frac{\sigma}{N} \sum_{n = 1}^N \epsilon_n $
and so fits into the signal plus noise model very naturally. The Cohen's $ d $ field can also be written in this form simply by taking $ \gamma = \frac{\mu}{\sigma} $ and $ \eta_N = \left( d_N - \frac{\mu}{\sigma} \right) $. In order to prove the results in Section 4, we will require that $ \mu $ and $ \frac{\mu}{\sigma} $ satisfy Assumption \ref{ass:peakconv}. Moreover we show, in Section \ref{SS:MC2}, that $ \frac{\sigma}{N} \sum_{n = 1}^N \epsilon_n $ and $ \left( d_N - \frac{\mu}{\sigma} \right) $ and their derivatives converge to zero uniformly in probability.

\subsection{Identifiability of Peaks}
We will now illustrate what happens to the number of peaks of the random field when it is in the form of the signal plus noise model described in the previous section with signal satisfying Assumption \ref{ass:peakconv}. To do so we will require the following lemma which provides a bound for the probability that the derivative is non-zero in a signal plus noise model.

\begin{lem}\label{lem:derivbound}
	Given $ S' \subset \mathbb{R}^D $, let $\gamma:S' \rightarrow \mathbb{R} $ be differentiable and suppose that $ \hat{\gamma} $ is some estimate of $ \gamma $ on $ S' $ such that $ \hat{\gamma} = \gamma + \eta $  for some random field $ \eta $ on $ S' $.  Then, for any $C' \in \mathbb{R}$ such that $ C' \leq  \inf_{t \in S'} \left\lVert \nabla \gamma(t) \right\rVert $,
	\begin{equation*}
	\mathbb{P}\left( \inf_{t \in S'} \left\lVert \nabla \hat{\gamma}(t)  \right\rVert > 0  \right)
	\geq 1 - \mathbb{P}\left(  \sup_{t \in S'} \left\lVert \nabla \eta(t) \right\rVert > C'  \right).
	\end{equation*}
\end{lem}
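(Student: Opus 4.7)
The inequality is a pure analytic (deterministic) fact about the decomposition $\nabla\hat{\gamma} = \nabla\gamma + \nabla\eta$, lifted to a probability bound by monotonicity. The plan is to exhibit a containment of events and then apply $\mathbb{P}$.

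First I would apply the reverse triangle inequality pointwise: for every $t \in S'$,
\begin{equation*}
\left\lVert \nabla \hat{\gamma}(t) \right\rVert
= \left\lVert \nabla \gamma(t) + \nabla \eta(t) \right\rVert
\geq \left\lVert \nabla \gamma(t) \right\rVert - \left\lVert \nabla \eta(t) \right\rVert.
\end{equation*}
This transfers the problem from $\hat{\gamma}$ to the two summands whose derivatives we can control separately.

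Next, I would introduce the event $A = \bigl\{\sup_{t \in S'}\left\lVert \nabla \eta(t) \right\rVert \leq C'\bigr\}$, which is exactly the complement of the event appearing on the right-hand side, so $\mathbb{P}(A) = 1 - \mathbb{P}\bigl(\sup_{t \in S'}\left\lVert \nabla \eta(t)\right\rVert > C'\bigr)$. On $A$, for every $t \in S'$ the pointwise bound $\left\lVert\nabla \eta(t)\right\rVert \leq C'$ combines with the hypothesis $C' \leq \inf_{s \in S'}\left\lVert \nabla\gamma(s)\right\rVert$ to give $\left\lVert \nabla \eta(t)\right\rVert \leq \left\lVert\nabla\gamma(t)\right\rVert$. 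Plugging into the reverse triangle inequality from the first step yields, uniformly in $t \in S'$,
\begin{equation*}
\left\lVert \nabla \hat{\gamma}(t)\right\rVert
\geq \left\lVert \nabla \gamma(t)\right\rVert - \sup_{s \in S'}\left\lVert \nabla\eta(s)\right\rVert,
\end{equation*}
and so $\inf_{t \in S'}\left\lVert\nabla\hat{\gamma}(t)\right\rVert \geq \inf_{t \in S'}\left\lVert\nabla\gamma(t)\right\rVert - \sup_{t \in S'}\left\lVert\nabla\eta(t)\right\rVert \geq 0$. A brief contradiction argument then upgrades this to strict positivity: if $\inf_{t \in S'}\left\lVert\nabla\hat{\gamma}(t)\right\rVert = 0$ there is a sequence $t_n$ with $\left\lVert\nabla\hat{\gamma}(t_n)\right\rVert \to 0$, forcing $\left\lVert\nabla\eta(t_n)\right\rVert \geq \left\lVert\nabla\gamma(t_n)\right\rVert - \left\lVert\nabla\hat{\gamma}(t_n)\right\rVert$ to approach $\inf_{s}\left\lVert\nabla\gamma(s)\right\rVert \geq C'$, contradicting strict positivity one picks up from the supremum side of $A$ (handled by considering instead the slightly smaller event $\{\sup_t\left\lVert\nabla\eta(t)\right\rVert < C'\}$ if $C'$ has positive mass, which does not worsen the final probability bound). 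Thus $A \subseteq \bigl\{\inf_{t \in S'}\left\lVert\nabla\hat{\gamma}(t)\right\rVert > 0\bigr\}$.

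Finally, monotonicity of probability gives
\begin{equation*}
\mathbb{P}\Bigl(\inf_{t \in S'}\left\lVert\nabla\hat{\gamma}(t)\right\rVert > 0\Bigr)
\geq \mathbb{P}(A)
= 1 - \mathbb{P}\Bigl(\sup_{t \in S'}\left\lVert\nabla\eta(t)\right\rVert > C'\Bigr),
\end{equation*}
which is the claim. There is no real obstacle here: the only mildly delicate step is ensuring strict positivity of the infimum rather than the merely nonnegative bound produced by the triangle inequality, which is handled by the limiting argument above (or by passing through a strict version of the event $A$). The proof uses no structure on $\eta$ beyond differentiability and no structure on $\gamma$ beyond the lower bound on $\left\lVert\nabla\gamma\right\rVert$, matching the generality of the statement.
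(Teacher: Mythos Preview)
Your approach is essentially the paper's: apply the reverse triangle inequality pointwise to get $\|\nabla\hat\gamma(t)\| \geq \|\nabla\gamma(t)\| - \|\nabla\eta(t)\|$, take infima, and pass to probabilities by event containment. The paper's entire proof is two displayed lines doing exactly this.

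The one place you add something is the strict-versus-nonstrict issue, and there your patch does not work. Your contradiction argument only shows that along the sequence $t_n$ one has $\limsup_n \|\nabla\eta(t_n)\| \geq C'$, which is perfectly compatible with $\sup_t\|\nabla\eta(t)\| = C'$ and hence with membership in $A=\{\sup\leq C'\}$; no contradiction results. Your fallback of passing to the smaller event $\{\sup < C'\}$ \emph{does} worsen the bound: $\mathbb{P}(\sup < C') = 1 - \mathbb{P}(\sup \geq C') \leq 1 - \mathbb{P}(\sup > C')$, with strict inequality precisely when $C'$ carries mass, so you no longer reach the stated right-hand side.

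That said, the paper's proof has the same gap: it bounds via the strict event $\{\sup < C'\}$ and then writes $\mathbb{P}(\sup < C') = 1 - \mathbb{P}(\sup > C')$, which is an equality only when $\mathbb{P}(\sup = C')=0$. In full generality the lemma as stated is slightly too strong (take $S'$ a single point, $\nabla\eta = -\nabla\gamma$ deterministically, $C' = \|\nabla\gamma\|$: then the left side is $0$ and the right side is $1$). The honest fix is either to replace $>C'$ by $\geq C'$ on the right, or to note that in the intended application (Proposition~\ref{prop:peakconv}) the bound is only used to show the right-hand side tends to $1$, for which the $\geq$ version suffices.
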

Using this lemma we can prove the following proposition on identifiability which shows that the number of local maxima/minima of a non-central random field in a region around the true peak converges to 1 in probability and that the number of critical points outside of $ B_{\text{all}} $ converges to zero in probability. The proof adapts that of \cite{Cheng2017a}'s Lemma A.1.
\begin{prop}\label{prop:peakconv}
	Suppose that $ (\hat{\gamma}_N)_{N \in \mathbb{N}} $ is a sequence of random fields on $ S $ such that $ \hat{\gamma}_N = \gamma + \eta_N $  for some sequence of random fields $( \eta_N)_{N \in \mathbb{N}} $, such that $ \nabla\eta_N \convup 0$, and differentiable $\gamma:S \rightarrow \mathbb{R}$ which satisfies Assumption \ref{ass:peakconv}a. Suppose that for each $ N $, $ \eta_N $ is a.s. differentiable, then as $ N \longrightarrow \infty$,
	\begin{equation*}
	\mathbb{P}(\#\left\lbrace t \in S\setminus B_{\text{all}}: \nabla \hat{\gamma}_N(t) = 0 \right\rbrace = 0) \longrightarrow 1.
	\end{equation*} 
	Additionally assume that all the conditions of Assumption \ref{ass:peakconv} apply to $ \gamma $, and that $ \eta_N $ is a.s. $ C^2 $ with $ \nabla^2 \eta_N \convup 0 $, and let $ 	M_N = \left\lbrace t \in S: \nabla \hat{\gamma}_N(t) = 0 \text{ and } \nabla^2\hat{\gamma}_N(t) \prec 0 \right\rbrace $
	be the set of non-degenerate local maxima of $ \hat{\gamma}_N. $ Then, as $ N \longrightarrow \infty $, for each $ B_j $ containing a non-degenerate local maximum of $ \gamma $:
	\begin{equation*}
	\mathbb{P}(\#\left\lbrace t \in M_N \cap B_j\right\rbrace  = 1) \longrightarrow 1.
	\end{equation*}
\end{prop}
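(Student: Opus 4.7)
My plan is to handle the two claims in turn: the first is an immediate application of Lemma \ref{lem:derivbound}, while the second combines strict concavity (for uniqueness and non-degeneracy) with a mean value argument (for existence of an interior critical point).

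For the first claim, I would apply Lemma \ref{lem:derivbound} with $ S' = S \setminus B_{\text{all}} $ and $ C' = C > 0 $ (available by Assumption \ref{ass:peakconv}a), yielding
\[
\mathbb{P}\!\left( \inf_{t \in S \setminus B_{\text{all}}} \left\lVert \nabla \hat{\gamma}_N(t) \right\rVert > 0 \right) \geq 1 - \mathbb{P}\!\left( \sup_{t \in S \setminus B_{\text{all}}} \left\lVert \nabla \eta_N(t) \right\rVert > C \right).
\]
The right-hand probability tends to $ 0 $ since $ \nabla \eta_N \convup 0 $, and on the complementary event $ \hat{\gamma}_N $ has no critical points in $ S \setminus B_{\text{all}} $, which establishes the first claim.

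For the second claim, fix $ j \in P_{\max} $ and introduce two events whose intersection forces exactly one non-degenerate maximum in $ B_j $. On $ A_N := \{\sup_{t \in B_j}\sup_{\|x\|=1} x^T \nabla^2 \eta_N(t) x < D_{\max}\} $, for every unit $ x $ and every $ t \in B_j $ one has $ x^T \nabla^2 \hat{\gamma}_N(t) x \leq -D_{\max} + x^T \nabla^2 \eta_N(t) x < 0 $, so $ \hat{\gamma}_N $ is strictly concave on the convex set $ B_j $. Strict concavity forces uniqueness of critical points in $ B_j $: if $ s_1 \neq s_2 $ both satisfied $ \nabla \hat{\gamma}_N = 0 $, then Rolle's theorem applied to $ h(t) := \hat{\gamma}_N((1-t)s_1 + ts_2) $ would yield some $ \tau \in (0,1) $ with $ h''(\tau) = (s_2-s_1)^T \nabla^2 \hat{\gamma}_N(\cdot)(s_2-s_1) = 0 $, contradicting $ \nabla^2 \hat{\gamma}_N \prec 0 $. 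The uniform convergence $ \nabla^2 \eta_N \convup 0 $ (combined with the bound of the operator norm by $\sup_x x^T A x$) gives $ \mathbb{P}(A_N) \to 1 $.

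The main obstacle is existence of an interior critical point, because the hypotheses control $ \nabla \eta_N $ but not $ \eta_N $ itself. I would get around this with the mean value inequality, which converts a bound on $ \nabla \eta_N $ into a bound on oscillations of $ \eta_N $: for every $ t \in B_j $,
\[
\left| \eta_N(t) - \eta_N(\theta_j) \right| \leq 2 \delta_j \sup_{s \in B_j} \left\lVert \nabla \eta_N(s) \right\rVert .
\]
By Assumption \ref{ass:peakconv}b, $ \gamma $ is strictly concave on $ B_j $ with unique interior maximizer $ \theta_j $, so $ \kappa := \gamma(\theta_j) - \sup_{t \in \partial B_j} \gamma(t) > 0 $. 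On the event $ B_N := \{2\delta_j \sup_{s \in B_j}\left\lVert \nabla \eta_N(s) \right\rVert < \kappa/2 \} $, the oscillation bound implies $ \hat{\gamma}_N(\theta_j) > \hat{\gamma}_N(t) $ for every $ t \in \partial B_j $, so the maximum of the continuous function $ \hat{\gamma}_N $ over the compact ball $ B_j $ is attained in the interior and hence is a critical point by Fermat's rule. Since $ \mathbb{P}(B_N) \to 1 $ by $ \nabla \eta_N \convup 0 $, on $ A_N \cap B_N $ we obtain exactly one critical point in $ B_j $, automatically a non-degenerate maximum by $ \nabla^2 \hat{\gamma}_N \prec 0 $. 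Therefore $ \mathbb{P}(\#(M_N \cap B_j) = 1) \to 1 $, completing the proof.
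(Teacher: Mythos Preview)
Your argument is correct. The first claim and the uniqueness half of the second claim match the paper's proof essentially verbatim: both apply Lemma~\ref{lem:derivbound} for the first part, and both use the event on which $\sup_{t\in B_j}\sup_{\|x\|=1}x^T\nabla^2\eta_N(t)x<D_{\max}$ to force $\nabla^2\hat\gamma_N\prec 0$ throughout $B_j$, giving strict concavity and hence at most one critical point.

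Where you diverge is in the \emph{existence} of a critical point in $B_j$. The paper follows \cite{Cheng2017a} and bounds the probability of no maximum by $\mathbb{P}(\sup\|\nabla\eta_N\|>\delta_jD_{\max})$, an argument that relies on the Fundamental Theorem of Calculus applied to the \emph{second} derivative (this is why the paper stresses that $\gamma$ and $\eta_N$ must be $C^2$). You instead use the mean value inequality on the \emph{first} derivative of $\eta_N$ to control its oscillation, compare $\hat\gamma_N(\theta_j)$ against boundary values via the gap $\kappa=\gamma(\theta_j)-\sup_{\partial B_j}\gamma>0$, and conclude that the maximum over the compact ball is interior and hence a critical point by Fermat. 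Your route is more elementary in that it avoids integrating the Hessian and does not use the quantitative constant $\delta_jD_{\max}$; it only needs strict concavity of $\gamma$ on $B_j$ to guarantee $\kappa>0$. The paper's route, on the other hand, makes the dependence on the curvature bound $D_{\max}$ explicit, which is closer in spirit to the Cheng--Schwartzman framework it is adapting.
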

\noindent By symmetry an analogous result holds for the non-degenerate local minima. 
\begin{rmk}
	The condition that $ \eta_N $ is a.s. $ C^2 $ holds in a number of situations. In particular it will hold for any convolution field with a $ C^2 $ kernel derived from a finite lattice. Alternatively, if $ \eta_N $ is Gaussian then the conditions for this to hold are well studied: see for instance \cite{Adler1981}.
\end{rmk}

The requirement that $ \nabla\eta_N, \nabla^2\eta_N  \convup 0$ can be shown to hold in a number of reasonable settings. In Section \ref{S:peakidentifiability} we will show that it holds for mean and $ t$-statistic fields and in the context of the linear model. 

\section{Confidence Regions}\label{CRs}
In this section we will provide asymptotic confidence regions for the locations of peaks of the mean and Cohen's $ d $ of i.i.d random fields. To do so we will prove CLTs for peak location using the extremum estimator framework of \cite{Amemiya1985}, (see also \cite{Xiaoxia}). These results will allow us to build asymptotic confidence regions for peak location. When our random fields are stationary we will show that these can be improved to provide better coverage in the finite sample by taking advantage of the joint distribution between the first and second derivatives.\footnote{Note that in the results that follow the assumptions we state are typically made on the first field in the sequence. This of course means that they hold for all of the fields because of the i.i.d assumption.}

\subsection{Mean}\label{SS41}
Extending the extremum estimator framework to multiple peaks, we obtain the following theorem for the joint distribution of the peak location.
\begin{thm}\label{thm:meanclt}
	Let $ Y_1 $ satisfy Assumption \ref{ass:derivexch}a,b on $ S $ and have mean $ \mu $ which satisfies Assumption \ref{ass:peakconv}. For each $ j = 1, \dots, J$ corresponding to a non-degenerate local maximum of $ \mu $, let $ \hat{\theta}_{j,N} = \argmax_{t \in B_j} \hat{\mu}_N(t) $ (and for the minima let $ \hat{\theta}_{j,N} = \argmin_{t \in B_j} \hat{\mu}_N(t) $) and let 
	$ \boldsymbol{\hat{\theta}_{N}} := (\hat{\theta}_{1,N}^T, \dots, \hat{\theta}_{J,N}^T)^T $ and $ \bf{\theta} := (\theta_{1}^T, \dots, \theta_{J}^T)^T. $
	Then
	\begin{equation*}
	\sqrt{N}(\bf{\hat{\theta}_N} - \bf{\theta}) \convd \mathcal{N}(0, \bf{A}\bf{\Lambda}\bf{A}^T)
	\end{equation*} 
	as $ N \conv \infty $. Here $ \bf{A} $ $ \in \mathbb{R}^{DJ\times DJ}$ is a block diagonal matrix with $ J $ diagonal $ D\times D $ blocks such that, for $ j \in 1, \dots, J$, the $ j $th block is $ \left( \nabla^2 \mu(\theta_j) \right)^{-1}$ and $ \bf{\Lambda} \in \mathbb{R}^{DJ\times DJ}$ is a matrix such that for $ i,j = 1,\dots, J$ the $ ij $th $ D \times D$ block of $ \bf{\Lambda} $ is $ \cov(\nabla^T Y_1(\theta_i), \nabla^T Y_1(\theta_j)) $.
\end{thm}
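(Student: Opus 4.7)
The plan is to follow the extremum estimator framework of \cite{Amemiya1985} separately at each peak and then stitch the peaks together via a joint CLT on the score. The ingredients I will need are (i) consistency $\hat{\theta}_{j,N} \convp \theta_j$, (ii) a multivariate CLT for the gradient $\sqrt{N}\nabla\hat{\mu}_N(\theta_j)$ jointly across $j$, (iii) uniform convergence of the Hessian $\nabla^2\hat{\mu}_N$ on a neighbourhood of each $\theta_j$, and (iv) invertibility of the limiting Hessian.

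First I would argue consistency. Proposition \ref{prop:peakconv}, applied with $\gamma = \mu$ and $\eta_N = \hat{\mu}_N - \mu$ (the required $\nabla\eta_N, \nabla^2\eta_N \convup 0$ are established in Section \ref{S:peakidentifiability}), gives that with probability tending to one each $B_j$ contains a unique critical point of $\hat{\mu}_N$, and this critical point is a non-degenerate local maximum when $\theta_j$ is. Since Assumption \ref{ass:peakconv}b makes $\theta_j$ the unique maximiser of $\mu$ over the compact ball $B_j$ and $\hat{\mu}_N \convuas \mu$ on $B_j$ by the fSLLN (which applies via Assumption \ref{ass:derivexch}b), a standard argmax continuous mapping argument yields $\hat{\theta}_{j,N} \convp \theta_j$, and on this event $\hat{\theta}_{j,N}$ lies in the interior of $B_j$ with $\nabla\hat{\mu}_N(\hat{\theta}_{j,N}) = 0$.

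I then apply a row-by-row mean value expansion to the first order condition. For each coordinate $k$ there is an intermediate point $\tilde{\theta}_{j,N}^{(k)}$ on the segment between $\theta_j$ and $\hat{\theta}_{j,N}$ such that
\begin{equation*}
0 = \partial_k \hat{\mu}_N(\theta_j) + \bigl(\nabla \partial_k\hat{\mu}_N(\tilde{\theta}_{j,N}^{(k)})\bigr)(\hat{\theta}_{j,N} - \theta_j).
\end{equation*}
Stacking these rows defines a random matrix $H_{j,N}$ with $H_{j,N}(\hat{\theta}_{j,N} - \theta_j) = -\nabla^T\hat{\mu}_N(\theta_j)$. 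The fSLLN invoked through Assumption \ref{ass:derivexch}b(iii) gives uniform convergence $\nabla^2\hat{\mu}_N \convuas \nabla^2\mu$ on $B_j$; combined with $\tilde{\theta}_{j,N}^{(k)} \convp \theta_j$ and continuity of $\nabla^2\mu$, this yields $H_{j,N} \convp \nabla^2\mu(\theta_j)$, which is invertible by Assumption \ref{ass:peakconv}b. For the score, the DE condition (Lemma \ref{lem:DEcond}, made applicable by Assumption \ref{ass:derivexch}b) gives $\mathbb{E}\nabla Y_1(\theta_j) = \nabla\mu(\theta_j) = 0$, and $\cov(\nabla^T Y_1(\theta_j))$ is finite by Assumption \ref{ass:derivexch}a. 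Writing $\sqrt{N}\nabla^T\hat{\mu}_N(\theta_j) = N^{-1/2}\sum_{n=1}^N \nabla^T Y_n(\theta_j)$ and applying the ordinary multivariate CLT to the stacked $DJ$-dimensional i.i.d.\ vector $(\nabla^T Y_n(\theta_1)^T, \dots, \nabla^T Y_n(\theta_J)^T)^T$ delivers joint asymptotic normality of the scores with covariance $\bf{\Lambda}$ having the claimed block structure. Inverting the stacked mean value equation and applying Slutsky with the block-diagonal limit $\bf{A}$ closes the argument.

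The main obstacle I anticipate is the mean value step: because $\nabla\hat{\mu}_N$ is vector-valued, the intermediate points $\tilde{\theta}_{j,N}^{(k)}$ differ across rows, so pointwise convergence of $\nabla^2\hat{\mu}_N$ at $\theta_j$ is insufficient and a uniform fSLLN on a neighbourhood of $\theta_j$ is really needed --- this is precisely what Assumption \ref{ass:derivexch}b(iii) buys. Everything after that is bookkeeping: the joint CLT is just a multivariate CLT on a larger vector, and the block-diagonal sandwich $\bf{A}\bf{\Lambda}\bf{A}^T$ falls out automatically from the fact that the premultiplier in the Slutsky step is block diagonal while $\bf{\Lambda}$ couples the peaks through cross-covariances of the gradient field.
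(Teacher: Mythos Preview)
Your proposal is correct and follows essentially the same route as the paper: verify the Amemiya/extremum-estimator conditions (consistency via fSLLN and an argmax argument, a multivariate CLT for the stacked gradients, and uniform convergence of the Hessian via the fSLLN under Assumption~\ref{ass:derivexch}b), then combine via Slutsky to obtain the block-diagonal sandwich $\bf{A}\bf{\Lambda}\bf{A}^T$. The only notable difference is that you carry out the mean value step row-by-row with coordinate-dependent intermediate points $\tilde{\theta}_{j,N}^{(k)}$, whereas the paper writes a single $\theta^*_{j,N}$; your version is the technically precise one for a vector-valued gradient, and your observation that this is exactly why uniform (not merely pointwise) convergence of $\nabla^2\hat{\mu}_N$ is required is well taken.
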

\begin{proof}
	For each $ j = 1, \dots, J, $ Taylor expansion about $ \theta_j $ gives,
	\begin{equation}\label{eq:Taylormu}
	0 = \nabla \hat{\mu}_N(\hat \theta_{j,N}) = \nabla \hat{\mu}_N( \theta_j)  + (\hat \theta_{j,N} - \theta_j)^T\nabla^2 \hat\mu_N(\theta^*_{j,N})
	\end{equation}
	for some $ \theta^*_{j,N} \in B_{\left\lVert  \theta_j - \hat \theta_{j,N}\right\rVert}(\theta_j) $. As such
	\begin{equation}\label{eq:cltsort}
	\sqrt{N}(\hat{\theta}_{j,N} - \theta_j) = -\left( \nabla^2 \hat\mu_N(\theta^*_{j,N}) \right)^{-1}\left( \sqrt{N} \nabla^T \hat{\mu}_N(\theta_j) \right), 
	\end{equation}
	and in particular,
	\begin{equation*}
	\sqrt{N}\pmatrix{\hat\theta_{1,N} - \theta_1; \vdots; \hat\theta_{J,N} - \theta_J} = -\bf{A_N}\sqrt{N}\pmatrix{\nabla^T \hat{\mu}_N(\theta_1); \vdots; \nabla^T \hat{\mu}_N(\theta_J)}.
	\end{equation*}
	Here $ \bf{A_N} \in \mathbb{R}^{DJ\times DJ}$ is a block diagonal matrix with $ J $ diagonal blocks such that, for $ j \in 1, \dots, J$, the $ j $th block is $ \left( \nabla^2 \hat\mu_N(\theta^*_{j,N}) \right)^{-1}. $ As $ N \rightarrow \infty $, $\bf{A_N} $ converges in probability to $ \bf{A} $ and as such applying the multivariate CLT to the vector of gradients: $ \sqrt{N}\pmatrix{\nabla \hat{\mu}_N(\theta_1), \cdots, \nabla \hat{\mu}_N(\theta_J)}^T $ (which can be written as a sum) and Slutsky yields the result. See Section \ref{SS:meanthm} for the technical details.
\end{proof}
\noindent Considering the $ j $th peak, under the assumptions of Theorem \ref{thm:meanclt}, as $ N \rightarrow \infty, $
\begin{equation*}
\sqrt{N}(\hat{\theta}_{j,N} - \theta_j) \convd \mathcal{N}(0, (\nabla^2 \mu(\theta_j))^{-1}\Lambda(\theta_j)(\nabla^2 \mu(\theta_j))^{-1}).
\end{equation*}
We can obtain a similar asymptotic result for the coefficients in the linear model, see Theorem \ref{thm:lmmean}. 

\subsection{Cohen's $ d $}\label{SS:CD}
Obtaining an analogous result to Theorem \ref{thm:meanclt} for Cohen's $ d $ is a little more complicated. Proving this can be broken down into two main steps: proving a pointwise CLT for the distribution of the derivative of a $ t $-statistic field (see below) and proving convergence of the Hessian in probability (see Proposition \ref{prop:CDconv}).

\subsubsection{Distribution of the derivative of a $ T $-field}\label{SS:Tderivdist}
If our component random fields are Gaussian then we can obtain finite sample distributions for the derivatives of $ Y_n $. To do so we extend \cite{Worsley1994}'s Lemma 5.1a to non-central and non-stationary $ t $-fields to derive the distribution the gradient of the $ t $-statistic field $ T_N $ defined in equation \eqref{eq:Tdef}. We start by assuming that the variance is constant, which simplifies the expressions, and then use this to prove the general result.
\begin{lem}\label{lem:Tstatderiv}
	Let $ Y_1 $ be a unit variance Gaussian random field with differentiable mean $ \mu $. Assume that  $ \Lambda(s) = \cov(\nabla^T Y_1(s)) $ is finite for all $ s \in S $. Then for each $ s \in S $,
	\begin{equation*}
	\nabla T_N(s) | T_N(s), \hat{\sigma}_N(s)  =_d 
	\frac{1}{\hat{\sigma}_N(s)}\mathcal{N}\left(\sqrt{N} \nabla \mu(s), \left(1+\frac{T_N(s)^2}{N-1}\right)\Lambda(s)\right).
	\end{equation*}
\end{lem}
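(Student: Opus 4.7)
The plan is to prove this by conditioning on the field values $\{Y_n(s)\}_{n=1}^N$ at the fixed point $s$, exploiting the fact that for a unit-variance Gaussian field, the value $Y_1(s)$ and the gradient $\nabla Y_1(s)$ are independent. To see this independence, note that if $C(u,v) := \cov(Y_1(u), Y_1(v))$ then $C(s,s) \equiv 1$, so differentiating in $s$ and using the symmetry $C(u,v) = C(v,u)$ forces $\cov(Y_1(s), \nabla Y_1(s)) = 0$. Jointly Gaussian and uncorrelated implies independent, so the collections $\{\epsilon_n(s)\}_{n=1}^N$ and $\{\nabla \epsilon_n(s)\}_{n=1}^N$ are independent (here $\epsilon_n = Y_n - \mu$).

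Next I would differentiate the defining identity $T_N = \sqrt{N}\hat\mu_N/\hat\sigma_N$ to get
\begin{equation*}
\nabla T_N(s) = \frac{1}{\hat\sigma_N(s)}\bigl(\sqrt{N}\,\nabla\hat\mu_N(s) - T_N(s)\,\nabla\hat\sigma_N(s)\bigr),
\end{equation*}
and express the two gradient terms as linear functionals of $\{\nabla\epsilon_n(s)\}_{n=1}^N$ with coefficients that depend only on $\{\epsilon_n(s)\}_{n=1}^N$. Specifically, $\sqrt{N}\,\nabla\hat\mu_N(s) - \sqrt{N}\,\nabla\mu(s) = \frac{1}{\sqrt{N}}\sum_n \nabla\epsilon_n(s)$, and a direct computation from $\hat\sigma_N^2 = \frac{1}{N-1}\sum_n(Y_n - \hat\mu_N)^2$ gives $\nabla\hat\sigma_N(s) = \frac{1}{(N-1)\hat\sigma_N(s)}\sum_n (\epsilon_n(s) - \bar\epsilon(s))\,\nabla\epsilon_n(s)$ (the $\bar V$ cancels because its coefficient sums to zero). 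Condition on $U := \{\epsilon_n(s)\}_{n=1}^N$: by the independence above, the collection $\{\nabla\epsilon_n(s)\}$ remains i.i.d.\ $\mathcal{N}(0,\Lambda(s))$, so $\nabla T_N(s)\mid U$ is Gaussian.

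Computing the conditional moments is then routine. The mean is $\sqrt{N}\nabla\mu(s)/\hat\sigma_N(s)$ since $E[\nabla\epsilon_n(s)]=0$, and the conditional covariance has three pieces: $N\,\cov(\nabla\hat\mu_N\mid U) = \Lambda(s)$; the cross term $\cov(\nabla\hat\mu_N,\nabla\hat\sigma_N\mid U)$ vanishes because the coefficients $(U_n - \bar U)/((N-1)\hat\sigma_N)$ sum to zero; and $\cov(\nabla\hat\sigma_N\mid U) = \frac{1}{(N-1)^2\hat\sigma_N^2}\sum_n (U_n - \bar U)^2\,\Lambda(s) = \frac{1}{N-1}\Lambda(s)$. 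Assembling:
\begin{equation*}
\cov(\nabla T_N(s)\mid U) = \frac{1}{\hat\sigma_N(s)^2}\Bigl(1 + \tfrac{T_N(s)^2}{N-1}\Bigr)\Lambda(s).
\end{equation*}

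Finally, because this conditional distribution depends on $U$ only through the pair $(T_N(s), \hat\sigma_N(s))$, the tower property lets me replace the conditioning on $U$ with conditioning on $(T_N(s), \hat\sigma_N(s))$, yielding the stated identity. The main conceptual obstacle is simply the vanishing of the cross-covariance between $\nabla\hat\mu_N$ and $\nabla\hat\sigma_N$ given $U$, which is what makes the formula as clean as it is; this is a consequence of the fact that the centering $U_n - \bar U$ has zero sum, so the linear functional defining $\nabla\hat\sigma_N$ is orthogonal (in the coefficient space) to the constant functional defining $\nabla\hat\mu_N$.
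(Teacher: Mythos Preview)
Your proof is correct and follows the same overall strategy as the paper: exploit the independence of a unit-variance Gaussian field from its own gradient at a point, differentiate $T_N$, and compute the conditional law of $\nabla T_N$ given field-value information. The execution differs in one instructive respect. The paper conditions only on the pair $(Z_N,V_N)$ (equivalently $(T_N,\hat\sigma_N)$) and then invokes Worsley's Lemma~3.2 (generalized in the appendix to non-stationary $\chi^2$ fields) to obtain that $\nabla Z_N$ and $\nabla V_N$, conditional on $(Z_N,V_N)$, are of the form $z_X$ and $2V_N^{1/2}z_V$ with $z_X,z_V$ \emph{independent} $\mathcal{N}(0,\Lambda)$. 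That independence is the delicate step in the paper's route and is what Worsley's lemma provides. You instead condition on the finer $\sigma$-algebra generated by all the $\epsilon_n(s)$; at that level $\nabla T_N$ is a fixed linear functional of the i.i.d.\ Gaussians $\nabla\epsilon_n(s)$, so the conditional covariance is a direct coefficient-square computation, and the vanishing of the cross term drops out from $\sum_n(\epsilon_n-\bar\epsilon)=0$ rather than from a rotational-invariance argument. Your route is therefore more elementary and self-contained (no external lemma needed); the paper's route is more modular and connects the result to the known distributional theory for derivatives of $\chi^2$ fields. The final tower-property step you use is exactly the paper's closing observation that conditioning on $(Z_N,V_N)$ is equivalent to conditioning on $(T_N,\hat\sigma_N)$.
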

\noindent In particular, if the fields are Gaussian and have unit variance, it follows that we have the following pointwise CLT for Cohen's $ d $:
\begin{equation*}
\sqrt{N}\left(\nabla d_N -\frac{\nabla \mu}{\hat{\sigma}_N}\right)= \nabla T_N - \frac{\nabla \mu\sqrt{N}}{\hat{\sigma}_N}\underset{N\rightarrow \infty}{\convd} \mathcal{N}\left(0, \left( 1+ \mu^2 \right)\Lambda\right).
\end{equation*} 
This follows since $ \hat{\sigma}_N \convas \sigma $ and $ \frac{T_N^2}{N-1}\conv \mu^2 $ as $ N \rightarrow \infty $. Let us now drop the constant variance condition and write
\begin{align}\label{eq:modT}
T_N &= 
\frac{\frac{1}{\sqrt{N}}\sum_{n = 1}^N Y_n}{\left( \frac{1}{N-1}\sum_{n = 1}^N (Y_n - \frac{1}{N}\sum Y_n)^2 \right)^{1/2}}\\ 
&= \frac{\frac{1}{\sqrt{N}}\sum_{n = 1}^N Y_n/\sigma}{\left( \frac{1}{N-1}\sum_{n = 1}^N (Y_n/\sigma - \frac{1}{N}\sum Y_n/\sigma)^2 \right)^{1/2}}
\end{align}
which is the $ t $-statistic derived from component Gaussian random fields $ Y_n' = Y_n/\sigma $, $ n = 1, \dots, N, $ which are i.i.d and have constant variance 1. We can thus apply the constant variance result to yield the following corollary.
\begin{cor}\label{cor:Tstatderiv}
	Let $ Y_1$ be a Gaussian random field and suppose that $ \sigma $ and $ \mu $ are differentiable. Assume that  $ \Lambda(s) $ is finite for all $ s \in S $, then for each $ s \in S $,
	\begin{equation*}
	\nabla T_N(s) | T_N(s), \hat{\sigma}_N(s) =_d 
	\frac{\sigma(s)}{\hat{\sigma}_N(s)}\mathcal{N}\left(\sqrt{N} \nabla d(s), \left(1+\frac{T_N(s)^2}{N-1}\right)\Lambda'(s)\right),
	\end{equation*} 
	where 
	\begin{equation*}
	\Lambda' := \cov\left(\nabla^T \frac{Y_1}{\sigma}\right) = \frac{\Lambda}{\sigma^2} - \frac{\nabla^T \sigma^2 \Gamma}{\sigma^4} + \frac{\nabla^T \sigma^2(\nabla \sigma^2)}{4\sigma^4}.
	\end{equation*}
\end{cor}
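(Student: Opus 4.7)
The plan is to reduce the non-constant variance case to Lemma \ref{lem:Tstatderiv} via normalization. Set $Y_n' := Y_n/\sigma$ for $n = 1, \dots, N$. Since $\sigma$ is differentiable and bounded below by a positive constant, and each $Y_n$ is Gaussian with mean $\mu$ and variance $\sigma^2$, the $Y_n'$ are i.i.d.\ Gaussian random fields with unit variance and differentiable mean $d = \mu/\sigma$. The algebraic manipulation displayed in \eqref{eq:modT} shows that the $t$-statistic built from $(Y_n')$ coincides with $T_N$, while the corresponding sample standard deviation satisfies $\hat{\sigma}_N'(s) = \hat{\sigma}_N(s)/\sigma(s)$ pointwise, so conditioning on $\hat{\sigma}_N'(s)$ is equivalent to conditioning on $\hat{\sigma}_N(s)$.

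Next I would invoke Lemma \ref{lem:Tstatderiv} applied to $(Y_n')$ at the point $s$, which produces
\begin{equation*}
\nabla T_N(s) \mid T_N(s),\, \hat{\sigma}_N'(s) =_d \frac{1}{\hat{\sigma}_N'(s)}\,\mathcal{N}\!\left(\sqrt{N}\,\nabla d(s),\, \left(1+\frac{T_N(s)^2}{N-1}\right)\Lambda'(s)\right),
\end{equation*}
where $\Lambda'(s) = \cov(\nabla^T Y_1'(s))$. Substituting $1/\hat{\sigma}_N'(s) = \sigma(s)/\hat{\sigma}_N(s)$ recovers the prefactor stated in the corollary. It remains only to put $\Lambda'$ into the claimed explicit form.

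For that, apply the quotient rule to obtain $\nabla^T(Y_1/\sigma) = \sigma^{-1}\nabla^T Y_1 - \sigma^{-2}\,Y_1\,\nabla^T\sigma$; this is legitimate since $\sigma$ is a deterministic differentiable function and $Y_1$ is almost surely differentiable. Centering each summand, expanding the outer product and taking expectations yields four contributions: a ``pure derivative'' term $\Lambda/\sigma^2$; two cross terms which, via the definitions $\Gamma = \mathbb{E}[(Y_1-\mu)(\nabla Y_1)^T]$, reduce to bilinear expressions in $\Gamma$ and $\nabla\sigma$; and a ``pure field'' term equal to $\mathrm{Var}(Y_1)\,\nabla^T\sigma\,\nabla\sigma/\sigma^4 = \nabla^T\sigma\,\nabla\sigma/\sigma^2$. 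Rewriting $\nabla\sigma = \nabla\sigma^2/(2\sigma)$ throughout converts each of these pieces into the stated form involving $\nabla\sigma^2$.

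The only nontrivial step is the bookkeeping at the end: correctly tracking the row/column conventions so that the two cross terms coming from $\Gamma$ assemble into the single expression $-\nabla^T\sigma^2\,\Gamma/\sigma^4$ displayed in the statement. Everything else -- unit variance and Gaussianity of the $Y_n'$, the $t$-statistic identity, and the $\sigma/\hat{\sigma}_N$ prefactor -- is a direct consequence of the normalization.
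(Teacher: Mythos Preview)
Your proposal is correct and follows essentially the same route as the paper: normalize to unit variance via $Y_n' = Y_n/\sigma$, invoke Lemma~\ref{lem:Tstatderiv} on the normalized fields using the $t$-statistic identity \eqref{eq:modT}, and then expand $\Lambda' = \cov(\nabla^T(Y_1/\sigma))$ by the quotient rule. The only cosmetic difference is that the paper writes $\nabla(Y_1/\sigma)$ directly in terms of $\nabla\sigma^2$ from the outset rather than passing through $\nabla\sigma$, and your explicit mention of centering before expanding the outer product is, if anything, slightly more careful than the paper's computation.
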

\noindent As such for all $ s \in S $, as $ N \rightarrow \infty $
\begin{equation}\label{eq:GaussCDconv}
\sqrt{N}\left(\nabla d_N(s) - \frac{\sigma(s)\nabla (d(s))}{\hat{\sigma}_N(s)} \right) \convd \mathcal{N}\left(0, \left(1+ d(s)^2\right)\Lambda'(s)\right).
\end{equation} 
Examining this expression we see that when the mean of the component fields is zero, then in particular $ d = 0 $ and so the limiting variance does not depend on $ \sigma $, as we would expect, as neither does the $ t $-statistic. When the mean is non-zero, the dependence on $ \sigma $ is captured via Cohen's $ d $ as $ \Lambda' $ does not depend on $ \sigma. $ Note that if $ \sigma^2 = 1 $ everywhere then we recover the unit-variance expression.

\subsubsection{CLTs for the derivative of Cohen's $ d $ and peak location}
In practice the component fields may not be Gaussian. In this case there is no easy closed form for the finite sample distribution of the derivative of the $ t $-statistic, however, it is still possible to derive an asymptotic limit for its distribution. 

\begin{thm}\label{thm:derivclt}
	Suppose that $ Y_1$ has differentiable mean and variance and that\\ $ \cov\left( \nabla^T \frac{Y_1(s)}{\sigma(s)} \right) < \infty$ for all $ s \in S $. Then for all $ J \in \mathbb{N} $ and $ s_1, \dots, s_J \in S, $
	\begin{equation*}
	\sqrt{N}\left( \nabla d_N(s_1) - \frac{\sigma(s_J)\nabla d(s_1)}{\hat{\sigma}_N(s_1)}, \dots, \nabla d_N(s_J) - \frac{\sigma(s_J)\nabla d(s_J)}{\hat{\sigma}_N(s_J)} \right)^T
	\end{equation*}
	satisfies a CLT as $ N \rightarrow \infty. $
\end{thm}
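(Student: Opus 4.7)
The plan is to reduce the statement to a multivariate CLT for a single sum of i.i.d.\ mean-zero random vectors via the substitution $Z_n := Y_n/\sigma$. The $Z_n$ are then i.i.d.\ with pointwise mean $d = \mu/\sigma$ and constant pointwise variance $1$, a direct calculation gives $d_N = \bar Z_N/\hat\tau_N$ where $\hat\tau_N := \hat\sigma_N/\sigma$, and $\sigma\nabla d/\hat\sigma_N = \nabla d/\hat\tau_N$. Differentiating $d_N$ with the quotient rule and regrouping then yields the decomposition
\begin{equation*}
\sqrt N\bigl(\nabla d_N - \tfrac{\sigma \nabla d}{\hat\sigma_N}\bigr)
= \frac{\sqrt N\,(\nabla\bar Z_N - \nabla d)}{\hat\tau_N}
\;-\; \frac{\bar Z_N\cdot \sqrt N\,\nabla\hat\tau_N}{\hat\tau_N^{2}}.
\end{equation*}
The first summand is $\hat\tau_N^{-1}$ times a normalised partial sum of the i.i.d.\ mean-zero vectors $V_n(s_j) := \nabla Z_n(s_j) - \nabla d(s_j)$ (zero-mean by the DE condition, Lemma \ref{lem:DEcond}); combined with the pointwise SLLN $\hat\tau_N \to 1$ a.s., this piece is immediately controlled at any finite collection $s_1,\dots,s_J$ by the standard multivariate CLT.

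The main obstacle is the second summand, since naively $\sqrt N\,\nabla\hat\tau_N$ is of order $\sqrt N$ rather than $O_p(1)$. The key cancellation comes from the constant-variance property of $Z_n$: differentiating $\mathbb E[Z_n^2] = 1 + d^2$ and applying DE yields $\mathbb E[Z_n \nabla Z_n] = d\nabla d$, hence $\mathbb E[(Z_n - d)(\nabla Z_n - \nabla d)] = 0$. Writing
\begin{equation*}
\nabla \hat\tau_N^{\,2} = \frac{2}{N-1}\Bigl(\sum_n U_n V_n - N\bar U_N \bar V_N\Bigr), \qquad U_n := Z_n - d,
\end{equation*}
the $U_n V_n$ are then i.i.d.\ mean-zero, so $N^{-1/2}\sum U_n V_n$ satisfies a CLT, while the bilinear correction $\bar U_N \bar V_N = O_p(1/N)$ contributes $o_p(1)$ at rate $1/\sqrt N$. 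Dividing by $2\hat\tau_N \to 2$ shows $\sqrt N\,\nabla\hat\tau_N$ is asymptotically Gaussian.

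The proof is finished by stacking the $2JD$-dimensional summand $(V_n(s_j), U_n(s_j)V_n(s_j))_{j=1}^{J}$ and invoking the multivariate Lindeberg--L\'evy CLT in one pass, yielding joint asymptotic normality of $\bigl(\sqrt N(\nabla\bar Z_N(s_j) - \nabla d(s_j)),\,\sqrt N\,\nabla\hat\tau_N(s_j)\bigr)_{j=1}^{J}$. Slutsky's theorem, together with $\bar Z_N \to d$ and $\hat\tau_N \to 1$ a.s., then delivers the claimed joint CLT for the target vector. The only additional moment condition required beyond the hypothesis $\mathrm{cov}(\nabla^T Y_1(s)/\sigma(s)) < \infty$ is finite variance of $U_1 V_1$, which by Cauchy--Schwarz holds whenever $Z_1$ and $\nabla Z_1$ have finite fourth moments, automatic in both the Gaussian (Proposition \ref{prop:Gaussfin}) and twice-differentiable convolution-field (Proposition \ref{prop:ass1conv}) settings considered elsewhere in the paper.
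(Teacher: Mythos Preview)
Your proposal is correct and follows essentially the same route as the paper: reduce to the unit-variance case via $Z_n = Y_n/\sigma$, decompose $\nabla d_N$ by the quotient rule, observe that the constant-variance property makes $\epsilon_n\nabla\epsilon_n$ (your $U_nV_n$) mean zero so that $\sqrt N\,\nabla\hat\tau_N$ is governed by a CLT, stack everything into one i.i.d.\ sum, and finish with the multivariate CLT plus Slutsky. The paper packages the joint CLT for $(\nabla Z_N,\nabla V_N/\sqrt N)$ separately as Lemma~\ref{lem:derivclt} and works with $T_N$ and $V_N$ rather than $d_N$ and $\hat\tau_N$, but the decomposition and the key cancellation are the same; your note about the implicit fourth-moment requirement on $Z_1,\nabla Z_1$ for the variance of $U_1V_1$ is a fair observation that the paper leaves tacit.
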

\noindent We can recover the Gaussian case \eqref{eq:GaussCDconv} as follows.
\begin{rmk}\label{rem:rem}
	Suppose that the assumptions of Theorem \ref{thm:derivclt} hold and that $ (Y_n)_{n \in \mathbb{N}} $ are Gaussian. Then, letting $ \Lambda' = \cov(Y_1/\sigma) $, Theorem \ref{thm:derivclt} implies that
	\begin{equation*}
	\sqrt{N}\left( \displaystyle \nabla d_N - \frac{\sigma\nabla d}{\hat{\sigma}_N}\right) \convd \mathcal{N}\left( 0, \left( 1 + d^2 \right) \Lambda'\right)
	\end{equation*}
	pointwise as $ N \rightarrow \infty $, which matches \eqref{eq:GaussCDconv}. See Section \ref{A:remark} for a proof of this.
\end{rmk}


\noindent Putting the pieces together, as with the mean, gives us the following theorem.
\begin{thm}\label{thm:CDlocclt}
	Suppose that $ Y_1 $ satisfies Assumption \ref{ass:derivexch} with Cohen's $ d $: $ d = \frac{\mu}{\sigma} $ satisfying Assumption \ref{ass:peakconv}. For each $ j = 1, \dots, J$ corresponding to a maximum of $ d $, let $ \hat{\theta}_{j,N} = \argmax_{t \in B_j} d_N(t)  $,  (and for the minima let $ \hat{\theta}_{j,N} = \argmin_{t \in B_j} d_N(t) $) and let 
	$ \boldsymbol{\hat{\theta}_{N}} := (\hat{\theta}_{1,N}^T, \dots, \hat{\theta}_{J,N}^T)^T $ and $ \bf{\theta} := (\theta_{1}^T, \dots, \theta_{J}^T)^T. $ Then
	\begin{equation*}
	\sqrt{N}(\bf{\hat{\theta}_N} - \bf{\theta}) \convd \mathcal{N}(0, \bf{A}\mathbf{\Lambda}'\bf{A}^T)
	\end{equation*} 
	as $ N \conv \infty $. Here $ \bf{A} $ $ \in \mathbb{R}^{DJ\times DJ}$ is a block diagonal matrix with $ J $ diagonal $ D\times D $ blocks such that, for $ j \in 1, \dots, J$, the $ j $th block is $ \left( \nabla^2 d(\theta_j)\right)^{-1}$ and $ \mathbf{\Lambda}' \in \mathbb{R}^{DJ\times DJ} $ is the limiting covariance from Theorem \ref{thm:derivclt}.
\end{thm}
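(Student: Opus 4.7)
The plan is to mirror the proof of Theorem \ref{thm:meanclt}, replacing the mean field $\hat{\mu}_N$ by $d_N$ and using Theorem \ref{thm:derivclt} in place of the multivariate CLT for sums. First I would verify peak identifiability: since $d = \mu/\sigma$ satisfies Assumption \ref{ass:peakconv} and, by Assumption \ref{ass:derivexch}, the noise $d_N - d$ has derivatives up to second order that converge to zero uniformly in probability (established in the Cohen's $d$ consistency discussion in Section \ref{SS:MC2}), Proposition \ref{prop:peakconv} implies that with probability tending to one, each $B_j$ corresponding to a non-degenerate maximum of $d$ contains exactly one local maximum of $d_N$. On this event $\hat{\theta}_{j,N}$ is an interior critical point of $d_N$, so $\nabla d_N(\hat{\theta}_{j,N}) = 0$.

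Next I would Taylor expand component-wise about $\theta_j$, obtaining
\begin{equation*}
0 = \nabla d_N(\hat{\theta}_{j,N}) = \nabla d_N(\theta_j) + (\hat{\theta}_{j,N} - \theta_j)^T \nabla^2 d_N(\theta^*_{j,N})
\end{equation*}
for some intermediate $\theta^*_{j,N}$ on the segment between $\theta_j$ and $\hat{\theta}_{j,N}$. Solving,
\begin{equation*}
\sqrt{N}(\hat{\theta}_{j,N} - \theta_j) = -\bigl(\nabla^2 d_N(\theta^*_{j,N})\bigr)^{-1}\sqrt{N}\,\nabla^T d_N(\theta_j),
\end{equation*}
and stacking the $J$ peaks yields $\sqrt{N}(\boldsymbol{\hat{\theta}_N} - \boldsymbol{\theta}) = -\mathbf{A}_N \sqrt{N}\bigl(\nabla^T d_N(\theta_1), \dots, \nabla^T d_N(\theta_J)\bigr)^T$, where $\mathbf{A}_N$ is block diagonal with blocks $(\nabla^2 d_N(\theta^*_{j,N}))^{-1}$. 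Since $\theta_j$ is a critical point of $d$, we have $\nabla d(\theta_j) = 0$, so the centering term $\sigma(\theta_j)\nabla d(\theta_j)/\hat{\sigma}_N(\theta_j)$ in Theorem \ref{thm:derivclt} vanishes identically and the theorem gives a joint CLT for $\sqrt{N}\bigl(\nabla^T d_N(\theta_1), \dots, \nabla^T d_N(\theta_J)\bigr)^T$ with limiting covariance $\boldsymbol{\Lambda}'$.

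To finish, I would show $\mathbf{A}_N \convp \mathbf{A}$ and apply Slutsky. This requires $\nabla^2 d_N(\theta^*_{j,N}) \convp \nabla^2 d(\theta_j)$ for each $j$. Consistency of $\hat{\theta}_{j,N}$ (which follows from the identifiability argument and the uniform convergence $d_N \convup d$ on the compact $B_j$, combined with the strict maximum property guaranteed by Assumption \ref{ass:peakconv}b) gives $\theta^*_{j,N} \convp \theta_j$, and Assumption \ref{ass:derivexch} combined with the fSLLN yields uniform convergence of $\nabla^2 d_N$ to $\nabla^2 d$ on $B_j$; together these give the required convergence of the Hessian at the random point. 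The resulting limiting covariance is $\mathbf{A}\boldsymbol{\Lambda}'\mathbf{A}^T$ as stated.

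The main obstacle is the last step: unlike the mean case, $d_N$ is a ratio of random fields, so uniform convergence of $\nabla^2 d_N$ to $\nabla^2 d$ is not immediate from the fSLLN applied to $Y_1$ alone. One must differentiate the quotient twice, producing terms involving $\hat{\mu}_N$, $\hat{\sigma}_N$, and their first and second derivatives, and then invoke Assumption \ref{ass:derivexch}c together with the fSLLN for $f^2$ to control each term. This is presumably what Proposition \ref{prop:CDconv} packages, and I would simply cite it, after verifying that its hypotheses are implied by Assumption \ref{ass:derivexch}.
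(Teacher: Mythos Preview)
Your proposal is correct and follows essentially the same route as the paper's proof. The paper packages the argument by verifying the abstract conditions CF and EE2 of \cite{Xiaoxia} and invoking their Theorem 4.1, whereas you write out the Taylor expansion explicitly (as in the proof of Theorem \ref{thm:meanclt}); but the ingredients are identical: consistency of $\hat{\theta}_{j,N}$ via uniform convergence $d_N \convup d$ on compact $B_j$, the vanishing of the centering term in Theorem \ref{thm:derivclt} because $\nabla d(\theta_j)=0$, and convergence of the Hessian at the random point via Proposition \ref{prop:CDconv} plus continuity of $\nabla^2 d$.
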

\subsection{Asymptotic Confidence Regions}\label{SS:CR}
Given these results we can obtain confidence regions for peak location which have the correct asymptotic coverage. For the mean, for each $ 1 \leq j \leq J $, letting
\begin{equation*}
\Sigma_j =  (\nabla^2 \mu(\theta_j))^{-1}\cov(\nabla^T Y_1(\theta_j))(\nabla^2 \mu(\theta_j))^{-1}
\end{equation*}
and applying Theorem \ref{thm:meanclt} we have
\begin{equation*}
\sqrt{N}\Sigma_j^{-1/2}(\hat\theta_{j,N} - \theta_j) \sim \mathcal{N}(0,I_D) \implies N(\hat\theta_{j,N} - \theta_j)^T\Sigma_j^{-1}(\hat\theta_{j,N} - \theta_j) \sim \chi^2_D.
\end{equation*}
Thus for $ \alpha \in (0,1) $, letting $ \chi^2_{D, 1 - \alpha} $ be the $ 1- \alpha $ quantile of the $ \chi^2_D$ distribution,
\begin{equation*}
\left\lbrace \theta: N(\hat\theta_{j,N} - \theta)^T\Sigma_j^{-1}(\hat\theta_{j,N} - \theta) < \chi^2_{D, 1 - \alpha} \right\rbrace
\end{equation*}
is a $ (1-\alpha)\% $ asymptotic confidence region for $ \theta_j $. In practice $ \Sigma_j $ is unknown however taking $ \hat{\Lambda}(\hat\theta_{j,N})  $ to be the sample covariance of $ \nabla Y_1(\theta_j), \dots, \nabla Y_N(\theta_j) $ and estimating the Hessian of the mean at $ \theta_j $ by $ \nabla^2 \hat{\mu}_N(\hat\theta_j) $ we obtain an asymptotic $ (1-\alpha)\%  $ confidence region for $ \theta_j $ as:
\begin{equation}\label{eq:aymCR}
\left\lbrace \theta: N(\hat\theta_{j,N} - \theta)^T\hat{\Sigma}_j^{-1}(\hat\theta_{j,N} - \theta) < \chi^2_{D, 1 - \alpha} \right\rbrace
\end{equation}
where $ \hat{\Sigma}_j = (\nabla^2 \hat{\mu}(\hat{\theta}_j))^{-1}\hat{\Lambda}(\hat{\theta}_j)(\nabla^2 \hat{\mu}(\hat{\theta}_j))^{-1}$. For Cohen's $ d $, we can derive analogous confidence regions for peak location using Theorem \ref{thm:CDlocclt}. In this case, taking
\begin{equation*}
\hat\Sigma_j = \left( 1+d_N(\hat\theta_{j,N})^2 \right) \left( \nabla^2 d_N(\hat\theta_{j,N}) \right)^{-1} \hat\Lambda'(\hat\theta_{j,N})\left( \nabla^2 d_N(\hat\theta_{j,N}) \right)^{-1}
\end{equation*}
we obtain a $ (1-\alpha)\% $ asymptotic confidence region for each peak location via equation (\ref{eq:aymCR}). When the fields are stationary a better estimate of $ \Sigma_j $ can be obtained in both cases since a better estimate of $ \Lambda $ or $ \Lambda' $ can be obtained by averaging over space rather than using the estimate from a single point. 

So far we have described confidence regions that are asymptotically valid at each peak. It may instead be desirable to obtain ones which are simultaneously valid over all peaks. To do so, we apply a Bonferroni correction, resulting in regions
\begin{equation}\label{eq:aymbonf}
\left\lbrace \theta: N(\hat\theta_{j,N} - \theta)^T\hat{\Sigma}_j^{-1}(\hat\theta_{j,N} - \theta) < \chi^2_{D, 1 - \alpha/J} \right\rbrace
\end{equation}
about each peak. This will control the familywise coverage rate over peaks. Assuming that the correlation function decays and the peaks are reasonably well separated, Theorems \ref{thm:meanclt} and \ref{thm:CDlocclt} show that peak locations are approximately independent so little will be lost by applying a Bonferroni correction.

\subsection{Monte Carlo Confidence Regions}\label{SS:MC}
The above confidence regions perform well asymptotically however typically give undercoverage in the finite sample (see Section \ref{S:Sims}). This is because, amongst other factors, they do not account for the extra variability that occurs because $  \nabla^2 \hat\mu_N(\theta^*_{j,N}) $, defined in equation \eqref{eq:cltsort} has not converged. Under the additional assumptions of stationarity and Gaussianity it is in fact possible to obtain better finite sample coverage by taking account of the joint distribution between $ \nabla^T \hat\mu_N $ and $ \nabla^2 \hat\mu_N $. To do so observe that using an extra term in the Taylor expansion (\ref{eq:Taylormu}), for $ 1 \leq j \leq J $ and $ N \in \mathbb{N} $, it follows that
\begin{align}\label{eq:relation}
\hat{\theta}_{j,N} - \theta_j &= -\left( \nabla^2 \hat\mu_N(\theta^*_{j,N}) \right)^{-1}\nabla^T \hat{\mu}_N(\theta_j) \\
& =  -\left( \nabla^2 \hat{\mu}_N(\theta_j) + \frac{1}{2}(\hat{\theta}_{j,N} - \theta_j)^T\nabla^3 \hat{\mu}_N(\tilde{\theta}_{j,N}) \right)^{-1}\nabla^T\hat{\mu}_N(\theta_j)
\end{align}
where $ \nabla^3 \hat{\mu}_N $ is a 3-dimensional array corresponding to the third derivative and $ \theta^*_{j,N}, \tilde\theta_{j,N} \in B_{\left\lVert  \theta_j - \hat \theta_{j,N}\right\rVert}(\theta_j)$.

As $ N \rightarrow \infty $, the third derivative, $ \nabla^3 \hat{\mu}_N $ converges uniformly to $ \nabla^3 \mu $ and so for large $ N, \frac{1}{2}(\hat{\theta}_{j,N} - \theta_j)^T\nabla^3 \hat{\mu}_N(\tilde{\theta}_{j,N})$ is small relative to $ \nabla^2\hat{\mu}_N(\theta_j) $. This motivates using the distribution of $ -(\nabla^2 \hat{\mu}_N(\theta_j))^{-1}\nabla\hat{\mu}_N(\theta_j) $ to infer on the distribution of $ \hat{\theta}_{j,N} - \theta_j. $ In practice $ \theta_j $ is unknown but we can use the fact that the numerator and denominator are independent Gaussian random variables, estimate their covariance matrices (at the peak $ \hat{\theta}_N $) and simulate from a distribution that is approximately the same as the one above. 

Letting \textbf{vech} be the operation which sends $ D $-dimensional symmetric matrices to $ \mathbb{R}^{D(D+1)/2} $, 
	\begin{equation*}
	\pmatrix{\nabla^T \hat{\mu}_N(\theta_j);  \textbf{vech}(\nabla^2 \hat\mu_N(\theta_j)) } \sim \mathcal{N}\left( \pmatrix{0; \textbf{vech}(\nabla^2 \mu_N(\theta_j)) }, \frac{1}{N}\pmatrix{\Lambda. 0; 0. \Omega}  \right)
	\end{equation*}
	where for each $ s \in S $, $ \Omega = \cov(\textbf{vech}(\nabla^2 Y_1(s))) $. As the fields are assumed to be stationary we can take advantage of the fact that, $ \Lambda $ and $ \Omega $, are constant over $ S $ to obtain very good estimates $ \hat{\Lambda} $ and $ \hat{\Omega} $ of these quantities based on data from the whole image (rather than from just around the peak). The peak location $ \theta_j $ is unknown so we estimate $ \nabla^2 \mu_N(\theta_j) $ using $ \nabla^2 \hat\mu_N(\hat{\theta}_{j,N})$. 
%
	
	To generate samples from this distribution we choose a (reasonably large) number of simulations $ K \in \mathbb{N} $ and generate independent i.i.d sequences of random vectors $ (A_{k,N})_{1 \leq k \leq K} \sim \mathcal{N}(0, \hat{\Lambda}/N)  $ and $ (B_{k,N})_{1 \leq k \leq K} \sim \mathcal{N}(\textbf{vech}(\nabla^2 \mu_N(\hat{\theta}_{j,N})), \hat{\Omega}/N) $. Then our Monte Carlo draws from the estimated peak location distribution are 
	\begin{equation}\label{eq:deltak}
		\delta_{k,N} = (\textbf{vech}^{-1}(B_{k,N}))^{-1}A_{k,N}
	\end{equation}
	for $ 1 \leq k \leq K $ and where $ \textbf{vech}^{-1} $ is the inverse of the \textbf{vech} operation\footnote{Note that in small sample sizes truncation is occasionally necessary for stability i.e. to ensure that $ \det(\textbf{vech}^{-1}(B_{k,N})) $ is kept away from zero, see Section \ref{A:ratioexplan} for details on how this is done in practice.}. 
	
	We use this to provide Monte Carlo confidence regions which have the same shape, up to parameter estimation, as those used for the asymptotic distribution but adjusting their size to ensure $ (1-\alpha)\% $ coverage based on the Monte Carlo draws. Let $\hat{\Sigma}'_j =  (\nabla^2 \hat{\mu}_N(\hat{\theta}_j))^{-1}\hat{\Lambda}(\nabla^2 \hat{\mu}_N(\hat{\theta}_j))^{-1} $ and for $ 0 < \alpha < 1 $, choose $ \lambda_{\alpha} $ such that 
	\begin{equation*}
	\frac{1}{K} \sum_{k = 1}^K 1\left[ N(\hat\delta_{k,N}^T(\hat{\Sigma}'_j)^{-1}\delta_{k,N}) > \lambda_{\alpha}\right] = \frac{\lfloor \alpha K \rfloor}{K}.
	\end{equation*}
	Note that $ \lambda_{\alpha} $ exists as there are no ties with probability 1. Given this we define the Monte Carlo confidence region to be 
	\begin{equation*}
	\left\lbrace \theta: N(\hat\theta_{j,N} - \theta)^T(\hat{\Sigma}'_j)^{-1}(\hat\theta_{j,N} - \theta) < \lambda_{\alpha} \right\rbrace.
	\end{equation*}
	As $ N \rightarrow \infty, $ $ B_{k,N} \convas \textbf{vech}(\nabla^2 \mu(\theta_j)) $ and $ \sqrt{N}A_{k,N} \convd \mathcal{N}(0, \Lambda) $ so applying Theorem \ref{thm:meanclt} it follows that the probability that the $ j $th Monte Carlo confidence region contains $ \theta_j $ converges to $ 1-\alpha $. In fact, since they have been constructed to have the same shape, the Monte Carlo confidence regions will essentially be the same as the asymptotic confidence regions given large enough $ N. $ The advantage of the Monte Carlo confidence regions is that they take into account the variability in the second derivative and thus have a finite sample performance that is substantially improved (see Section \ref{S:Sims}). As with the asymptotic regions, simultaneous coverage can be obtained by using the $ \alpha/J $ quantiles for each region.
	
	Unlike for the mean, Monte Carlo confidence regions for the locations of peaks of Cohen's $ d $ are not easy to obtain, see the discussion for further details.

\section{Simulations and Data Application}\label{S:Sims}
We conduct simulations to evaluate the coverage of the confidence regions in practice. We demonstrate their validity in 1D and 2D as the sample size increases and investigate their performance relative to the shape of the signal and the smoothness of the noise. We verify that our methods have the correct asymptotic coverage in these settings and illustrate how they can be applied in practice to provide confidence regions for peak location. We present results for inferring on the location of peaks of the mean below - results for the 1D simulations can be found in Section \ref{A:fs}. To illustrate the theory in practice, we apply it to provide confidence intervals for the locations of peaks in a 1D MEG power spectrum and for 2D slices of fMRI data obtained from the UK Biobank. For each application of the Monte Carlo method we used $ K = 10^5 $ draws from the distribution in \eqref{eq:deltak}.

\subsection{Coverage}\label{SS:coverage}
Before we present out results we must formally define what we mean by coverage. In each of our simulations we will generate noise about a fixed mean function with a given number of peaks (varying the smoothness of the noise, the number of samples $ N $ and the shape of the peak). For each setting, given $ \alpha > 0 $, we run $ n_{\text{sim}} \in \mathbb{N} $ simulations. For each simulation $ i \in \left\lbrace 1, \dots, n_{\text{sim}} \right\rbrace$ we calculate a $( 1-\alpha)\% $ confidence region $ R^{\alpha}_{i,j}$ for the true location $ \theta_j $ of a given peak of the mean/Cohen's $ d $, as discussed in Section \ref{SS:CR} (for $ 1 \leq j \leq J $).  Given this we define the \textbf{true coverage} about the $ j $th peak to be $ \mathbb{P}\left( \theta_j \in R^{\alpha}_{1,j} \right) $. We can approximate this by the \textbf{empirical coverage} $ \frac{1}{n_{\text{sim}}}\sum_{i = 1}^{n_{\text{sim}}} 1\left[ \theta_j \in R^{\alpha}_{i,j} \right], $ where $ 1\left[ \cdot \right] $ denotes the indicator function. This converges to the true coverage by the SLLN as $ n_{\text{sim}} \rightarrow \infty $. The $ R^{\alpha}_{i,j} $ are asymptotic $ (1-\alpha)\% $ confidence regions so the true coverage converges to $ 1-\alpha $ as $ N \rightarrow \infty$ and the empirical coverage should thus converges to $ 1-\alpha $ as $ n_{\text{sim}}, N \rightarrow \infty$. 

In the presence of multiple peaks we will summarize the performance of the peak location confidence regions by considering the \textbf{average empirical coverage}, which is defined as $ \frac{1}{Jn_{\text{sim}}}\sum_{j = 1}^J\sum_{i = 1}^{n_{\text{sim}}} 1\left[ \theta_j \in R^{\alpha}_{i,j} \right]. $ We will also consider the \textbf{true joint coverage} which is defined as $ \mathbb{P}\left( \theta_j \in R_{1, j}^{\alpha/J}  \text{ for } 1 \leq j \leq J\right) $ and will approximate this using the \textbf{empirical joint coverage}: $ \frac{1}{n_{\text{sim}}}\sum_{i = 1}^{n_{\text{sim}}} 1\left[ \theta_j \in R^{\alpha/J}_{i,j} \text{ for } 1 \leq j \leq J\right]. $ Based on the theory derived in the previous section these should also converge to $ 1 - \alpha $ in the limit as $ n_{\text{sim}}, N $ tend to infinity.
\subsection{Simulations: peaks of the mean}\label{SS:meansim}
Our first set of simulations consists of 2D stationary Gaussian noise added to peaked signals. The noise was generated by smoothing Gaussian white noise with a Gaussian kernel with given FWHM (full width at half maximum) to generate 2D convolution fields (which were truncated to account for the edge effect - see e.g. \cite{Davenport2022Ravi}). We add these fields to the mean function to obtain simulations with $ N \in \left\lbrace 20,40,\dots 200 \right\rbrace $ and a range of applied smoothing levels. We repeated this 5000 times in each setting in order to calculate the empirical coverage. The results are shown in Figures \ref{fig:2Disotropic} and \ref{fig:2Disotropic19}. The dotted lines, in these and all other corresponding figures, give $ 95\% $ confidence bands and are obtained using the normal approximation to the binomial distribution. The figures show that as the number of subjects increase the asymptotic and Monte Carlo approaches have a coverage that converges to 0.95 as the number of subjects increases. However, the rate of convergence of the Monte Carlo method is substantially faster, meaning that reasonable coverage is obtained even for low sample sizes.

The lower the FWHM of the noise relative to the shape of the peak, the larger the number of subjects that is needed to obtain a good level of coverage. In many settings of interest high smoothness relative to the shape of the peak is a reasonable assumption, allowing us to obtain good coverage given available sample sizes. Here stationarity allows us to obtain better estimates of quantities involved (for both the asymptotic and Monte Carlo methods), such as $ \Lambda $, as we can average over the whole image. However, asymptotic coverage is achieved regardless of stationarity for both the asymptotic and Monte Carlo methods. The results from simulations about different 1D peaks (including results where the underlying noise distribution is non-Gaussian) are available in Section \ref{A:fsmean}.

\begin{figure}[h!]
	\begin{subfigure}[t]{0.26\textwidth}
		\centering
		\includegraphics[width=\textwidth]{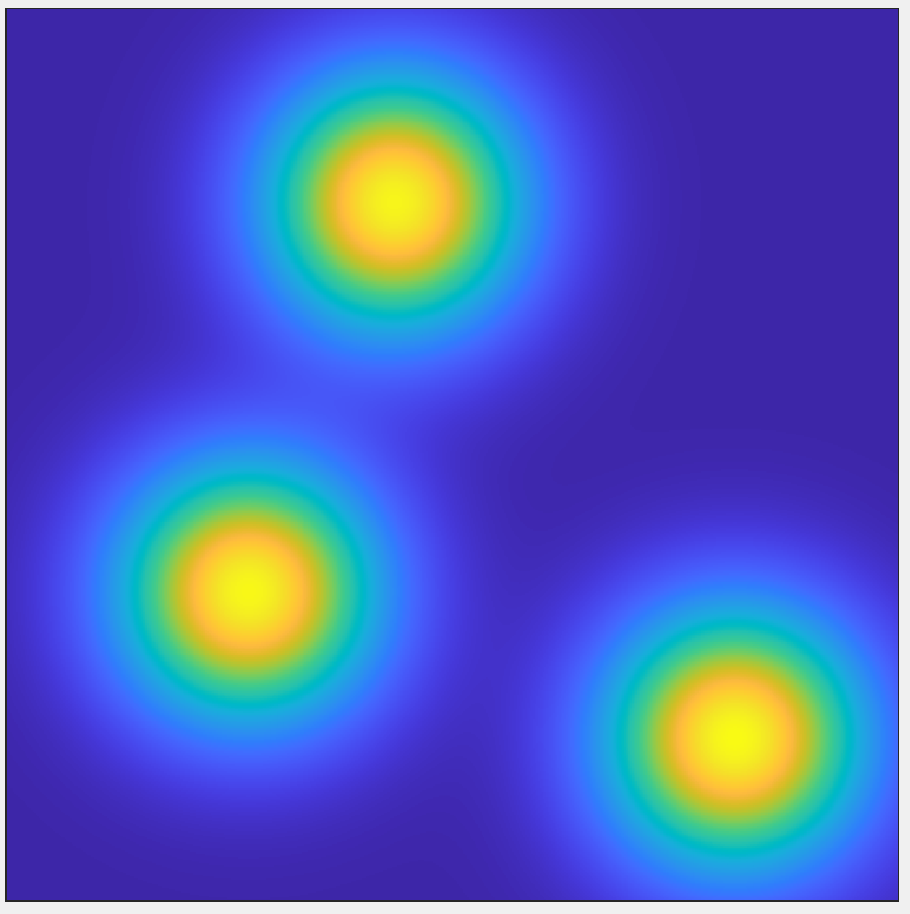}
	\end{subfigure}
	\hfill
	\begin{subfigure}[t]{0.32\textwidth}  
		\centering 
		\includegraphics[width=\textwidth]{smo10_quant_95_dobonf_0_peakno_all_asym.png}
	\end{subfigure}
	\hfill
	\begin{subfigure}[t]{0.32\textwidth}  
		\centering 
		\includegraphics[width=\textwidth]{smo10_quant_95_dobonf_0_peakno_all_ratio.png}
	\end{subfigure}
	\vskip\baselineskip
\begin{subfigure}[t]{0.26\textwidth}
	\centering
	\includegraphics[width=\textwidth]{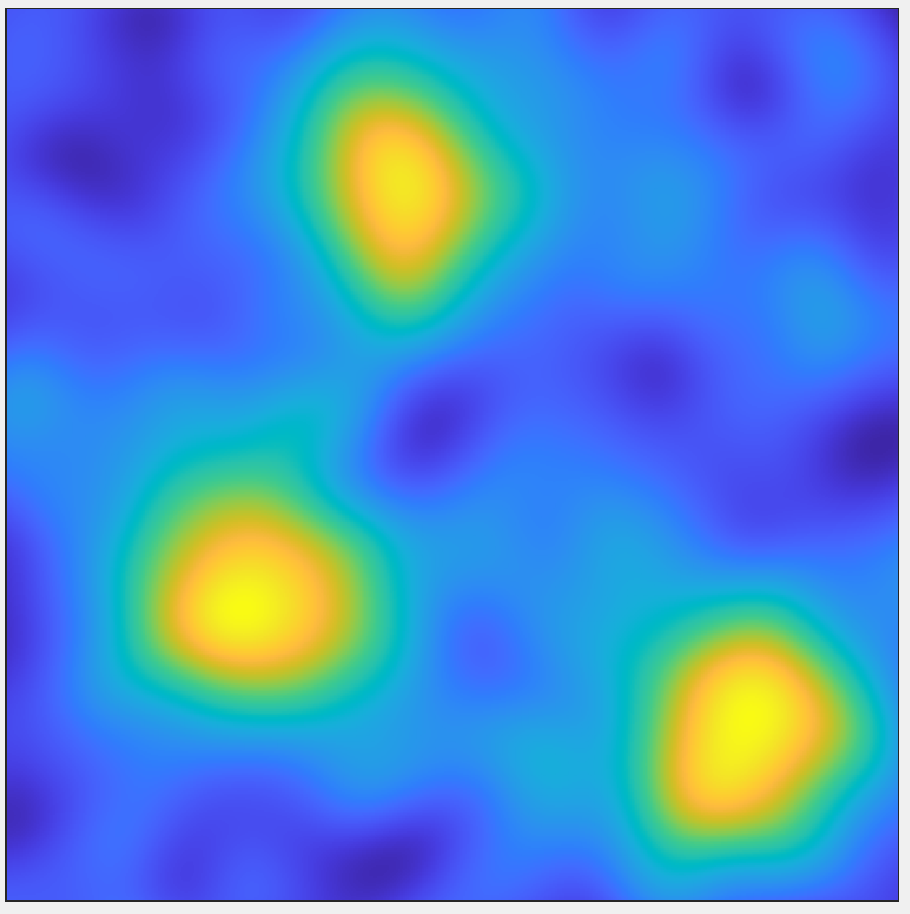}
\end{subfigure}
\hfill
\begin{subfigure}[t]{0.32\textwidth}  
	\centering 
	\includegraphics[width=\textwidth]{smo10_quant_95_dobonf_1_asym.png}
\end{subfigure}
\hfill
\begin{subfigure}[t]{0.32\textwidth}  
	\centering 
	\includegraphics[width=\textwidth]{smo10_quant_95_dobonf_1_ratio.png}
\end{subfigure}
	\caption{Coverage in 2D for the narrower peaks of the mean on a 50 by 50 image. Top Left: a surface plot giving the mean intensity of the signal. Bottom Left: a realisation of a single (FWHM = 5) random field. Centre: the coverage obtained from using the asymptotic distribution. Right: the coverage obtained from using the Monte Carlo distribution. The coverage for the asymptotic and Monte Carlo methods converges to 0.95 as the number of subjects increases. However the rate of convergence of the Monte Carlo approach is substantially faster.}\label{fig:2Disotropic}
\end{figure}

\begin{figure}[h!]
	\begin{subfigure}[t]{0.26\textwidth}
		\centering
		\includegraphics[width=\textwidth]{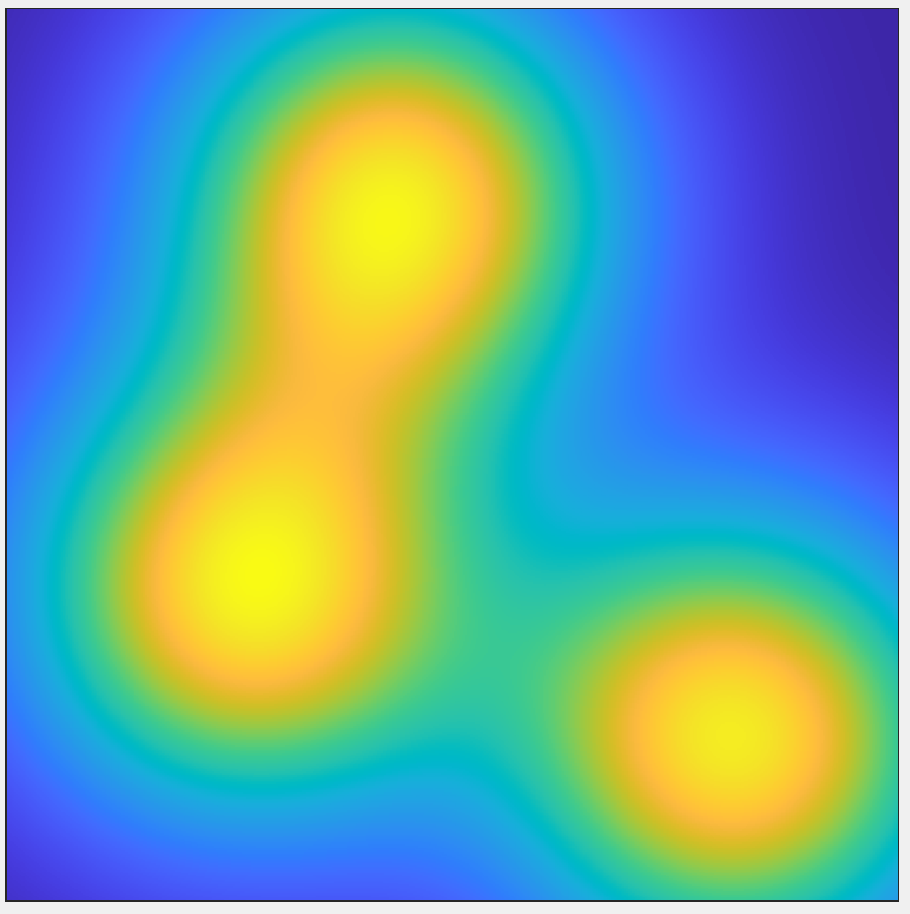}
	\end{subfigure}
	\hfill
	\begin{subfigure}[t]{0.32\textwidth}  
		\centering 
		\includegraphics[width=\textwidth]{smo19_quant_95_dobonf_0_peakno_all_asym.png}
	\end{subfigure}
	\hfill
	\begin{subfigure}[t]{0.32\textwidth}  
		\centering 
		\includegraphics[width=\textwidth]{smo19_quant_95_dobonf_0_peakno_all_ratio.png}
	\end{subfigure}
	\vskip\baselineskip
	\begin{subfigure}[t]{0.26\textwidth}
		\centering
		\includegraphics[width=\textwidth]{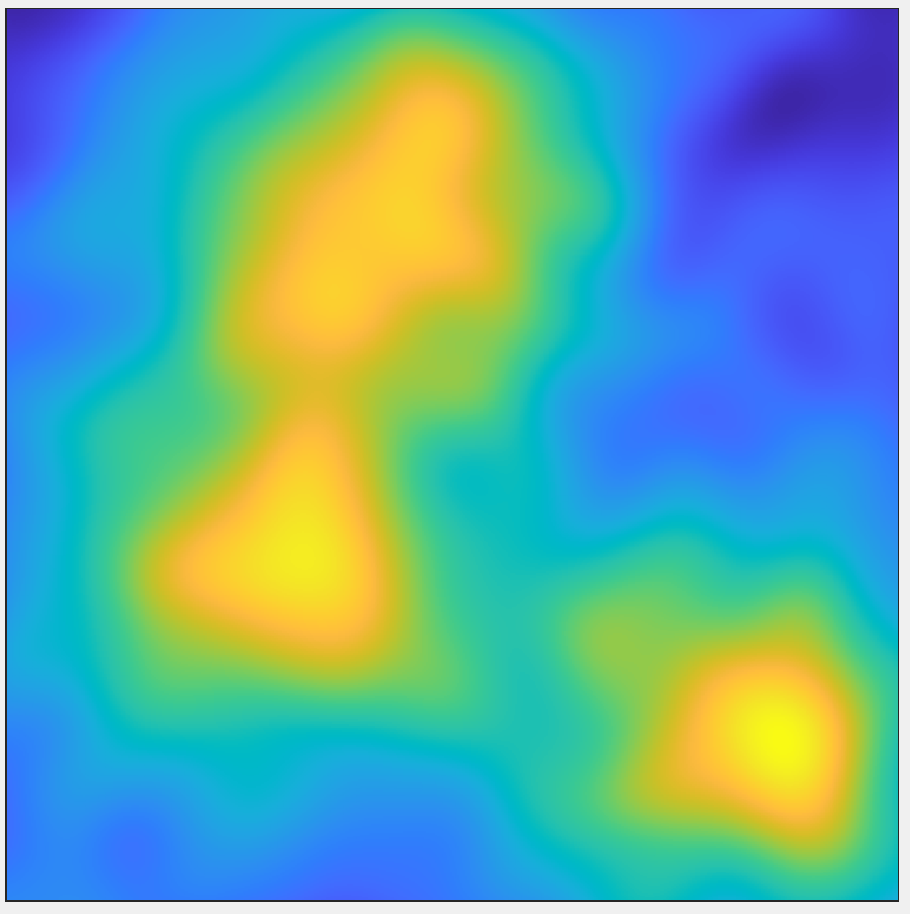}
	\end{subfigure}
	\hfill
	\begin{subfigure}[t]{0.32\textwidth}  
		\centering 
		\includegraphics[width=\textwidth]{smo19_quant_95_dobonf_1_asym.png}
	\end{subfigure}
	\hfill
	\begin{subfigure}[t]{0.32\textwidth}  
		\centering 
		\includegraphics[width=\textwidth]{smo19_quant_95_dobonf_1_ratio.png}
	\end{subfigure}
	\caption{Coverage in 2D for the wider peaks of the mean on a 50 by 50 image. The layout of the figures is the same as in Figure \ref{fig:2Disotropic}. In this more difficult setting the convergence of the asymptotic method is substantially slower than in the case of the narrower peaks. The Monte Carlo method maintains the nominal coverage even in small sample sizes however it is slightly conservative when considering the joint coverage.}\label{fig:2Disotropic19}
\end{figure}

The setting of Figure \ref{fig:2Disotropic19} is more difficult because the peaks are wider and the true maximum is more difficult to localise. In this setting the asymptotic method is very slow to converge. In contrast, the Monte Carlo method performs very well, attaining the nominal coverage even in small sample sizes. However the joint coverage is slightly conservative in this setting.





\subsection{Simulations: peaks of Cohen's $ d $}\label{SS:cdsim}
To illustrate the performance of our confidence regions for local maxima of the $ t $-statistic we use the same simulation settings as above however we infer on peaks of Cohen's $ d $ rather than the mean. For the $ t $-statistic, changing the variance across the image is equivalent to changing the mean so without loss of generality we take the variance to be constant across the image. The results are shown in Figures \ref{fig:2Dtstatiso} and \ref{fig:2Dtstat19} and from these we see that the correct coverage is obtained given sufficiently many subjects. Convergence is slower than for the mean and as before improves the higher the level of applied smoothness. Note that no Monte Carlo approach is available for Cohen's $ d. $ The results for the coverage about 1D peaks  (including results where the underlying noise distribution is non-Gaussian) are available in Section \ref{A:fscd} and show a similar trend.

\begin{figure}[h!]
	\begin{subfigure}[t]{0.26\textwidth}
		\centering
		\includegraphics[width=\textwidth]{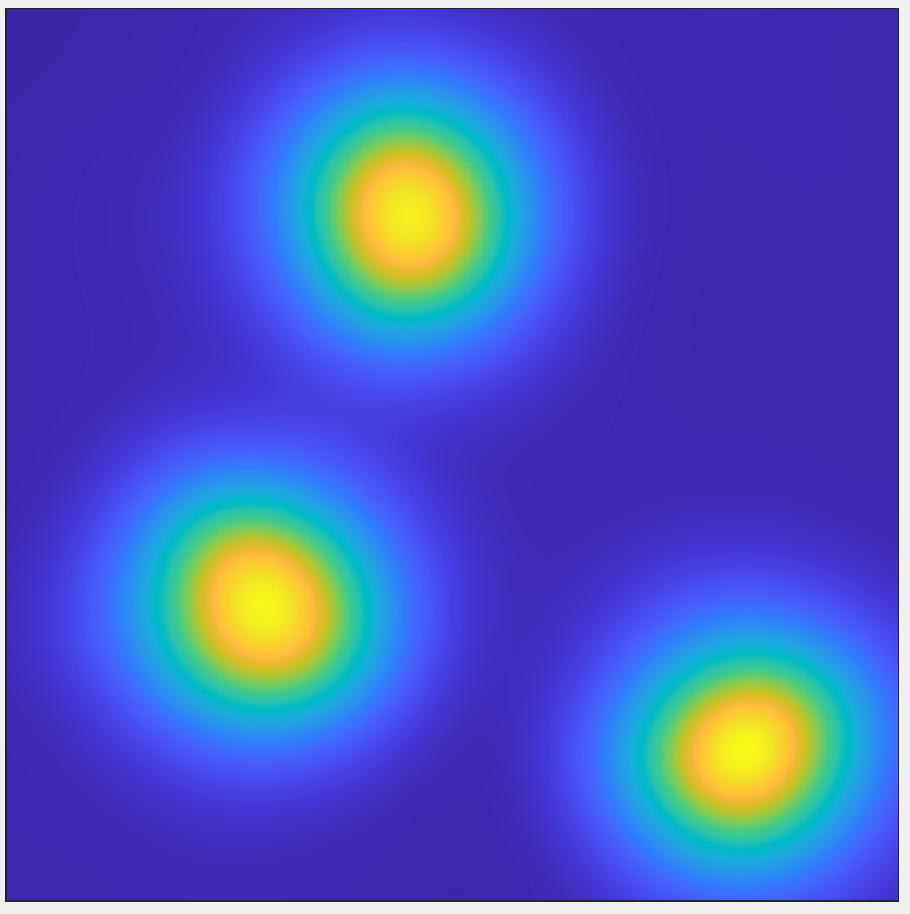}
	\end{subfigure}
	\hfill
	\begin{subfigure}[t]{0.32\textwidth}  
		\centering 
		\includegraphics[width=\textwidth]{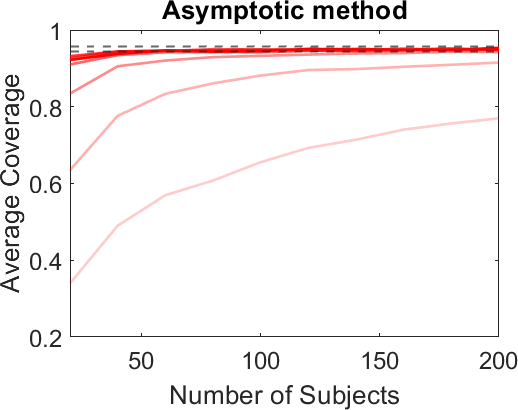}
	\end{subfigure}
	\hfill
	\begin{subfigure}[t]{0.32\textwidth}  
		\centering 
		\includegraphics[width=\textwidth]{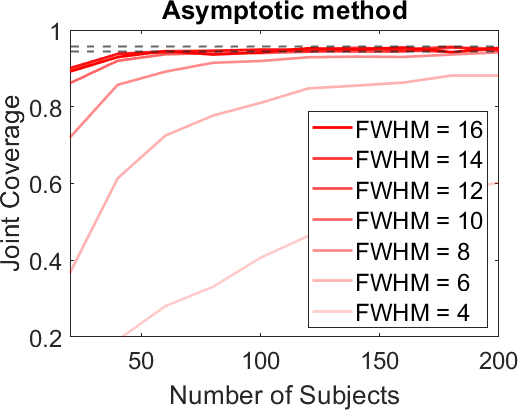}
	\end{subfigure}
	\caption{Coverage in 2D for the narrower peaks of Cohen's $ d $ on a 50 by 50 image. Left: a surface plot giving the observed Cohen's $ d $ for a random sample of 100 subjects whose noise is smoothed with a FWHM of 10. Centre: the coverage obtained from using the asymptotic distribution. Right: the coverage obtained from using the Monte Carlo distribution.}\label{fig:2Dtstatiso}
\end{figure}

\begin{figure}[h!]
	\begin{subfigure}[t]{0.26\textwidth}
		\centering
		\includegraphics[width=\textwidth]{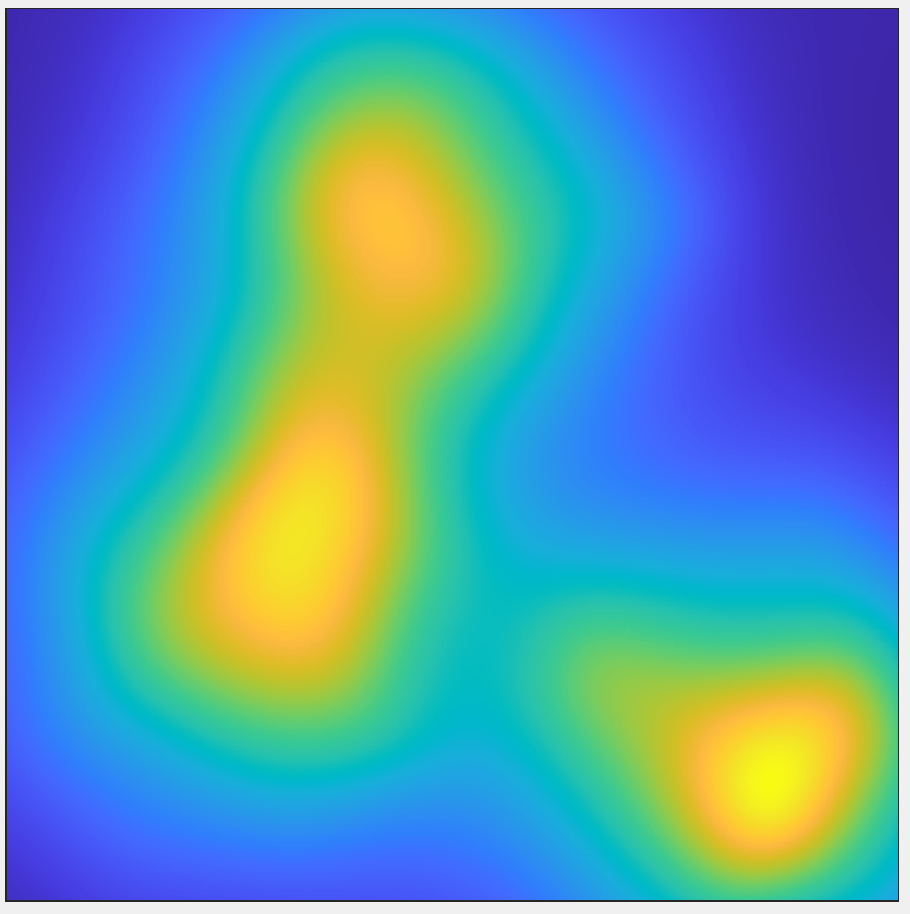}
	\end{subfigure}
	\hfill
	\begin{subfigure}[t]{0.32\textwidth}  
		\centering 
		\includegraphics[width=\textwidth]{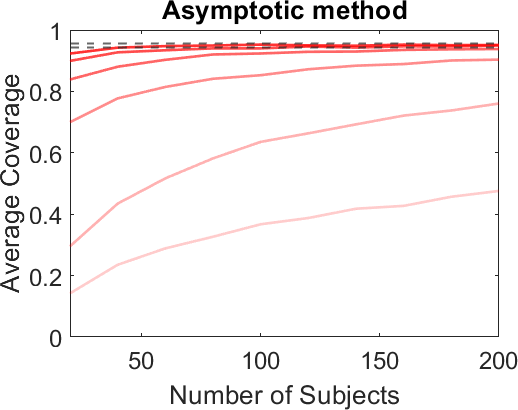}
	\end{subfigure}
	\hfill
	\begin{subfigure}[t]{0.32\textwidth}  
		\centering 
		\includegraphics[width=\textwidth]{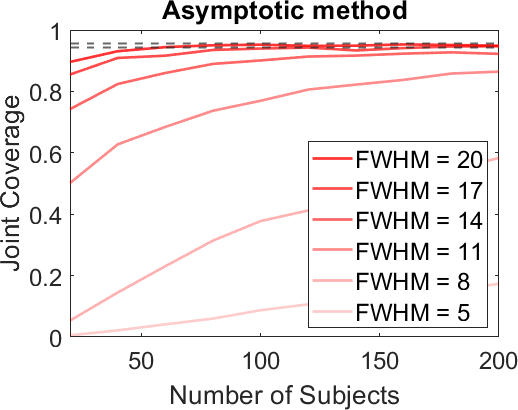}
	\end{subfigure}
	\caption{Coverage in 2D for the wider peaks of Cohen's $ d $ on a 50 by 50 image. The plots are laid out as in Figure \ref{fig:2Dtstatiso}. A higher level of smoothness is required for quick convergence than for the narrow peak setting.}\label{fig:2Dtstat19}
\end{figure}

\subsection{Application: MEG power spectra}\label{SS:MEGmeg}
We have 1D MEG data from 79 subjects (from a single MEG node) and for each subject have around 6 minutes of time series data sampled at a rate of 240Hz (see \cite{Quinn2019} for details on the data and how it was collected). From this data we derive power spectrum random fields using \cite{Welch1967}'s method (as described in Section \ref{SS:MEGspectra}), samples of which are shown in Figure \ref{fig:MEGmean}. Applying our asymptotic confidence regions approach with the Bonferroni correction, we obtain joint $ 95\% $ confidence intervals of (0.876,0.910) Hz and (2.276,2.314) Hz for the location of the highest two peaks of the mean. In this example the uncertainty for both peaks is the same as that provided by the stationary Monte Carlo method. This indicates that the methods have converged and so we expect the confidence interval to provide good coverage in this setting.

We can also infer on peaks of Cohen's $ d $ obtained from the log power spectra with respect to the average across the frequencies from 0 to 60 Hz. Using our approach, we calculate joint 95\% confidence intervals (0.870,0.916) Hz and (2.266,2.323) Hz, for the locations of the top two peaks which lie in the delta frequency band, this is illustrated in Figure \ref{fig:MEGtstat}. Since the noise is smooth relative to the shape of the peaks we expect these confidence intervals to give good coverage for the true peak locations. Notably the standard error for both confidence intervals is similar, this occurs because the peaks have a similar shape and the smoothness of the noise is similar around each peak.

\subsection{Application: fMRI}\label{SS:fmri}
We apply the method to estimate confidence regions for 2D coronal slices of fMRI data of 125 subjects from the UK biobank \citep{Miller2016}. The subjects have undergone a perceptual matching task, consisting of blocks of geometric shapes and emotional faces; the effect of interest is the difference in fMRI signal between the face and shape blocks \citep{Hariri2002}. A first level regression is performed, as described in \cite{Mumford2009}, resulting in a 2D contrast map for each subject, for the contrast of the ``faces" condition minus the "shapes" condition. Smoothing of 8mm (which corresponds to 4 voxels as the voxels are 2mm by 2mm) was performed to generate 2D convolution fields $ (Y_n) $ for each subject. In the resulting dataset the top two peaks of the mean lie within the fusiform face areas which are the regions of the brain that are responsible for identifying faces \citep{Kanwisher2006}. These peaks and the 95\% simultaneous confidence regions (derived using the Monte Carlo distribution combined with Bonferroni adjustment) for their location are displayed in Figure \ref{fig:fMRI}. The use of the Monte Carlo method requires stationarity, an assumption that is common in brain imaging \citep{Worsley1992}. From these plots we see that the regions obtained from the Monte Carlo method are substantially larger than the ones obtained via the asymptotic method. This occurs because, as illustrated in Section \ref{SS:meansim}, the asymptotic method underestimates the variance and thus provides undercoverage. Confidence regions for the local maxima of Cohen's $ d $, obtained using the asymptotic method are displayed in Figure \ref{fig:cdfMRI}.

\begin{figure}[h]
	\begin{center}
		\includegraphics[width=0.32\textwidth]{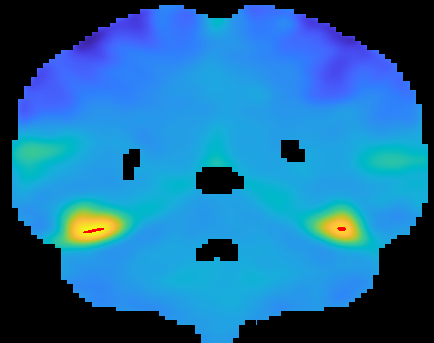}
		\includegraphics[width=0.32\textwidth]{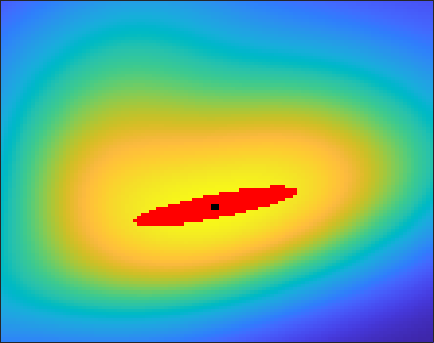}
		\includegraphics[width=0.32\textwidth]{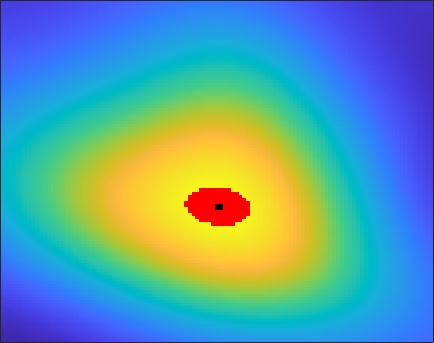}\\
		\vspace{0.25cm}
		\includegraphics[width=0.32\textwidth]{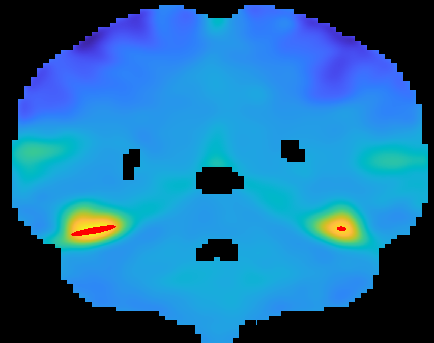}
		\includegraphics[width=0.32\textwidth]{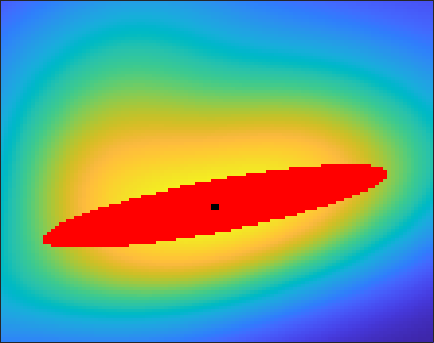}
		\includegraphics[width=0.32\textwidth]{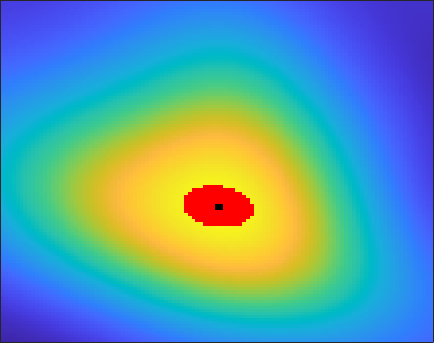}
	\end{center}
\caption{Simultaneous confidence regions for peaks of the mean of 125 subjects from the UK biobank for the faces-shapes contrast. Top: asymptotic method, bottom: Monte Carlo method. The $ 95\% $ confidence regions, corrected to allow for joint coverage over the two peaks, are shown in red: displayed over the mean of the images. Both sets of plots have the following interpretation: Left: whole brain slice. Middle and right: zoomed in sections around each peak with a black dot indicating the location of each maximum. The Monte Carlo regions are bigger than the asymptotic regions for both peaks, however this is more pronounced for the leftmost peak.}\label{fig:fMRI}
\end{figure}

\section{Discussion}
In this paper we have provided asymptotic and Monte Carlo confidence regions for the location of peaks of the mean and Cohen's $ d $ functions and have tested their coverage in a variety of settings as well as illustrating how they can be applied in practice. We have demonstrated that under stationarity, for peaks of the mean, using the Monte Carlo method can significantly improve the rate at which the empirical coverage converges.

The asymptotic results show that the limiting covariance matrix (for the mean) about the $ j $th peak (for $ 1 \leq j\leq J $) is $ (\nabla^2 \mu(\theta_j))^{-1}\Lambda(\theta_j)(\nabla^2 \mu(\theta_j))^{-1} $. This means that, for fixed sample size, the size of the confidence regions increases as the determinant of the Hessian $ \nabla^2\mu(\theta_j) $ goes to zero and as the smoothness of the noise (which is inversely proportional to $ \Lambda(\theta_j) $) decreases. In order provide some intuition regarding this, consider the following illustrative 1D example. Assume that the shape of the peak is that of the pdf of the $ \mathcal{N}(0, \tau^2) $ distribution (for some $\tau > 0$) scaled to have height $ h $ and that the noise is obtained by smoothing Gaussian white noise with a Gaussian kernel with FWHM $ f $ and scaled to have variance $ \sigma^2 $. In this case the limiting variance is proportional to $ \frac{\tau^4 \sigma^2}{h^2f^2} $. This clearly decreases as the FWHM of the noise and the height of the signal increase and increases with the variance of the noise and the width of the peak. 

We also found that wider peaks (relative to the noise) and rougher noise typically require a larger sample size $ N $ before the correct coverage is obtained. This is because the wider the peak and the rougher the noise, the more the location of the maximum will be driven by peaks in the noise rather than peaks in the signal. In general the shape of the peak and the smoothness of the noise have a large effect on both the limiting covariance and the rate at which convergence of the coverage occurs.

Using the asymptotic distribution to create confidence regions does not take account of the extra variance that arises from the inverse of the Hessian in equation \eqref{eq:cltsort}. As such using the asymptotic distribution leads to undercoverage in the finite sample. Under stationarity, for peaks of the mean, we have shown that the coverage obtained can be significantly improved by using the Monte Carlo confidence regions. These are created by simulating from the joint distribution between the first and second derivatives in order to account for the variance of the Hessian and enable a good level of coverage to be obtained even in small sample sizes. Under stationarity the first derivative and the Hessian are independent and as such we should expect the Monte Carlo approach to lead to an increase in the coverage (see Theorem \ref{thm:AoverB} and the discussion following it in Appendix \ref{A:ratioexplan}). The improvement is exemplified at lower smoothness levels where the convergence for the asymptotic method is particularly slow. 

When the noise is non-stationary the parameters of the Monte Carlo distribution become more difficult to estimate and may not be the same at the location of the empirical peak and at the true peak. Moreover the derivative and the Hessian are no longer guaranteed to be independent meaning that the conditions needed to prove results such as Theorem \ref{thm:AoverB}  do not hold. Nevertheless it would be interesting to determine scenarios where the Monte Carlo distribution or a variant could lead to improvements even when the noise is non-stationary, such as under some sort of local stationarity. Further improvements could potentially be achieved by considering higher terms in the Taylor expansion and taking advantage of their joint distribution which would also be Gaussian (at least asymptotically). Such non-stationary improvements could be useful in the context of maximum likelihood estimation in the finite sample. 

The rate of convergence of the coverage for the asymptotic method, when inferring on peaks of Cohen's $ d $, is slower than when inferring on peaks of the mean. This is because the variability in the location of observed peaks of the $ t $-statistic about their true location is substantially higher than for peaks of the mean \citep{Taylor1993}. This means that the estimate of parameters at the true peak is in general less accurate and this has a knock on effect on the coverage rates. In the case that the fields are Gaussian, assuming they are also unit-variance for simplicity, we can quantify this approximately by considering the difference between the variance of the derivatives. To see this note that when the variance equals 1, $ \mu = d $ everywhere, and so $ \nabla^2 \mu = \nabla ^2 d $. As such  changes in the asymptotic variance when comparing convergence of the mean versus convergence of Cohen's $ d $ (i.e. in Theorems \ref{thm:meanclt} and \ref{thm:CDlocclt}) are due to changes in the variance of the derivatives of the noise. At a peak of the mean the variance of $ \sqrt{N}\nabla \hat{\mu}_N $ is $ \Lambda $ whereas, using Lemma \ref{lem:Tstatderiv}, at a peak of Cohen's $ d $ the variance of $ \sqrt{N}\nabla d_N$ is $ \Lambda\left( 1 + \frac{N}{N-1}d_N^2 \right) $. In our simulations, $ d_N \approx 1$ at the peaks, so the variance will approximately be increased by a factor of $ 2 $. In simulations where the noise is given by Gaussian noise convolved with a Gaussian kernel, $  \Lambda $ is inversely proportional to the FWHM squared. As such, in our simulations, we might expect the coverage rates for Cohen's $ d $ (for convolution fields generated by smoothing Gaussian noise with a Gaussian kernel with FWHM $ f $) to be roughly comparable to those of the mean with FWHM $ f/\sqrt{2} $. Comparing the Cohen's $ d $ and mean simulations in Section \ref{S:Sims} this is not quite correct, as there are a number of other factors involved, but this calculation nevertheless provides a justification for the level of the discrepancy.

Developing a corresponding Monte-Carlo distribution for the $ t $-statistic seems feasible in principle. In practice however this is difficult. It requires an understanding of the joint distribution between the first and second derivatives of a $ t $-statistic field. This is complicated and independence between these different derivatives does not hold as both contain terms involving the derivatives of the fields themselves, so it would not be possible to extend Theorem 1 to this setting (see Section 4 of the Supplementary Material for theoretical expansions of the first and second derivatives of Cohen's $ d $). Extending to this setting is thus beyond our scope and is left to future work.

It may also be of interest to develop non-parametric bootstrap style confidence regions. Consistency results for these have been developed in the context of M-estimation (see e.g.\cite{Wellner1996}), and it would be interesting to extend these results to multiple peaks and to $ t $-statistic fields. Future work could also investigate applying these techniques in larger dimensions and in other settings such as for fMRI data. It should also be possible to develop confidence regions for the locations of peaks of other random fields, such as $R^2$-fields, using similar techniques.

Our methods rely on identifiability of the peaks and we have shown that this occurs under reasonable assumptions, given large enough sample sizes. In noisy scenarios there might be more than one observed peak about a true peak of the signal or even none at all. Moreover, if two peaks were found near to each other then it might be difficult to distinguish them. In small sample sizes it may be difficult to know whether identifiability can be assumed to hold. One heuristic that seems reasonable (in 1D) is to assume that identifiability has occurred if the $ 95\% $ confidence interval (about a given peak) lies within the inflection points of the peak in the observed mean/Cohen's $ d $. Algorithms such as STEM \citep{Cheng2017a} can also be used in order to distinguish peaks of the noise from peaks of the signal. 

One interesting application of our results could be to improve coordinate based meta-analysis (see \cite{Eickhoff2009}, \cite{Salimi-Khorshidi2009}). These types of meta-analyses typically make use of a confidence region around the peaks reported across studies that represents a combination of within study and between study variation in the peak location. The within study variation is typically approximate and not theoretically justified. Our work enables confidence regions to be generated that have asymptotic theoretical coverage guarantees. Moreover we have shown that, for $ 0 < \alpha < 1 $, the volume of the $ (1-\alpha)\% $ confidence region shrinks at a rate of $ N^{D/2} $ as the sample size $ N $ increases. Using our results should thus allow practitioners to better account for the change in the size of the within study uncertainty as the sample size changes as well as enabling them to make more precise confidence statements and thus perform more exact meta-analyses.

\section*{Acknowledgments}
\if\blind0
We would like to thank Dr. Fabian Telschow at Humboldt University of Berlin for helpful discussions on Random Field Theory and convolution fields. We would also like to thank Dr. Dan Cheng at the University of Arizona for his help in understanding the proofs of \cite{Cheng2017a}. We are also grateful to Dr. Andrew Quinn at the University of Oxford for his help in understanding the the MEG datasets and for providing us with MEG data.

TEN is supported by the Wellcome Trust, 100309/Z/12/Z and SD was partially funded by the EPSRC. SD, AS and TEN were partially supported by NIH grant R01EB026859. The research was carried out under UK BioBank application \#34077, with bulk image data shared within Oxford (with UK BioBank permission) from application 8107.
\fi

\if\jasa1
\spacingset{1}
\fi

\bibliographystyle{abbrvnat}
\bibliography{../../../../../Citations/TomsPapers,../../../../../Citations/Kernels,../../../../../Citations/fMRI,../../../../../Citations/MachineLearning,../../../../../Citations/Theory,../../../../../Citations/Notes,../../../../../Citations/Optimization,../../../../../Citations/Statistics,../../../../../Citations/RFT,../../../../../Citations/extras}

\appendix
\section{Proofs for Sections 2 and 3}\label{S:proofs}
\subsection{Proofs for Section 2}
\subsubsection{Proof of Lemma \ref{lem:DEcond}}\label{A:DE}
Let $ e_i, i = 1 ,\dots, D $, be the standard basis vectors in $ \mathbb{R}^D $ and let $ L, B(s)$ be the Lipschitz constant and the ball around $ s $  on which the Lipschitz property holds. Then for $ 1 \leq i \leq D $ and $ 1 \leq j \leq D' $,
\begin{equation*}
\mathbb{E}\left[ (\nabla f)_{ij} \right] = \mathbb{E}\left[ \frac{\partial f_j(s)}{\partial s_i}\right]= \mathbb{E}\left[ \lim_{h \rightarrow 0 } \frac{f_j(s+he_i) - f_j(s)}{h} \right].
\end{equation*}
We can thus apply the Dominated Convergence Theorem to obtain the result, using $ L $ as a dominating function, since for $ h $ small enough such that $ s+he_i\in B(s), $
\begin{equation*}
\left| \frac{f_j(s+he_i) - f_j(s)}{h} \right| \leq \frac{\left\lVert f(s+he_i) - f(s) \right\rVert}{\left| h \right|}  \leq \frac{L\left\lVert he_i \right\rVert}{\left| h \right|} = L.
\end{equation*}

\subsubsection{Proof of Proposition \ref{prop:Gaussfin}}\label{A:Gaussfin}
$ S $ is a subset of $ \mathbb{R}^D $ and is therefore separable, moreover for each $ 1 \leq i \leq D, $ $ f_i $ is a continuous Gaussian random field. So it follows that for all $ m \in \mathbb{N} $, $ \mathbb{E}\left[ \sup_{s \in S}\left| f_i(s) \right|^{m} \right] < \infty $, see \cite{Landau1970}. Expanding the norm we see that:
\begin{align*}
\sup_{s \in S}  \left\lVert \nabla f(s) \right\rVert^2 = \sup_{s \in S} \sum_{i = 1}^D f_i(s)^2
= \sum_{i = 1}^D  \sup_{s \in S} f_i(s)^2
\end{align*}
so in particular its expectation is finite. Moreover, given $ k \in \mathbb{N}, $
\begin{equation*}
\sup_{s \in S}\left\lVert \nabla f^k(t) \right\rVert = \sup_{s \in S}\left\lVert kf(t)^{k-1}\nabla f(t) \right\rVert \leq k \sup_{s \in S} \left| f(t) \right|^{k-1}\sup_{s \in S}\left\lVert \nabla f(t) \right\rVert.
\end{equation*}
As such
\begin{equation*}
\mathbb{E}\left[ \sup_{s \in S}\left\lVert \nabla f^k(t) \right\rVert  \right] \leq k \mathbb{E} \left[ \sup_{s \in S} \left| f(t) \right|^{2(k-1)} \right]^{1/2}\mathbb{E}\left[\sup_{s \in S} \left\lVert \nabla f(t) \right\rVert^{2} \right]^{1/2}
\end{equation*}
which is finite. The result thus follows by applying Lemmas \ref{lem:supbound} and \ref{lem:DEcond}.
\subsubsection{Proof of Proposition \ref{prop:convfield}}\label{AA:convfieldproof}
Let $ c $ be the Lipshitz constant of $ K $. Then, for $ s, t \in S, $
\begin{align*}
\left| Y(s) - Y(t) \right| & \leq \sum_{l \in \mathcal{V}} \left| K(s-l) - K(t-l) \right|\left| X(l) \right|\\
& \leq c\sum_{l \in \mathcal{V}} \left| X(l) \right|\left\lVert s-t \right\rVert = \left( c\sum_{l \in \mathcal{V}} \left| X(l) \right| \right) \left\lVert s-t \right\rVert.
\end{align*}
The result follows by taking  $  c\sum_{l \in \mathcal{V}} \left| X(l) \right| $ as the Lipschitz constant and applying Lemma \ref{lem:DEcond}.

\subsubsection{Proof that fSLLNs hold for $ f^2 $}\label{AA:fSLLN}
Assumption $ \ref{ass:derivexch} $(c) implies that conditions for fSLLNs to hold are satisfied by $ f^2 $ and its derivatives. This will follows fairly quickly by \cite{Ledoux2013}'s Corollary 7.10. For $ f^2 $ it follows immediately from Assumption $ \ref{ass:derivexch} c(i) $. Differentiating $ f^2 $, we see that for $ 1 \leq j \leq D, $
\begin{equation*}
\frac{\partial f^2}{\partial s_j} = 2ff_j \implies \sup_{s \in S} \frac{\partial f(s)^2}{\partial s_j} \leq 2 \sup_{s \in S}f(s)\sup_{s \in S}f_j(s)
\end{equation*}
and so 
\begin{equation*}
\mathbb{E}\left[ \sup_{s \in S} \frac{\partial f(s)^2}{\partial s_j}  \right] \leq 2\mathbb{E}\left[ \sup_{s \in S}f(s)^2 \right]^{1/2}\mathbb{E}\left[ \sup_{s \in S}f_j(s)^2 \right]^{1/2}
\end{equation*}
so the results follows for the first derivative under  $ \ref{ass:derivexch}c (i)$ and $ (ii) $. Differentiating once more and applying Cauchy-Swartz the result follows for the second derivative as well (under Assumption \ref{ass:derivexch}c).

%
\subsubsection{Proof of Proposition \ref{prop:ass1conv}}\label{AA:ass1conv}
\ref{ass:derivexch}a follows easily as the lattice is finite, the variance is finite at each point  and $ K $ is $ C^2 $. For \ref{ass:derivexch}b,c note that, as $ K $ is $ C^2 $, we may assume that the absolute value of $ K(s-l) $ and its derivatives are bounded over all $ s \in S $ and $ l \in \mathcal{V} $ by compactness, as $ S $ is bounded and $ \mathcal{V}  $ is finite and $ K $ is defined everywhere. Let $ K^* $ be an upper bound on $\left| K \right| $, then for $ s \in S $,
\begin{equation*}
\left| Y(s) \right| = \left|\sum_{l \in \mathcal{V}} K(s-l)X(l)\right| \leq K^* \sum_{l \in \mathcal{V}}\left| X(l) \right|
\end{equation*}
and
\begin{align*}
\left| Y(s)^2 \right| &= \left| \sum_{l,l' \in \mathcal{V}} K(s-l)X(l)K(s-l')X(l')  \right|\\
&\leq \sum_{l,l' \in \mathcal{V}} \left| K(s-l)K(s-l') \right| \left| X(l)X(l') \right| \leq (K^*)^2 \sum_{l,l' \in \mathcal{V}}\left| X(l)X(l') \right|.
\end{align*}
so \ref{ass:derivexch}b,c follow for $ Y $. Similar arguments hold for its derivatives.

\subsection{Proofs for Section 3}
\subsubsection{Proof of Proposition \ref{lem:derivbound}}\label{AA:derivbound}
\begin{equation*}
\left\lVert \nabla \hat{\gamma}(t) \right\rVert = 
\left\lVert \nabla \gamma(t) + \nabla \eta(t) \right\rVert 
\geq \left\lVert \nabla \gamma(t) \right\rVert - \left\lVert  \nabla \eta(t) \right\rVert \geq 
C' +
\inf_{t \in S'}\left( - \left\lVert  \nabla \eta(t) \right\rVert \right)\text{ and so}
\end{equation*}
\begin{equation*}
\mathbb{P}(\inf_{t \in S'} \left\lVert \nabla \hat{\gamma}(t)  \right\rVert > 0 )
\geq \mathbb{P}\left( \sup_{t \in S'}\left\lVert \nabla \eta(t) \right\rVert < C'\right) = 1 - \mathbb{P}\left( \sup_{t \in S'}\left\lVert \nabla \eta(t) \right\rVert > C'\right).
\end{equation*}

\subsubsection{Proof of Proposition \ref{prop:peakconv}}\label{proof:peakconv}
The probability that $ \hat{\gamma}_N $ has no critical points in $ S \setminus B $ is greater than the probablity that $ \inf_{t \in S \setminus B } \left\lVert \nabla \hat{\gamma}_N(t)  \right\rVert > 0 $ and by Lemma \ref{lem:derivbound},
\begin{equation*}
\mathbb{P}\left( \inf_{t \in S \setminus B } \left\lVert \nabla \hat{\gamma}_N(t)  \right\rVert > 0  \right)
\geq 
1- \mathbb{P}\left(  \sup_{t \in S \setminus B }\left\lVert \nabla \eta_N(t) \right\rVert > C  \right).
\end{equation*}
By the continuous mapping theorem, $ \nabla \eta_N \convup 0$ implies that $ \left\lVert  \nabla \eta_N \right\rVert  \convup 0 $. We have
\begin{equation*}
\sup_{t \in S \setminus B } \left\lVert \nabla \eta_N(t) \right\rVert  \leq \sup_{t \in S} \left\lVert \nabla \eta_N(t) \right\rVert 
\end{equation*} 
so in particular, 
\begin{equation*}
\sup_{t \in S \setminus B } \left\lVert \nabla \eta_N(t) \right\rVert \convp 0
\end{equation*} 
from which the first result follows. As in \cite{Cheng2017a}, the probability that $ \hat{\gamma}_N$ has no maxima is less than or equal to 
\begin{equation*}
\mathbb{P}\left(\sup_{t \in S \setminus B }\left\lVert \nabla \eta_N(t)  \right\rVert > \delta_jD_{\max}\right)
\end{equation*} 
which tends to 0 by a similar argument to above. Note that the argument in \cite{Cheng2017a} requires an application of the Fundamental Theorem of Calculus to the second derivative which is why we require that the second derivatives of $ \gamma $ and $ \eta_N $ are continuous. The probability of 2 or more local maxima is
\begin{equation*}
\mathbb{P}\left(\sup_{t \in S \setminus B }\sup_{\left\lVert x \right\rVert = 1} x^T\nabla^2 \eta_N(t) x > D_{\max}\right)
\end{equation*}
which tends to 0, since 

\begin{equation*}
\sup_{t \in S \setminus B }\sup_{\left\lVert x \right\rVert = 1} x^T\nabla^2 \eta_N(t) x \leq 	\sup_{t \in S \setminus B }\left\lVert \nabla^2 \eta_N(t) \right\rVert \convas 0.
\end{equation*}
Combining these last two convergences implies that the probability that there is exactly one maximum converges to 1.

\newpage
\section{Proofs for Section 4}
\subsection{Conditions for convergence}
\begin{lem}\label{lem:pointwisesigma2}
	Given $a_1, \dots, a_N, \mu \in \mathbb{R}$ for some $ N \in \mathbb{N} $ and letting $ \mybar{a} = \frac{1}{N}\sum_{n = 1}^N a_n$,
	\begin{align*}
	\frac{1}{N}\sum_{n = 1}^N\left(a_n- \mybar{a}\right)^2 &= \frac{1}{N}\sum_{n = 1}^N\left(a_n - \mu + \mu - \mybar{a}\right)^2\\
	&= \frac{1}{N}\sum_{n = 1}^N(a_n - \mu)^2 + \frac{2}{N}\sum_{n = 1}^N\left(\mu - \mybar{a}\right)(a_n - \mu)
	+ \frac{1}{N}\sum_{n = 1}^N\left(\mu - \mybar{a}\right)^2\\
	&=\frac{1}{N}\sum_{n = 1}^N(a_n - \mu)^2 + \left(\mu - \mybar{a}\right)\frac{2}{N}\sum_{n = 1}^N(a_n - \mu) + \left(\mu - \mybar{a}\right)^2.
	\end{align*}
\end{lem}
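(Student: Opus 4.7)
The plan is to establish this identity by the familiar add-and-subtract trick, treating it as pure algebra with no probabilistic content. The first step is simply to write $a_n - \bar{a} = (a_n - \mu) + (\mu - \bar{a})$ for each $n$, which is a trivial rearrangement since adding and subtracting $\mu$ does nothing. Substituting into $(a_n - \bar{a})^2$ and expanding the binomial produces three terms: $(a_n - \mu)^2$, a cross term $2(\mu - \bar{a})(a_n - \mu)$, and $(\mu - \bar{a})^2$.

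Next I would sum over $n$ from $1$ to $N$ and multiply by $1/N$, using linearity of the finite sum. The key observation is that $\mu - \bar{a}$ does not depend on the summation index $n$, so it factors out of the last two sums. The cross term becomes $2(\mu - \bar{a}) \cdot \frac{1}{N}\sum_{n=1}^N (a_n - \mu)$, and the final term contributes simply $(\mu - \bar{a})^2$, because averaging $N$ identical copies of a constant returns that constant. Collecting these three pieces produces exactly the right-hand side of the claimed identity.

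There is no genuine obstacle here: the lemma is an immediate consequence of the binomial expansion together with linearity of summation, and the proof is the single display of three lines shown in the statement. One could note in passing that $\frac{1}{N}\sum_n (a_n - \mu) = \bar{a} - \mu$, which would allow the right-hand side to collapse to $\frac{1}{N}\sum_n (a_n - \mu)^2 - (\mu - \bar{a})^2$, recovering the standard bias-variance style decomposition. However, the author evidently prefers the un-simplified form since it is the most directly useful expansion for later manipulations of $\hat{\sigma}_N^2$ in the convergence arguments for $t$-statistic fields.
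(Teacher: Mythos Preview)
Your proposal is correct and matches the paper exactly: the lemma is stated as a self-contained algebraic computation with no separate proof, and the three displayed lines are precisely the add-and-subtract expansion you describe. Your additional remark about collapsing to $\frac{1}{N}\sum_n (a_n - \mu)^2 - (\mu - \bar a)^2$ is accurate but, as you note, not used in the paper.
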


\begin{lem}\label{lem:inverse}
	In the setting of Section \ref{SS:LM} let $ g $ be the density of $ x_1 $ and suppose that $ g $ is bounded above by $ G \in \mathbb{R}$ then
	$ \left( \frac{1}{N} X_N^TX_N \right)^{-1} \convas (\mathbb{E}\left[ x_1x_1^T \right])^{-1} $.
\end{lem}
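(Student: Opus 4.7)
The plan is to combine a componentwise strong law of large numbers with the continuity of matrix inversion, using the density assumption only to guarantee that the limiting matrix is invertible so that the continuous mapping step is justified.

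First, I would note that $\frac{1}{N}X_N^T X_N = \frac{1}{N}\sum_{n=1}^N x_n x_n^T$, and since the $x_n$ are i.i.d.\ (this is presumably part of the linear model setup in Section \ref{SS:LM}) with $\mathbb{E}[x_1 x_1^T]$ finite (otherwise the conclusion is vacuous), the classical SLLN applied entrywise to the symmetric matrix gives
\begin{equation*}
\frac{1}{N}X_N^T X_N \convas \mathbb{E}[x_1 x_1^T].
\end{equation*}

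The key step is then to show that $\mathbb{E}[x_1 x_1^T]$ is positive definite, so that it lies in the open set of invertible matrices on which inversion is continuous. For any nonzero $v \in \mathbb{R}^p$, the hyperplane $H_v = \{x : v^T x = 0\}$ has Lebesgue measure zero, so by the boundedness assumption on the density,
\begin{equation*}
\mathbb{P}(v^T x_1 = 0) = \int_{H_v} g(x)\,dx \leq G \cdot \mathrm{Leb}(H_v) = 0.
\end{equation*}
Hence $v^T x_1 \neq 0$ almost surely, which gives $v^T \mathbb{E}[x_1 x_1^T] v = \mathbb{E}[(v^T x_1)^2] > 0$. Since $v$ was arbitrary, $\mathbb{E}[x_1 x_1^T]$ is positive definite and therefore invertible.

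Finally, matrix inversion is continuous on the open subset of $\mathrm{GL}_p(\mathbb{R})$ of invertible matrices. Combining almost sure convergence of $\frac{1}{N}X_N^T X_N$ to an invertible limit with the continuous mapping theorem yields
\begin{equation*}
\left(\frac{1}{N}X_N^T X_N\right)^{-1} \convas \big(\mathbb{E}[x_1 x_1^T]\big)^{-1},
\end{equation*}
where the inverse on the left is well defined for all sufficiently large $N$ on a probability-one event. The only delicate piece is the invertibility argument; once positive definiteness is in hand, the remaining steps are routine applications of the SLLN and continuous mapping.
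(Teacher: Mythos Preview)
Your proof is correct and follows the same overall skeleton as the paper: apply the strong law to $\frac{1}{N}\sum_n x_n x_n^T$ and then the continuous mapping theorem to matrix inversion. The one genuine difference is where the bounded-density hypothesis is deployed. The paper uses it to argue that, for each fixed $N$, the event $\{\det(X_N^T X_N)=0\}$ has probability zero (as a lower-dimensional algebraic set under an absolutely continuous law), and hence by countable additivity $\frac{1}{N}X_N^T X_N$ is almost surely invertible for every $N$. You instead use the density bound to show that the \emph{limit} $\mathbb{E}[x_1 x_1^T]$ is positive definite, and then let continuous mapping give invertibility only eventually.

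Both routes are valid. Your version is a bit more economical and avoids the finite-$N$ measure-zero argument; note also that in the paper's linear-model setup positive definiteness of $\cov(x_1)$ is already assumed, which by itself forces $\mathbb{E}[x_1 x_1^T]=\cov(x_1)+\mathbb{E}[x_1]\mathbb{E}[x_1]^T$ to be positive definite, so your hyperplane argument, while correct, is not strictly needed there. The paper's version buys the slightly stronger statement that the inverse exists for every $N$ (at least once $N\geq p$), not just eventually, which can be convenient when one wants to write $(\frac{1}{N}X_N^T X_N)^{-1}$ without qualification.
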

\begin{proof}
	Define the event $ E_N = \left\lbrace X \in \mathbb{R}^{N \times p}:\det(X^TX) = 0 \right\rbrace $ then, as $ x_1, \dots, x_N $ are independent,
	\begin{equation*}
	\mathbb{P}\left( X_N \in E_N \right) = \int_{E_N} \prod_{n = 1}^N g(x_n) \,dx_n \leq G^N \int_{E_N} \,\prod_{n = 1}^Ndx_n = 0.
	\end{equation*}
	since $ E_N $ traces out a lower dimensional subspace. So by countability (as the countable union of measure zero sets has measure zero) we can almost surely assume that $ \frac{1}{N}X_N^TX_N $ is invertible for all $ N. $ Now, \begin{equation*}
	\frac{1}{N}X_N^TX_N = \frac{1}{N}\sum_{n = 1}^N x_nx_n^T
	\end{equation*} 
	which converges to $ \mathbb{E}\left[ x_1x_1^T \right] $ by the SLLN (note that if $ x_1 $ is mean zero then this equals $ \cov(x_1) $). As such the result follows by applying the continuous mapping theorem.
\end{proof}
%

\subsection{Continuation of the proof of Theorem \ref{thm:meanclt}}\label{SS:meanthm}
\noindent In what follows we verify that Assumptions CF and EE2 of \cite{Xiaoxia} hold in our setting. Our result then follows by applying their Theorem 4.1.

\begin{enumerate}[label=\roman*)]
	\item For each $ 1 \leq j \leq J, $ $ B_j $ is compact and $ \hat{\mu}_N = \frac{1}{N}\sum_{n = 1}^N Y_n $ is continuous, pointwise measurable and converges uniformly almost surely to $ \mu $ as shown in Section \ref{SS:MC2}. 
	Furthermore, $ \argmax_{t \in B_j} \mu(t) = \theta_j $ is the unique maximum  (or minimum if $ \theta_j $ is a minimum), so it follows that $ \hat{\theta}_{j,N} \convp \theta_j$ by Theorem 4.1.1 from \cite{Amemiya1985}. In particular $ \bf{\hat{\theta}_N} \convp \bf{\theta} $.
	\item For $ N \in \mathbb{N} $, $ \hat{\mu}_N $ is a.s. twice continuously differentiable on $ B_j $. Twice differentiable is required throughout and $ C^2 $ is required for Taylor's theorem.
	\item For each $ 1 \leq j \leq J$, $ \cov(\nabla^T Y_1(\theta_j)) < \infty $ by assumption. By derivative exchangeability, which follows from Assumption \ref{ass:derivexch}bii, $ \mathbb{E}\left[ \nabla^TY_1(\theta_j) \right] = \nabla^T \mu(\theta_j) = 0 $. By Cauchy-Swartz, for $ 1\leq i,j\leq J, $ the cross-covariances: $ \cov(\nabla^TY_1(\theta_i),\nabla^TY_1(\theta_j)) $ are also finite.  As such
	\begin{equation*}
	\sqrt{N}\pmatrix{\nabla^T \hat{\mu}_N(\theta_1);\vdots;\nabla^T \hat{\mu}_N(\theta_J)} = \frac{1}{\sqrt{N}}\sum_{n = 1}^N \pmatrix{\nabla^T Y_n(\theta_1);\vdots;\nabla^T Y_n(\theta_J)} \convd \mathcal{N}\left( 0, \bf{\Lambda} \right).
	\end{equation*} 
	by the multivariate CLT.
	\item For each $ 1 \leq j \leq J, $ given a sequence $ \theta^*_{j,N} \convp \theta_j $,
	\begin{align*}
	\nabla^2\hat{\mu}_N(\theta^*_{j,N}) = \nabla^2\hat{\mu}_N(\theta^*_{j,N}) - \nabla^2\mu(\theta^*_{j,N}) + \nabla^2\mu(\theta^*_{j,N}) - \nabla^2\mu(\theta_{j}) + \nabla^2\mu(\theta_{j})
	\end{align*}  
	which converges in probability to $ \nabla^2 \mu(\theta_j)$. This follows as by the fSLLN (which we can apply to the second derivative because of Assumption \ref{ass:derivexch}biii),
	\begin{equation*}
	\nabla^2\hat{\mu}_N(\theta^*_{j,N}) - \nabla^2\mu(\theta^*_{j,N})\convas 0.
	\end{equation*}
	Now, $ \nabla^2 \mu $ is uniformly continuous on $ B_j $ (as $ \mu $ is $ C^2 $) and so $\nabla^2\mu(\theta^*_{j,N}) - \nabla^2\mu(\theta_{j})\convp 0$ (since $ \hat{\theta}_{j,N} \convp \theta_j$ implies that $ \theta^*_{j,N} \convp \theta_j$). In particular $ \bf{A_N} \convp \bf{A} $.
\end{enumerate}
\vspace{0.35cm}
Combining (iii) and (iv) into \eqref{eq:cltsort} the result follows by applying Slutsky's Lemma.

\subsection{Proof of Theorem \ref{thm:lmmean}}\label{AA:lmmean}
This result follows via a similar proof to that of Theorem \ref{thm:meanclt} and requires us to demonstrate uniform convergence of the second derivative and to provide a pointwise CLT for the derivative. Uniform convergence follows from Proposition \ref{prop:lmconv} which implies that
\begin{equation*}
\nabla^2 w^T\hat\beta_N(\theta_j) \underset{N\rightarrow\infty}{\convuas} w^T\nabla^2 \beta(\theta_j)
\end{equation*}
for $ 1\leq j \leq J $. To prove the pointwise CLT, note that for all $ s \in S,$ 
\begin{align*}
&\sqrt{N} \nabla w^T(\hat\beta_N(s) - \beta(s)) = w^T\left( \frac{X_N^TX_N}{N} \right)^{-1} \frac{1}{\sqrt{N}}\sum_{n = 1}^N x_n \nabla \epsilon_n(s)\\
&= w^T\mathbb{E}\left[ x_1x_1^T \right]^{-1} \frac{1}{\sqrt{N}}\sum_{n = 1}^N x_n \nabla \epsilon_n(s) + w^T\left( \left( \frac{X_N^TX_N}{N} \right)^{-1} -  \mathbb{E}\left[ x_1x_1^T \right]^{-1}\right) \frac{1}{\sqrt{N}}\sum_{n = 1}^N x_n \nabla \epsilon_n(s).
\end{align*}
By Lemma \ref{lem:inverse}, $ \left( \frac{1}{N}X_N^TX_N \right)^{-1} $ converges almost surely to $ (\mathbb{E}\left[ x_1x_1^T \right])^{-1} $ and using the CLT it follows that so the second term converges to zero in distribution. Applying the multivariate CLT it follows that,
\begin{equation*}
\sqrt{N}\left( \nabla w^T \hat\beta(\theta_1), \dots, \nabla w^T \hat\beta(\theta_J) \right)^T \convd \mathcal{N}(0, \bf{\Lambda}).
\end{equation*}
Combining these results with the argument used in Theorem \ref{thm:meanclt} completes the proof.

\subsection{Proofs for Section \ref{SS:CD}}

\subsubsection{Proof of Lemma \ref{lem:Tstatderiv}}
\begin{proof}
	Define $ Z_N, V_N $ as in equation (\ref{eq:Tdef}) and note that for ease of notation we will drop the dependence on $ s $ in what follows. Differentiating $ T_N $, we have,
	\begin{align*}
	\nabla T_N &= \left(\frac{N-1}{V_N}\right)^{1/2}\left(\sqrt{N}\nabla \mu + \nabla Z_N\right) - \frac{\sqrt{N}\mu + Z_N}{2V_N^{3/2}/\sqrt{N-1}}\nabla V_N \text{ and so }\\
	\nabla T_N|Z_N, V_N &\overset{d}{=} \left(\frac{N-1}{V_N}\right)^{1/2}\left(\sqrt{N}\nabla \mu + z_X\right) - \frac{\sqrt{N}\mu+ Z_N}{2V_N^{3/2}/\sqrt{N-1}}2V_N^{1/2}z_V
	\end{align*} 
	where $ z_X $ and $ z_V \iid \mathcal{N}(0, \Lambda) $. Note that the equality in distribution follows by Lemma 3.2 of \cite{Worsley1994} which takes advantage of the independence between a constant variance Gaussian random field and its derivative and the fact that the square of the fields satisfy the DE condition (see Proposition \ref{prop:Gaussfin}). See Appendix \ref{A:chi2deriv} for a generalization of Worsley's result to non-stationary $ \chi^2 $ random fields. $ z_X $ and $ z_V $ are independent of $ Z_N $ and $ V_N $ since the former are functions of the derivatives and the later are functions of the component fields. Thus
	\begin{align*}
	\nabla T_N|Z_N, V_N &
	\overset{d}{=}  \left(\frac{N-1}{V_N}\right)^{1/2}\left(\sqrt{N}\nabla \mu + \mathcal{N}(0,\Lambda) + \frac{\sqrt{N}\mu+ Z_N}{\sqrt{V_N}}\mathcal{N}(0, \Lambda)\right) \\
	&\overset{d}{=}  \left(\frac{N-1}{V_N}\right)^{1/2}N\left(\sqrt{N}\nabla \mu , \left(1+\frac{(\sqrt{N}\mu+  Z_N)^2}{V_N}\right)\Lambda\right)\\
	&\overset{d}{=} \left(\frac{N-1}{V_N}\right)^{1/2}N\left(\sqrt{N}\nabla \mu, \left(1+\frac{T_N^2}{N -1}\right)\Lambda\right).
	\end{align*} 
	Since conditioning on $ Z_N $ and $ V_N $ is equivalent to conditioning on $ \hat{\sigma}_N $ and $ T_N $, the result follows.
\end{proof}
\noindent In particular, if the fields are Gaussian and have unit variance, it follows that we have the following pointwise CLT for Cohen's $ d $:
\begin{equation*}
\sqrt{N}\left(\nabla d_N -\frac{\nabla \mu}{\hat{\sigma}_N}\right)= \nabla T_N - \frac{\nabla \mu\sqrt{N}}{\hat{\sigma}_N}\underset{N\rightarrow \infty}{\convd} \mathcal{N}\left(0, \left( 1+ \mu^2 \right)\Lambda\right).
\end{equation*} 
This follows since $ \hat{\sigma}_N \convas \sigma $ and $ \frac{T_N^2}{N-1}\conv \mu^2 $ as $ N \rightarrow \infty $. Let us now drop the constant variance condition and write
\begin{align}\label{eq:modT}
T_N &= 
\frac{\frac{1}{\sqrt{N}}\sum_{n = 1}^N Y_n}{\left( \frac{1}{N-1}\sum_{n = 1}^N (Y_n - \frac{1}{N}\sum Y_n)^2 \right)^{1/2}}\\ 
&= \frac{\frac{1}{\sqrt{N}}\sum_{n = 1}^N Y_n/\sigma}{\left( \frac{1}{N-1}\sum_{n = 1}^N (Y_n/\sigma - \frac{1}{N}\sum Y_n/\sigma)^2 \right)^{1/2}}
\end{align}
which is the $ t $-statistic derived from component Gaussian random fields $ Y_n' = Y_n/\sigma $, $ n = 1, \dots, N, $ which are i.i.d and have constant variance 1. We can thus apply the constant variance result to yield the following corollary.

\subsubsection{Proof of Lemma \ref{cor:Tstatderiv}}\label{AA:Tstatderiv2}
Applying Lemma \ref{lem:Tstatderiv} to the $ t $-statistic from equation \eqref{eq:modT} we obtain the distributional result with $ \Lambda'(s) = \cov\left(\nabla^T \frac{Y_1(s)}{\sigma(s)}\right)  $. Dropping dependence on $ s $ and expanding,
\begin{align*}
&\cov\left(\nabla^T \frac{Y_1}{\sigma}\right) 
= \displaystyle\mathbb{E}\left[\left(\frac{\nabla^T Y_1}{\sigma} - \frac{Y_1}{2\sigma^3}\nabla^T \sigma^2\right)\left(\frac{\nabla^T Y_1}{\sigma} - \frac{Y_1}{2\sigma^3}\nabla\sigma^2\right)^T\right] \\
&\hspace{1cm}=\frac{\mathbb{E}\left[ (\nabla Y_1)^T\nabla Y_1  \right]}{\sigma^2} - 
\frac{\nabla^T \sigma^2 \mathbb{E}\left[ Y_1(\nabla Y_1) \right]}{2\sigma^4} - 
\frac{\mathbb{E}\left[ (\nabla Y_1)^T(Y_1 \nabla \sigma^2)\right]}{2\sigma^4} 
+ \frac{\mathbb{E}\left[ Y_1^2 \right](\nabla \sigma^2)^T\nabla \sigma^2 }{4\sigma^6}\\
&\hspace{1cm}=\frac{\Lambda}{\sigma^2} - \frac{\nabla^T \sigma^2 \Gamma}{\sigma^4} + \frac{(\nabla \sigma^2)^T\nabla \sigma^2}{4\sigma^4}.
\end{align*} 

\subsubsection{Statement and proof of Lemma \ref{lem:derivclt}}\label{AA:derivcltprep}
To do so we first require the following lemma.
\begin{lem}\label{lem:derivclt}
	Assume that $ Y_1 $ is unit variance, satisfies the DE condition and that $ \Lambda(s) < \infty $ for all $ s \in S $. Then for all $ J \in \mathbb{N} $ and $ s_1, \dots, s_J \in S, $
	\begin{align*}
	\left(\nabla^T Z_N(s_1), \nabla^T V_N(s_1)/\sqrt{N},\cdots,
	\nabla^T Z_N(s_J), \nabla^T V_N(s_J)/\sqrt{N}\right)^T
	\end{align*} 
	(where $ Z_N $ and $ V_N $ are defined as in \eqref{eq:Tdef}) satisfies a CLT as $ N \rightarrow \infty. $
\end{lem}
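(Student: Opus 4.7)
The overall strategy is to rewrite every coordinate of the displayed vector as $\frac{1}{\sqrt{N}}\sum_{n=1}^N W_n + o_p(1)$ for some i.i.d.\ mean-zero random vectors $W_n$, and then to invoke the multivariate CLT together with Slutsky's lemma.

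The $\nabla Z_N$ coordinates are immediate: since $Z_N(s) = \frac{1}{\sqrt{N}}\sum_{n=1}^N \epsilon_n(s)$, we have $\nabla Z_N(s) = \frac{1}{\sqrt{N}}\sum_{n=1}^N \nabla\epsilon_n(s)$, and the DE condition applied to $\epsilon_n$ gives $\mathbb{E}[\nabla\epsilon_n(s)] = \nabla\mathbb{E}[\epsilon_n(s)] = 0$, so these summands are already mean zero.

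For the $\nabla V_N/\sqrt{N}$ coordinates the key step is to use Lemma \ref{lem:pointwisesigma2}, which (using the unit variance assumption $\sigma \equiv 1$) rearranges $V_N$ as $\sum_{n=1}^N \epsilon_n^2 - \tfrac{1}{N}\bigl(\sum_{n=1}^N \epsilon_n\bigr)^2$. Differentiating and dividing by $\sqrt{N}$,
\begin{equation*}
\frac{\nabla V_N(s)}{\sqrt{N}} = \frac{2}{\sqrt{N}}\sum_{n=1}^N \epsilon_n(s)\nabla\epsilon_n(s) \;-\; 2\bigl(\hat{\mu}_N(s) - \mu(s)\bigr)\,\nabla Z_N(s).
\end{equation*}
The second term is $o_p(1)$, because $\hat{\mu}_N - \mu \overset{a.s.}{\longrightarrow} 0$ by the pointwise SLLN while $\nabla Z_N(s) = O_p(1)$ by the first step. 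The remaining leading term is again a $\tfrac{1}{\sqrt{N}}$-scaled sum of i.i.d.\ vectors $2\epsilon_n(s)\nabla\epsilon_n(s)$, and applying the DE condition to $\epsilon_n^2$ gives $2\mathbb{E}[\epsilon_n(s)\nabla\epsilon_n(s)] = \nabla\mathbb{E}[\epsilon_n(s)^2] = \nabla 1 = 0$, so these summands are mean zero as well.

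Stacking across $s_1,\dots,s_J$, the entire vector of interest equals $\frac{1}{\sqrt{N}}\sum_{n=1}^N X_n + o_p(1)$, where
\begin{equation*}
X_n := \bigl(\nabla^T\epsilon_n(s_1),\; 2\epsilon_n(s_1)\nabla^T\epsilon_n(s_1),\;\ldots,\;\nabla^T\epsilon_n(s_J),\; 2\epsilon_n(s_J)\nabla^T\epsilon_n(s_J)\bigr)^T,
\end{equation*}
and the conclusion follows from the multivariate CLT applied to these i.i.d.\ mean-zero vectors, combined with Slutsky's lemma to absorb the $o_p(1)$ remainders. The one place that requires care, and which I expect to be the main obstacle, is verifying finiteness of $\cov(X_1)$: the pure $\nabla^T\epsilon_n(s_j)$ blocks are controlled by the assumption $\Lambda(s) < \infty$, but the $\epsilon_n(s_i)\nabla^T\epsilon_n(s_j)$ blocks involve mixed moments of the form $\mathbb{E}[\epsilon_n(s_i)^2\,\|\nabla\epsilon_n(s_j)\|^2]$; these will need to be controlled either by strengthening the integrability hypothesis on the noise or by a Cauchy--Schwarz bound together with finite fourth moments of $\epsilon_n$ and $\nabla\epsilon_n$.
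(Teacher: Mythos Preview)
Your approach is essentially the same as the paper's: both reduce the vector to $\frac{1}{\sqrt{N}}\sum_{n=1}^N X_n + o_p(1)$ with exactly the same $X_n$, then invoke the multivariate CLT and Slutsky; your decomposition via Lemma~\ref{lem:pointwisesigma2} is in fact slightly cleaner than the paper's four-term expansion of $\sqrt{N}\nabla\hat{\sigma}_N^2$ (two of whose terms cancel). Your closing concern about finiteness of $\cov(X_1)$---in particular the mixed moments $\mathbb{E}[\epsilon_n(s_i)^2\,\lVert\nabla\epsilon_n(s_j)\rVert^2]$---is well taken, and the paper simply glosses over this point rather than verifying it from the stated hypotheses.
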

\begin{proof}
	\noindent Using this lemma we can prove the following theorem which generalizes (\ref{eq:GaussCDconv}).
\noindent Differentiating $ \hat{\sigma}^2_N $ (and evaluating all fields pointwise) and letting $ \mybar{\epsilon}_N =   \frac{1}{N}\sum_{j = 1}^N \epsilon_j$, we have
\begin{align*}
\sqrt{N}\nabla \hat{\sigma}_N^2 &= \frac{2\sqrt{N}}{N-1}\sum_{n=1}^N\left( \epsilon_n - \mybar{\epsilon}_N  \right)\left( \nabla \epsilon_n - \frac{1}{N}\sum_{k = 1}^N \nabla \epsilon_k \right)\\
&= \frac{2\sqrt{N}}{N-1}\sum_{n = 1}^N \epsilon_n \nabla \epsilon_n - \frac{2\sqrt{N}}{N-1} \sum_{n = 1}^N \epsilon_n\left(\frac{1}{N}\sum_{k = 1}^N \nabla \epsilon_k\right) \\
&\quad\quad - \frac{2\mybar{\epsilon}_N\sqrt{N}}{N-1}\sum_{n = 1}^N \nabla \epsilon_n + \frac{2\mybar{\epsilon}_N N\sqrt{N}}{N-1}  \left(\frac{1}{N}\sum_{k = 1}^N \nabla \epsilon_k \right).
\end{align*} 
Now $ \frac{2\bar{\epsilon}_N\sqrt{N}}{N-1} $ converges in distribution by the CLT (as $ \var(\epsilon_n) < \infty$) and $ \frac{1}{N}\sum_{k = 1}^N \nabla \epsilon_k \convas 0 $ by the SLLN as $ Y_1 $ satisfies the DE condition and so 
\begin{equation*}
\left(\frac{2\sqrt{N}}{N-1} \sum_{n = 1}^N \epsilon_n\right)\left(\frac{1}{N}\sum_{k = 1}^N \nabla \epsilon_k\right) \convp 0 \text{ as } N \longrightarrow \infty.
\end{equation*} 
Similarly the third and fourth terms converge in probability to zero as $ N \longrightarrow \infty. $ 
\noindent We can thus write
\begin{equation*}
\pmatrix{\nabla^T Z_N(\theta_1); \nabla^T V_N(\theta_J)/\sqrt{N};\vdots;
	\nabla^T Z_N(\theta_1); \nabla^T V_N(\theta_J)/\sqrt{N}} = \sqrt{N}\pmatrix{\frac{1}{N} \sum_{n = 1}^N\nabla^T \epsilon_n(\theta_1); \nabla^T \hat{\sigma}^2_N(\theta_1); \vdots; \frac{1}{N} \sum_{n = 1}^N\nabla^T \epsilon_n(\theta_J); \nabla^T \hat{\sigma}^2_N(\theta_J)}  = 
\sqrt{N}\frac{1}{N}\sum_{n = 1}^N \pmatrix{\nabla^T \epsilon_n(\theta_1); 2\epsilon_n(\theta_1)\nabla^T \epsilon_n(\theta_1); \vdots; \nabla^T \epsilon_n(\theta_1); 2\epsilon_n(\theta_J)\nabla^T \epsilon_n(\theta_J)}
\end{equation*} 
\noindent where the last equality holds up to a term that converges to zero in probability, by Slutsky. The result follows by applying the multivariate CLT.
%
\end{proof}

\subsubsection{Proof of Theorem  \ref{thm:derivclt}}\label{AA:derivclt}
We will take the same approach as we did in the proof of Lemma \ref{lem:Tstatderiv} and Corollary \ref{cor:Tstatderiv}, namely to first prove the result assuming that the variance is constant and then use this to obtain the general result. So assume that $ Y_1$ has variance 1 everywhere. Then,
\begin{equation*}
\nabla T_N = 
\sqrt{N-1}\left(\frac{\sqrt{N}\nabla \mu + \nabla Z_N}{V_N^{1/2}}\right) - 
\sqrt{N-1}\left(\frac{\sqrt{N}\mu+ Z_N}{2V_N^{3/2}}\right)\nabla V_N.
\end{equation*} 
Thus
\begin{align*}
\nabla T_N - \frac{\sqrt{N}\nabla \mu}{\sqrt{V_N/N-1}} &=  
\nabla Z_N + \left(\sqrt{\frac{N-1}{V_N}} - 1\right) \nabla Z_N 
- \frac{N-1}{V_N} \left( \frac{\sqrt{N}\mu + Z_N}{2V_N^{1/2}}\right)\frac{\nabla V_N}{\sqrt{N-1}}\\
&= \nabla Z_N 
- \frac{\mu\nabla V_N}{2\sqrt{N-1}} +
\left(\sqrt{\frac{N-1}{V_N}} - 1\right) \nabla Z_N\\
&\hspace{2cm}+
\left(\frac{\mu}{2}- \frac{N-1}{V_N} \left( \frac{\sqrt{N}\mu + Z_N}{2V_N^{1/2}}\right)\right)\frac{\nabla V_N}{\sqrt{N-1}}. 
\end{align*} 
The last two terms converge to zero in distribution (by the usual arguments involving Slutsky by applying the CLT and using the fact that $ \frac{\sqrt{N}\mu + Z_N}{\sqrt{V_N}} \convas \mu$, see Section \ref{SS:Tderivdist}). Applying Slutsky again and using the joint asymptotic distribution of $ (\nabla Z_N, \nabla V_N/\sqrt{N} ) $, derived in Lemma \ref{lem:derivclt} gives the result in the unit-variance case. Dropping the assumption of constant variance, the general result follows by considering the fields $ Y_n/\sigma $, arguing as in the proof of Corollary \ref{cor:Tstatderiv}.
%

\subsubsection{Proof of Remark \ref{rem:rem}}\label{A:remark}

Assume that the fields are unit-variance. Then since the fields are Gaussian, $ Y_1^2 $ satisfies the DE condition and so $ \nabla Y_1 $ is independent of $ Y_1 $ at each point. As such $ \mathbb{E}\left[ \epsilon_1 \nabla^T \epsilon_1 \nabla \epsilon_1 \right] = 0 $ ($ \epsilon_1$ is mean zero as $Y_1$ satisfies the DE condition) and 
\begin{equation*}
\cov\left(\epsilon_1\nabla^T\epsilon_1\right) =  \var\left(\epsilon_1\right)\cov\left(\nabla^T\epsilon_1\right) = 
\cov\left(\nabla^T\epsilon_1\right) = \Lambda.
\end{equation*} 
So the $ 2 \times 2  $ block diagonal entries of the limiting covariance in Lemma \ref{lem:derivclt} equal $ \pmatrix{\Lambda. 0;0.4\Lambda}  $. As such, using the expansion of $ \nabla T_N - \frac{\sqrt{N}\nabla \mu}{\sqrt{V/N-1}} $ from the proof of Theorem \ref{thm:derivclt} it follows that
\begin{equation*}
\nabla T_N - \frac{\sqrt{N}\nabla\mu}{\sqrt{V_N/N-1}} \convd \mathcal{N}\left( 0, (1+\mu^2)\Lambda \right)
\end{equation*}
as $ N \rightarrow \infty$. The general result (for arbitrary variance) follows by arguing as in the proof of Corollary \ref{cor:Tstatderiv}.


\subsection{Proof of Theorem \ref{thm:CDlocclt}}\label{SS:cdthmproof}
\noindent In what follows we verify that Assumptions CF and EE2 of \cite{Xiaoxia} hold in our setting. Our result then follows by applying their Theorem 4.1.
\begin{proof}
	\begin{enumerate}[label=\roman*)]
		\item In our setting, $ B_j $ is compact and $ d_N $ is continuous, pointwise measurable and converges uniformly in probability to $ \mu $ as shown in Section \ref{SS:MC2}. Furthermore, $ \argmax_{t \in B_j} \frac{\mu(t)}{\sigma(t)} = \theta_j $ is the unique maximum (or minimum if $ \theta_j $ is a minimum) of $ \frac{\mu}{\sigma} $, so it follows that $ \hat{\theta}_{j,N} \convp \theta_j $ by Theorem 4.1.1 from \cite{Amemiya1985}. $ \theta_0 $ also lies on the interior of $ B_j$.  In particular $ \bf{\hat{\theta}_N} \convp \bf{\theta} $.
		\item $ d_N $ is a.s. twice continuously differentiable as $ Y_N $ are and the $ Y_n$ are non-degenerate.
		\item For $ 1 \leq j \leq J, $ $ \nabla \frac{\mu(\theta_j)}{\sigma(\theta_j} = 0 $ and so by Theorem \ref{thm:derivclt}, $ \sqrt{N}\left( \nabla d_N(s_1), \cdots, \nabla d_N(s_J) \right)^T $ satisfies a CLT.
		\item For each $ 1 \leq j\leq J,$  given a sequence $\tilde{\theta}_{j,N} \convp \theta_j$, 
		\begin{align*}
		\nabla^2d_N(\tilde{\theta}_{j,N}) = \nabla^2d_N(\tilde{\theta}_{j,N}) - \nabla^2\frac{\mu(\tilde{\theta}_{j,N})}{\sigma(\tilde{\theta}_{j,N})} + \nabla^2\frac{\mu(\tilde{\theta}_{j,N})}{\sigma(\tilde{\theta}_{j,N})} - \nabla^2\frac{\mu(\theta_{j})}{\sigma(\theta_j)} + \nabla^2\frac{\mu(\theta_{j})}{\sigma(\theta_j)}
		\end{align*}  
		which converges in probability to $ \nabla^2 \frac{\mu(\theta_j)}{\sigma(\theta_j)} $, since by Proposition \ref{prop:CDconv},
		\begin{equation*}
		\nabla^2d_N(\tilde{\theta}_{j,N}) - \nabla^2\frac{\mu(\tilde{\theta}_{j,N})}{\sigma(\tilde{\theta}_{j,N})} \convas 0
		\end{equation*}
		and as $ \nabla^2 \frac{\mu}{\sigma} $ is uniformly continuous $\nabla^2\frac{\mu(\tilde{\theta}_{j,N})}{\sigma(\tilde{\theta}_{j,N})} - \nabla^2\frac{\mu(\theta_{j})}{\sigma(\theta_j)}\convp 0$. In particular we have convergence in probability of the matrix of second derivatives.
	\end{enumerate}
	
\end{proof}

\newpage
\section{Further Results}
\subsection{Understanding the distribution of a symmetric ratio}\label{A:ratioexplan}
\begin{thm}\label{thm:AoverB}
	Suppose that $ A $ and $ B $ are independent real valued random variables with well defined densities $ p_A $ and $ p_B $  which are symmetric about $ \mathbb{E}\left[ A \right] $ and $ \mathbb{E}\left[ B \right] $ respectively. Assume that $ p_A(x) $ is decreasing for $ x> 0 $ and increasing for $ x < 0 $, $ B $ is positive and that $ \mathbb{E}\left[ \left| B \right| \right] < \infty$. Then for all $ x > 0, $  
	\begin{equation*}
	\mathbb{P}\left( \frac{A}{\mathbb{E}\left[ B \right]} > x \right) \leq \mathbb{P}\left( \frac{A}{B} > x \right).
	\end{equation*}
\end{thm}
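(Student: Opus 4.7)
The plan is to reframe the inequality as an instance of Jensen's inequality applied to a suitably chosen convex function, and then verify the convexity using only the monotonicity of $p_A$ on the positive half-line. Since $A$ and $B$ are independent, conditioning on $B$ gives
\[
\mathbb{P}\!\left(\tfrac{A}{B} > x\right) = \mathbb{E}\!\left[\mathbb{P}(A > xB \mid B)\right] = \mathbb{E}[f(B)], \qquad f(b) := \mathbb{P}(A > xb),
\]
while the left-hand side equals $f(\mathbb{E}[B])$. So the claim reduces to $f(\mathbb{E}[B]) \leq \mathbb{E}[f(B)]$, which is Jensen's inequality provided $f$ is convex on the support of $B$. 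Since $B$ is positive, that support sits inside $(0,\infty)$, so it suffices to establish convexity of $f$ on $(0,\infty)$.

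Next I would show that $g(y) := \mathbb{P}(A > y) = \int_y^\infty p_A(t)\,dt$ is convex on $(0,\infty)$; then $f(b) = g(xb)$ is convex on $(0,\infty)$ as a composition with an affine map (using $x > 0$). Heuristically, $g'(y) = -p_A(y)$ and $p_A$ is non-increasing on $(0,\infty)$ by hypothesis, so $g'$ is non-decreasing, giving convexity. To avoid assuming differentiability of $p_A$, I would instead verify convexity via the divided-difference characterization: for $0 < y_1 < y_2 < y_3$, convexity of $g$ is equivalent to
\[
\frac{g(y_2) - g(y_1)}{y_2 - y_1} \leq \frac{g(y_3) - g(y_2)}{y_3 - y_2},
\]
which, after substituting $g(y_i) - g(y_j) = -\int_{y_j}^{y_i} p_A$, becomes
\[
\frac{1}{y_2 - y_1}\int_{y_1}^{y_2} p_A(t)\,dt \;\geq\; \frac{1}{y_3 - y_2}\int_{y_2}^{y_3} p_A(t)\,dt.
\]
This holds because $p_A$ is non-increasing on $(0,\infty)$, so the average over the earlier interval dominates the average over the later one.

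With $f$ convex on $(0,\infty)$, $B$ taking values there, $\mathbb{E}[|B|] < \infty$, and $f$ bounded (hence integrable against any law on $(0,\infty)$), Jensen's inequality yields $f(\mathbb{E}[B]) \leq \mathbb{E}[f(B)]$, which is exactly the asserted inequality. The main (and really the only) subtlety is the convexity step when $p_A$ is not assumed differentiable, but the integral-average argument above sidesteps this cleanly. It is worth noting that the symmetry hypotheses on $p_A$ and $p_B$ do not enter directly; only the monotonicity of $p_A$ on $(0,\infty)$ (together with $x > 0$ and positivity of $B$) and finiteness of $\mathbb{E}[B]$ are actually used. Symmetry of $A$ about zero and of $B$ about $\mathbb{E}[B]$ are the natural setting in the application (Gaussian $\nabla^T\hat{\mu}_N$ and $\nabla^2\hat{\mu}_N$), so the statement is phrased in that form, but no part of the proof relies on them beyond establishing that $p_A$ peaks on the positive half-line's boundary.
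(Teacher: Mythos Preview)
Your argument is correct and is genuinely different from the paper's. The paper fixes $k=\mathbb{E}[B]$, uses the symmetry of $p_B$ about $k$ to confine the support of $B$ to $(0,2k)$, and then pairs $b\in(0,k)$ with its reflection $2k-b$; the difference $\mathbb{P}(A/B>x)-\mathbb{P}(A/k>x)$ becomes a sum over such pairs of the quantity $\big(\int_{bx}^{kx}p_A-\int_{kx}^{(2k-b)x}p_A\big)p_B(b)$, which is nonnegative because the two intervals have equal length and $p_A$ is non-increasing on $(0,\infty)$. Your route instead recognises the statement as Jensen's inequality for the convex function $b\mapsto \mathbb{P}(A>xb)$ on $(0,\infty)$, with convexity following from the same monotonicity of $p_A$. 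The Jensen argument is shorter and, as you note, strictly more general: it never uses the symmetry of $p_B$ (nor of $p_A$), whereas the paper's pairing trick relies essentially on $p_B(b)=p_B(2k-b)$. What the paper's computation buys is concreteness---one sees exactly where the excess probability comes from---but your approach isolates the structural reason the inequality holds.
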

\begin{proof}
	Let $ k = \mathbb{E}\left[ B \right] $, then 
	\begin{equation*}
	\mathbb{P}\left( \frac{A}{k} > x  \right) = \mathbb{P}\left( A > kx \right) = \int_{b = 0}^{2k}\int_{kx}^{\infty} p_A(a) p_B(b) \,da\,db
	\end{equation*}
	Here we have used the fact that $ p_B $ only has support in $ (0,2k) $ as $ B $ is positive and symmetric. Now, 
	\begin{equation*}
	\mathbb{P}\left( \frac{A}{B} > x  \right) = \mathbb{P}\left( A > Bx \right) = \int_{b = 0}^{2k}\int_{bx}^{\infty} p_A(a) p_B(b) \,da\,db.
	\end{equation*}
	As such,
	\begin{equation*}
	\mathbb{P}\left( \frac{A}{B} > x  \right) - \mathbb{P}\left( \frac{A}{k} > x  \right) = 
	\int_{b = 0}^k\int_{bx}^{kx} p_A(a) p_B(b) \,da \,db - \int_{b = k}^{2k}\int_{kx}^{bx} p_{A}(a)p_B(b) \,da \,db > 0
	\end{equation*}
	since for each $ b \in (0,k),  $ 
	\begin{equation*}
	\int_{bx}^{kx} p_A(a) \,da > \int_{kx}^{(2k-b)x}p_A(a) \, da
	\end{equation*}
	and $ p_B(b) = p_B(2k-b) $ by symmetry. 
\end{proof}

This theorem, applied in our setting, provides a justification for why we should expect the Monte Carlo confidence regions to have a higher level of coverage than their asymptotic counterparts. That is because conditional on the observed data we could obtain the quantiles, used to generate the asymptotic confidence intervals by generating data from a $ \mathcal{N}(0, \Lambda(\theta_j)) $ distribution and multiplying by $ (\nabla^2 \hat{\mu}_N)(\theta_j))^{-1} $ (note that here and for the rest of this discussion we pick an arbitrary $ 1 \leq j \leq J$). On the other hand the Monte Carlo distribution generates data via equation \eqref{eq:deltak}, using the $ A_{k,N}, B_{k,N} $ notation there $ \textbf{vech}^{-1}(B_{k,N}) $ is centred at $ \hat{\mu}_N(\theta_j)) $ and $ A_{k,N} \sim \mathcal{N}(0, \Lambda(\theta_j)) $. So we are in the setting of Theorem \ref{thm:AoverB}. The only difference is that $ \textbf{vech}^{-1}(B_{k,N}) $ is not guaranteed to be positive. Asymptotically this is essentially irrelevant because the distribution of $ \textbf{vech}^{-1}(B_{k,N}) $ is closely concentrated around $ -\nabla^2 \hat{\mu}_N(\theta_j)$ and so is kept away from 0 with very high probability. Nevertheless in the finite sample, in order to fully benefit from the results of the theorem and ensure stability of the Monte Carlo method, we recommend truncating $ \textbf{vech}^{-1}(B_k) $ to ensure that it is positive and to do this symmetrically so that the theorem applies.

\subsection{Peak Identifiability in practice}\label{S:peakidentifiability}
The requirement in Proposition \ref{prop:peakconv} that  $ \nabla\eta_N, \nabla^2\eta_N  \convup 0$ can be shown to hold in a number of reasonable settings. Here we will show that it holds for mean and $ t$-statistic fields and in the context of the linear model. To demonstrate this we will need to able to exchange integration and differentiation and then apply the fSLLN for which we will require Assumption \ref{ass:derivexch}.
\subsubsection{Mean and Cohen's $ d $}\label{SS:MC2}
Assume that the random fields $ (Y_n)_{n \in \mathbb{N}} $ satisfy Assumption \ref{ass:derivexch}a,b. Then we can apply the fSLLN and derivative exchangeability to yield
\begin{equation*}
\hat{\mu}_N - \mu \convuas 0,  \quad \nabla \hat{\mu}_N - \nabla \mu \convuas 0 \text{ and } \nabla^2 \hat{\mu}_N - \nabla^2 \mu \convuas 0.
\end{equation*}
In the same setting but for Cohen's $ d $ we have the following results.
\begin{lem}\label{lem:CDconv}
	Suppose that $ (Y_n)_{n \in \mathbb{N}} $ satisfy Assumption \ref{ass:derivexch}$ b(i) $ and $ c(i) $, then $ \hat{\sigma}_N^2 \convuas \sigma^2 $. Then, recalling that we have assumed that $ \inf_{s \in S} \sigma^2(s) > 0 $, it follows that $ \frac{1}{\hat{\sigma}_N} \convuas \frac{1}{\sigma}  \text{ and so } \displaystyle d_N \convuas d. $
\end{lem}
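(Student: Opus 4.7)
The plan is to establish the three convergences in turn, starting with $\hat{\sigma}_N^2 \convuas \sigma^2$, then deducing $1/\hat{\sigma}_N \convuas 1/\sigma$ via the uniform lower bound on $\sigma$, and finally combining with the known uniform convergence of $\hat{\mu}_N$ to conclude $d_N \convuas d$.

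For the first step, I would apply Lemma \ref{lem:pointwisesigma2} pointwise in $s$ with $a_n = Y_n(s)$ and $\mu = \mu(s)$, yielding
\begin{equation*}
\frac{1}{N}\sum_{n=1}^N (Y_n(s) - \hat{\mu}_N(s))^2 = \frac{1}{N}\sum_{n=1}^N (Y_n(s) - \mu(s))^2 + (\mu(s) - \hat{\mu}_N(s))\frac{2}{N}\sum_{n=1}^N(Y_n(s) - \mu(s)) + (\mu(s) - \hat{\mu}_N(s))^2.
\end{equation*}
The first summand is a sample average of the i.i.d.\ random fields $(Y_n - \mu)^2 = \sigma^2 \epsilon_n^2$; since $Y_1$ satisfies Assumption \ref{ass:derivexch}$c(i)$ and $\mu$ is bounded, $\mathbb{E}[\sup_s (Y_1(s) - \mu(s))^2]$ is finite, so the fSLLN (via \cite{Ledoux2013}'s Corollary 7.10) gives uniform a.s.\ convergence to $\mathbb{E}[(Y_1 - \mu)^2] = \sigma^2$ (using that DE under Assumption \ref{ass:derivexch}$b(i)$ lets us identify the limit as $\sigma^2$). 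The third summand is $(\mu - \hat{\mu}_N)^2$, which converges uniformly a.s.\ to $0$ since $\hat{\mu}_N \convuas \mu$ by the fSLLN applied under Assumption \ref{ass:derivexch}$b(i)$. The cross term is handled by Cauchy–Schwarz between $\hat{\mu}_N - \mu$ and the sample mean of $Y_n - \mu$, both of which vanish uniformly a.s. Multiplying by $N/(N-1) \to 1$ yields $\hat{\sigma}_N^2 \convuas \sigma^2$.

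For the second step, set $\sigma_{\min} := \inf_{s \in S} \sigma(s) > 0$. Uniform a.s.\ convergence of $\hat{\sigma}_N^2$ to $\sigma^2$ implies that on a set of probability one there exists $N_0$ such that for all $N \geq N_0$, $\hat{\sigma}_N^2(s) \geq \sigma_{\min}^2/2$ uniformly in $s$. On this event,
\begin{equation*}
\left| \frac{1}{\hat{\sigma}_N(s)} - \frac{1}{\sigma(s)} \right| = \frac{|\sigma(s) - \hat{\sigma}_N(s)|}{\hat{\sigma}_N(s)\sigma(s)} \leq \frac{|\sigma^2(s) - \hat{\sigma}_N^2(s)|}{(\hat{\sigma}_N(s)+\sigma(s))\hat{\sigma}_N(s)\sigma(s)} \leq \frac{\sup_{s} |\sigma^2(s) - \hat{\sigma}_N^2(s)|}{\sigma_{\min}^3/\sqrt{2}},
\end{equation*}
so the right-hand side tends to zero, giving $1/\hat{\sigma}_N \convuas 1/\sigma$.

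For the final step, since $d_N = \hat{\mu}_N / \hat{\sigma}_N$ and $d = \mu/\sigma$, write
\begin{equation*}
d_N - d = (\hat{\mu}_N - \mu)\frac{1}{\hat{\sigma}_N} + \mu\left( \frac{1}{\hat{\sigma}_N} - \frac{1}{\sigma} \right),
\end{equation*}
and apply uniform a.s.\ convergence of $\hat{\mu}_N$ to $\mu$, of $1/\hat{\sigma}_N$ to $1/\sigma$, together with boundedness of $\mu$ and of $1/\sigma$, to conclude $d_N \convuas d$. The main technical step is the first one: verifying that the integrability condition needed by the fSLLN is satisfied for $(Y_1 - \mu)^2$, which reduces to checking Assumption \ref{ass:derivexch}$c(i)$ plus boundedness of $\mu$; the subsequent algebraic manipulations are routine.
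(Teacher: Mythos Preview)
Your proof is correct and follows essentially the same approach as the paper: expand $\hat{\sigma}_N^2$ via Lemma \ref{lem:pointwisesigma2}, apply the fSLLN term by term, rescale by $N/(N-1)$, and then pass to $1/\hat{\sigma}_N$ and $d_N$ using the uniform lower bound on $\sigma$. The only difference is cosmetic: where the paper invokes the continuous mapping theorem for the last two steps, you write out the explicit uniform bounds, and you spell out the integrability checks for the fSLLN that the paper leaves implicit.
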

\begin{proof}
	Applying Lemma \ref{lem:pointwisesigma2} pointwise to expand $ \hat{\sigma}^{2} $, and the fSLLN multiple times to the elements of the resulting expansion, (and scaling by $ \frac{N}{N-1} $) it follows that $  \hat{\sigma}_N^2 \convuas \sigma^2 $. Since $ \inf_{s\in S} \sigma^2(s) > 0 $ the inverse is well-defined and so the final results follow by the continuous mapping theorem and by noting that $ \hat{\mu}_N \convuas \mu $.
\end{proof}
\begin{prop}\label{prop:CDconv}
	Suppose that $ (Y_n)_{n \in \mathbb{N}} $ satisfies Assumption \ref{ass:derivexch} then, $ \nabla\left( \hat{d}_N - d \right)  \convuas 0  $ and 
	$ \nabla^2\left(\hat{d}_N - d \right) \convuas 0. $
\end{prop}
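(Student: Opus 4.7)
The plan is to write $\nabla d_N$ and $\nabla^2 d_N$ out explicitly using the quotient rule, show that each factor converges uniformly almost surely to its deterministic analogue, and then combine these via the continuous mapping theorem, exploiting $\inf_{s \in S}\sigma(s) > 0$ to keep denominators bounded away from zero. Differentiating $d_N = \hat{\mu}_N/\hat{\sigma}_N$ gives
\begin{equation*}
\nabla d_N = \frac{\nabla \hat{\mu}_N}{\hat{\sigma}_N} - \frac{\hat{\mu}_N\,\nabla \hat{\sigma}_N^2}{2\hat{\sigma}_N^{3}},
\end{equation*}
and a longer but analogous identity holds for $\nabla^2 d_N$ involving $\nabla^2 \hat{\mu}_N$, $\nabla^2 \hat{\sigma}_N^2$ and products of $\nabla \hat{\mu}_N$ with $\nabla \hat{\sigma}_N^2$, divided by powers of $\hat{\sigma}_N$ up to $\hat{\sigma}_N^{5}$. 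Identical expressions (with $\mu,\sigma$) describe $\nabla d$ and $\nabla^2 d$, so it suffices to prove uniform a.s.\ convergence of each ingredient.

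The first ingredient is uniform convergence of $\hat{\mu}_N$, $\nabla \hat{\mu}_N$, $\nabla^2 \hat{\mu}_N$ to $\mu$, $\nabla \mu$, $\nabla^2 \mu$. Assumption \ref{ass:derivexch}b gives the integrability conditions that allow \cite{Ledoux2013}'s Corollary 7.10 (fSLLN) to apply to $Y_1$ and its first two derivatives; the derivative exchangeability supplied by Lemmas \ref{lem:DEcond} and \ref{lem:supbound} ensures the respective limits are indeed $\mu$, $\nabla \mu$ and $\nabla^2 \mu$. The second ingredient is uniform convergence of $\hat{\sigma}_N^2$, $\nabla \hat{\sigma}_N^2$, $\nabla^2 \hat{\sigma}_N^2$ to $\sigma^2$, $\nabla \sigma^2$, $\nabla^2 \sigma^2$. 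Expanding pointwise via Lemma \ref{lem:pointwisesigma2} writes $\hat{\sigma}_N^2$ as $\tfrac{N}{N-1}$ times a sum of an i.i.d.\ average of $(Y_n-\mu)^2$, a cross term linear in $\hat{\mu}_N - \mu$, and $(\hat{\mu}_N-\mu)^2$. Differentiating term by term, each piece is either the empirical average of a field in $Y_n$ or $Y_n^2$ (and their first two derivatives) or a product of quantities already shown to converge uniformly a.s.\ in the first ingredient. Assumption \ref{ass:derivexch}c supplies exactly the finite-second-moment suprema required by the argument sketched in Section \ref{AA:fSLLN} so that the fSLLN and derivative exchangeability apply to $Y_1^2$ and its first two derivatives, giving the desired limits.

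To conclude, $\hat{\sigma}_N^2 \convuas \sigma^2$ together with $\inf_S \sigma > 0$ implies that, on a probability-one event and for all $N$ sufficiently large, $\hat{\sigma}_N$ is uniformly bounded below by a positive constant on $S$; hence $\hat{\sigma}_N^{-k} \convuas \sigma^{-k}$ uniformly for $k = 1,2,3,5$ by the continuous mapping theorem. Combining these uniform convergences multiplicatively with the uniform convergence of $\hat{\mu}_N$, $\nabla \hat{\mu}_N$, $\nabla^2 \hat{\mu}_N$ and of $\nabla \hat{\sigma}_N^2$, $\nabla^2 \hat{\sigma}_N^2$ (each of which has a uniformly bounded almost sure limit on the compact set $\overline{S}$-like domain, using boundedness of $\mu,\sigma$ and of their derivatives as uniform limits of bounded sequences) in the closed-form expressions for $\nabla d_N$ and $\nabla^2 d_N$ yields $\nabla(\hat{d}_N - d) \convuas 0$ and $\nabla^2(\hat{d}_N - d) \convuas 0$.

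The main obstacle is the bookkeeping for $\nabla^2 d_N$: it involves five or six products of factors that each need independent uniform control, and for the uniform convergence of products one must ensure uniform boundedness of every factor, not merely pointwise convergence. The crucial input that makes this uniform control possible is the lower bound $\inf_S \sigma > 0$, which transfers via the continuous mapping theorem into an almost sure eventual uniform lower bound on $\hat{\sigma}_N$; without it, the negative powers $\hat{\sigma}_N^{-k}$ could blow up somewhere on $S$ and destroy uniform convergence despite holding pointwise everywhere.
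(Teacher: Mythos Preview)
Your proposal is correct and follows essentially the same approach as the paper: expand $\hat{\sigma}_N^2$ via Lemma \ref{lem:pointwisesigma2}, apply the fSLLN to $\hat{\mu}_N$, $\hat{\sigma}_N^2$ and their first two derivatives (using Assumption \ref{ass:derivexch}b for the former and \ref{ass:derivexch}c for the latter), use $\inf_S\sigma>0$ to control the inverse powers of $\hat{\sigma}_N$, and then conclude by writing $\nabla d_N$ and $\nabla^2 d_N$ via the quotient rule. The only cosmetic difference is that the paper first passes from $\nabla\hat{\sigma}_N^2$ to $\nabla\hat{\sigma}_N$ before invoking the quotient rule, whereas you keep $\nabla\hat{\sigma}_N^2$ in the numerator and absorb the extra power of $\hat{\sigma}_N$ into the denominator.
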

\begin{proof}
	By Lemma \ref{lem:supbound} we can exchange both first and second derivatives of $ \sigma^2\epsilon_1^2 $ with the expectation so that 
	\begin{equation*}
	\nabla \sigma(s)^2 = \nabla  \mathbb{E}\left[ (\sigma(s) \epsilon_1(s))^2 \right] = \mathbb{E}\left[ \nabla (\sigma(s) \epsilon_1(s))^2 \right]
	\end{equation*}
	and
	\begin{equation*}
	\nabla^2 \sigma(s)^2 = \nabla^2 \mathbb{E}\left[ (\sigma(s) \epsilon_1(s))^2 \right] = \mathbb{E}\left[ \nabla^2 (\sigma(s) \epsilon_1(s))^2  \right].
	\end{equation*}
	As such, differentiating the expansion from Lemma \ref{lem:pointwisesigma2} (applied to $ \hat{\sigma}^{2} $), and applying the fSLLN multiple times to the expansion, it follows that 
	$  \nabla\hat{\sigma}^2_N \convuas \nabla \sigma^2  $. As such, applying Lemma \ref{lem:CDconv}, we have
	\begin{equation*}
	\nabla \hat{\sigma}_N = \nabla \left( \hat{\sigma}^2_N \right)^{1/2} = \frac{\nabla \hat{\sigma}^2_N}{2\hat{\sigma}_N} \convuas \frac{\nabla \sigma^2}{2\sigma} = \nabla \sigma.
	\end{equation*}
	Similarly, $ \nabla^2 \hat{\sigma}_N \convuas \nabla^2 \sigma.$ $ \frac{1}{\hat{\sigma}_N} \convuas \frac{1}{\sigma} $ by Lemma \ref{lem:CDconv}, so it follows that
	\begin{equation*}
	\nabla \left( \frac{\hat{\mu}_{N}}{\hat{\sigma}_N} - \frac{\mu}{\sigma}\right) =
	\frac{\nabla \hat{\mu}_{N}}{\hat{\sigma}_N} - \nabla\left( \frac{1}{\hat{\sigma}_N} \right)\hat{\mu} - \frac{\nabla\mu}{\sigma} + \nabla\left( \frac{1}{\sigma} \right)\mu \convuas 0.
	\end{equation*}
	The proof for the second derivative is similar.
\end{proof}

These results mean that Proposition \ref{prop:peakconv} can be applied to mean and $ t $-fields.
\subsubsection{Linear Model}\label{SS:LM}
The linear model falls naturally into the signal plus noise framework and so the identifiability results of Proposition \ref{prop:peakconv} can be shown to apply. To formalize this, let $ p \in \mathbb{N} $ be the number of predictors and let $ \mathcal{X} $ be a multivariate distribution on $ \mathbb{R}^p $ with finite second moments, with density that is bounded above and such that if $ x \sim \mathcal{X} $ then $\cov(x) $ is positive definite. Let $ (x_n)_{n \in \mathbb{N}} $ be a sequence of independent random vectors in $ \mathbb{R}^p $ such that $ x_n \sim \mathcal{X} $ for all $ n$ 
and for each $ N \in \mathbb{N} $ set $ X_N = (x_1 \dots x_N)^T \in \mathbb{R}^{N \times p } $. Define a sequence of random fields $ (Y_n)_{n \in \mathbb{N}} $ on $ S $ such that for $ s \in S $, $ Y_n(s) = x_n^T\beta(s) + \sigma(s)\epsilon_n(s), $
where the $ \epsilon_n$ are i.i.d real-valued mean-zero and variance-one random fields and $ \beta(s) \in \mathbb{R}^p $. Let $ Y^N = [Y_1, \dots, Y_N]^T $ and $ \pmb{\epsilon}_N = [\epsilon_1, \dots, \epsilon_N]^T \in \mathbb{R}^N$. Given some contrast vector $ w \in \mathbb{R}^p $ let $ \gamma = w^T\beta $ and define
\begin{align}\label{eq:GLMfield}
\hat{\gamma}_N &= w^T\hat{\beta}_N = w^T(X_N^TX_N)^{-1}X_N^TY^N = w^T(X_N^TX_N)^{-1}X_N^T(X_N\beta + \pmb{\epsilon}_N)\\
&= w^T\beta + w^T(X_N^TX_N)^{-1}X_N^T \pmb{\epsilon}_N. 
\end{align}
This model thus falls under the signal plus noise framework and we have the following result. (Treating the linear model as a signal plus noise model is relatively common, see e.g. \cite{Sommerfield2018} and \cite{TelschowHPE}.)
\begin{prop}\label{prop:lmconv}
	Suppose that the $ \epsilon_n $ are twice differentiable, are independent of the $ x_n $ and satisfy Assumption \ref{ass:derivexch}b. Then as $ N \rightarrow \infty, $ 
	\begin{equation*}
	(X_N^TX_N)^{-1}X_N^T\pmb{\epsilon}_N \convuas 0, \nabla(X_N^TX_N)^{-1}X_N^T\pmb{\epsilon}_N \convuas 0 \text{ and } \nabla^2(X_N^TX_N)^{-1}X_N^T\pmb{\epsilon}_N \convuas 0.
	\end{equation*}
	In particular for  $w \in \mathbb{R}^p $, $ \hat{\gamma}_N- \gamma \convuas 0, \nabla(\hat{\gamma}_N- \gamma) \convuas 0 \text{ and } \nabla^2(\hat{\gamma}_N- \gamma) \convuas 0. $
\end{prop}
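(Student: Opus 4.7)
The plan is to factor the noise contribution into two pieces whose convergence is easier to handle separately, namely
\begin{equation*}
(X_N^TX_N)^{-1}X_N^T\pmb{\epsilon}_N \;=\; \left(\tfrac{1}{N}\sum_{n=1}^N x_n x_n^T\right)^{-1}\tfrac{1}{N}\sum_{n=1}^N x_n \epsilon_n,
\end{equation*}
and to treat each factor separately. The first factor is a sequence of random matrices that do not depend on $s\in S$; by Lemma \ref{lem:inverse} (whose hypothesis holds since $\mathcal{X}$ has bounded density) it converges a.s.\ to the deterministic constant $(\mathbb{E}[x_1x_1^T])^{-1}$. The second factor is an empirical average of i.i.d.\ $\mathbb{R}^p$-valued random fields on $S$, to which I would apply the functional SLLN of \cite{Ledoux2013} Corollary 7.10, as the paper does throughout Section \ref{S:MSUA}.

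To verify the fSLLN hypotheses for $s\mapsto x_n\epsilon_n(s)$: the field has mean zero because $x_n$ is independent of $\epsilon_n$ and $\mathbb{E}[\epsilon_n(s)]=0$; the integrability condition follows from independence,
\begin{equation*}
\mathbb{E}\bigl[\sup_{s\in S}\lVert x_1\epsilon_1(s)\rVert\bigr] \;\leq\; \mathbb{E}\lVert x_1\rVert\cdot\mathbb{E}\bigl[\sup_{s\in S}|\epsilon_1(s)|\bigr] \;<\;\infty,
\end{equation*}
where finiteness of $\mathbb{E}\lVert x_1\rVert$ follows from the finite second moments of $\mathcal{X}$ and finiteness of the sup is Assumption \ref{ass:derivexch}b(i). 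Hence $\tfrac{1}{N}\sum x_n\epsilon_n\convuas 0$. Multiplying by a sequence of matrices that converges a.s.\ to a bounded constant limit preserves uniform a.s.\ convergence to zero, which gives the first claim.

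For the derivatives I would exploit that $X_N$ does not depend on $s$, so differentiation commutes with the matrix operations: $\nabla[(X_N^TX_N)^{-1}X_N^T\pmb{\epsilon}_N]=(X_N^TX_N)^{-1}X_N^T\nabla\pmb{\epsilon}_N$, and similarly for $\nabla^2$. The same argument then applies with $\nabla\epsilon_n$ (resp.\ $\nabla^2\epsilon_n$) in place of $\epsilon_n$: the mean-zero condition $\mathbb{E}[\nabla\epsilon_n(s)]=0$ and $\mathbb{E}[\nabla^2\epsilon_n(s)]=0$ comes from the DE condition, which by Lemmas \ref{lem:supbound} and \ref{lem:DEcond} is implied by parts b(ii) and b(iii) of Assumption \ref{ass:derivexch}; the corresponding integrability bounds are exactly Assumption \ref{ass:derivexch}b(ii) and b(iii). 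The final claim on $\hat\gamma_N-\gamma$ and its derivatives then follows immediately by left-multiplying by the constant vector $w^T$, since from \eqref{eq:GLMfield} we have $\hat\gamma_N-\gamma=w^T(X_N^TX_N)^{-1}X_N^T\pmb\epsilon_N$, and multiplication by a constant preserves uniform a.s.\ convergence.

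The proof is essentially routine once the decomposition is made; the only real care required is in the fSLLN step, where one must check that the product field $x_n\epsilon_n$ (and its derivatives) satisfies both the mean-zero and the integrable-sup conditions, and this is the place where both the independence of $x_n$ from $\epsilon_n$ and the full strength of Assumption \ref{ass:derivexch}b (all three parts, one per derivative order) get used.
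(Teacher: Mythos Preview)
Your proof is correct and follows essentially the same route as the paper's: factor as $\bigl(\tfrac{1}{N}X_N^TX_N\bigr)^{-1}\cdot\tfrac{1}{N}X_N^T\pmb{\epsilon}_N$, handle the first factor via Lemma~\ref{lem:inverse}, and apply the fSLLN componentwise to the second factor using independence of $x_n$ and $\epsilon_n$ together with Assumption~\ref{ass:derivexch}b to check the integrable-sup condition and mean zero. The paper writes out the argument for the first derivative entrywise and leaves the other orders to the reader, whereas you state the zeroth-order case first and then invoke the DE condition explicitly for the derivatives; these are cosmetic differences only.
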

\begin{proof}
	We will prove the result for the first derivative, the other results follow similarly. Now,
	\begin{equation*}
	\frac{1}{N}X_N^T \nabla \pmb{\epsilon}_N = \frac{1}{N}\smatrix{x_1, \cdots, x_N}\nabla \pmb{\epsilon}_N
	\end{equation*}
	and so for $ i = 1, \dots, p $ and $ j = 1, \dots, D $,  
	\begin{equation*}
	\left( \frac{1}{N}X_N^T \nabla\pmb{\epsilon}_N \right)_{i,j} = \frac{1}{N}\sum_{n = 1}^N\left( (X_N)^T_{in}(\nabla\pmb{\epsilon}_N)_{nj} \right) = \frac{1}{N}\sum_{n = 1}^N(x_n)_i \frac{\partial \epsilon_n}{\partial t_j} \convuas \mathbb{E}\left[ (x_1)_i \frac{\partial \epsilon_1}{\partial t_j} \right]
	\end{equation*}
	$\text{as } N\longrightarrow \infty. $ For each $ i $, $(x_n)_i $ for $ n = 1, \dots, N $ are i.i.d as are $ \frac{\partial \epsilon_n}{\partial t_j} $ for each $ j$, so for all $ i,j $, $ (x_n)_i \frac{\partial \epsilon_n}{\partial t_j}$ are i.i.d for $ n = 1, \dots, N $. Additionally by independence and since the noise satisfies Assumption \ref{ass:derivexch},
	\begin{equation*}
	\mathbb{E}\left[ \sup_{s \in S} \left| (x_n)_i \frac{\partial \epsilon_n}{\partial t_j} \right| \right] \leq \mathbb{E}\left| (x_n)_i \right| \mathbb{E}\left[ \sup_{s \in S}\left| \frac{\partial \epsilon_n}{\partial t_j} \right| \right] < \infty
	\end{equation*}	
	so the convergence above occurs by the fSLLN and the limit equals \begin{equation*}
	\mathbb{E}\left[ (x_1)_i \frac{\partial \epsilon_1}{\partial t_j} \right] = \mathbb{E}\left[ (x_1)_i \right]\mathbb{E}\left[ \frac{\partial \epsilon_1}{\partial t_j} \right] = 0.
	\end{equation*} 
	Now $ N(X_N^TX_N)^{-1} = (\frac{1}{N}X_N^TX_N)^{-1} \convas \Sigma^{-1}$ as $N \longrightarrow \infty $ (using Lemma \ref{lem:inverse}) and so
	\begin{equation*}
	\nabla(X_N^TX_N)^{-1}X_N^T\pmb{\epsilon}_N = \left( \frac{1}{N}X_N^TX_N \right)^{-1}\left( \frac{1}{N}X_N^T\nabla\pmb{\epsilon}_N  \right) \convuas 0 \text{ as } N \longrightarrow \infty.
	\end{equation*}
	Since
	\begin{align*}
	\hat{\gamma}_N - \gamma = w^T\hat{\beta}_N - w^T\beta = w^T(X_N^TX_N)^{-1}X_N^T(X\beta + \epsilon) - w^T\beta = w^T(X_N^TX_N)^{-1}X_N^T\pmb{\epsilon}_N,
	\end{align*}
	the second set of results follow immediately.
\end{proof}
Thus, if we assume that $ w^T\beta $ satisfies Assumption \ref{ass:peakconv}, Proposition \ref{prop:peakconv} applies in this linear model setting. The CLT results of Section \ref{SS41} can also be extending to the linear model as the following theorem shows.

\begin{thm}\label{thm:lmmean}
	In the linear model setting (described in Section \ref{SS:LM}) assume that $ \epsilon_1 $ satisfies Assumption \ref{ass:derivexch}a,b and that $ w^T\beta $ satisfies Assumption \ref{ass:peakconv}. Let $ \hat{\theta}_{j,N} = \argmax_{t \in B_j} w^T\hat\beta_N(t)$ and define 
	$ \boldsymbol{\hat{\theta}_{N}}$ and $ \bf{\theta} $ as in Theorem \ref{thm:meanclt}. Then 
	\begin{equation*}
	\sqrt{N}(\bf{\hat{\theta}_N} - \bf{\theta}) \convd \mathcal{N}(0, \bf{A}\bf{\Lambda}\bf{A}^T)
	\end{equation*} 
	as $ N \conv \infty $. Here $ \bf{A} $ $ \in \mathbb{R}^{DJ\times DJ}$ is a block diagonal matrix with $ J $ diagonal $ D\times D $ blocks such that, for $ j \in 1, \dots, J$, the $ j $th block is $ -\left( w^T\nabla^2 \beta(\theta_j)\right)^{-1}$ and $ \bf{\Lambda} \in \mathbb{R}^{DJ\times DJ}$ is a matrix such that for $ i,j = 1,\dots, J$ the $ ij $th $ D \times D$ block of $ \bf{\Lambda} $ is \\
	\begin{equation*}
	\cov(\nabla^T \epsilon_1(\theta_i) x_1^T\mathbb{E}\left[ x_1x_1^T \right]^{-1}w, \nabla^T \epsilon_1(\theta_j) x_1^T\mathbb{E}\left[ x_1x_1^T \right]^{-1}w).
	\end{equation*}
\end{thm}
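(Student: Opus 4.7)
The plan is to mirror the proof of Theorem \ref{thm:meanclt}, replacing the sample mean field by $\hat\gamma_N = w^T\hat\beta_N$ and taking advantage of the decomposition
\begin{equation*}
\hat\gamma_N = w^T\beta + w^T(X_N^TX_N)^{-1}X_N^T\pmb{\epsilon}_N
\end{equation*}
from equation \eqref{eq:GLMfield}. First, by Proposition \ref{prop:lmconv}, the noise term and its first two derivatives converge uniformly almost surely to zero. Combined with Assumption \ref{ass:peakconv} on $w^T\beta$, this lets us invoke Proposition \ref{prop:peakconv} to obtain identifiability of peaks, so that with probability tending to 1 the maximizer $\hat\theta_{j,N}$ exists in the interior of $B_j$ and satisfies $\hat\theta_{j,N} \convp \theta_j$ for each $j$. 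Twice continuous differentiability of $\hat\gamma_N$ follows from that of $\epsilon_1$ under Assumption \ref{ass:derivexch}a.

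Next I would Taylor-expand, as in \eqref{eq:Taylormu}, to write
\begin{equation*}
0 = \nabla\hat\gamma_N(\hat\theta_{j,N}) = \nabla\hat\gamma_N(\theta_j) + (\hat\theta_{j,N}-\theta_j)^T\nabla^2\hat\gamma_N(\theta^*_{j,N})
\end{equation*}
for some $\theta^*_{j,N}\in B_{\|\theta_j-\hat\theta_{j,N}\|}(\theta_j)$. Rearranging as in \eqref{eq:cltsort} reduces the theorem to two ingredients: (i) convergence in probability of the Hessian block $w^T\nabla^2\hat\beta_N(\theta^*_{j,N})$ to $w^T\nabla^2\beta(\theta_j)$, and (ii) a joint CLT for the stacked gradient $\sqrt{N}(\nabla^T\hat\gamma_N(\theta_1),\dots,\nabla^T\hat\gamma_N(\theta_J))^T$. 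For (i), Proposition \ref{prop:lmconv} gives $\nabla^2\hat\gamma_N \convuas w^T\nabla^2\beta$ uniformly on $S$, and uniform continuity of $w^T\nabla^2\beta$ on each $B_j$ plus $\theta^*_{j,N}\convp\theta_j$ handles the evaluation at the random point, exactly as in step (iv) of Section \ref{SS:meanthm}.

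For the CLT in (ii), I would write
\begin{equation*}
\sqrt{N}\,\nabla\hat\gamma_N(\theta_j) = w^T\Bigl(\tfrac{1}{N}X_N^TX_N\Bigr)^{-1}\cdot \frac{1}{\sqrt{N}}\sum_{n=1}^N x_n\,\nabla\epsilon_n(\theta_j).
\end{equation*}
Lemma \ref{lem:inverse} gives $(X_N^TX_N/N)^{-1}\convas \mathbb{E}[x_1x_1^T]^{-1}$. The summands $x_n\nabla\epsilon_n(\theta_j)$ are i.i.d.\ across $n$, and independence of $x_n$ from $\epsilon_n$ together with $\mathbb{E}[\nabla\epsilon_1(\theta_j)]=0$ (which follows from derivative exchangeability under Assumption \ref{ass:derivexch}b) yields mean zero, while second moments are finite by Cauchy--Schwarz and Assumption \ref{ass:derivexch}a,b. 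Applying the multivariate CLT to the $\mathbb{R}^{DJ}$-valued stacked sum and then Slutsky to absorb $w^T(X_N^TX_N/N)^{-1}$ into each block yields a joint normal limit for the stacked gradient. Combining with (i) via one more Slutsky argument produces the stated covariance: in block $(i,j)$ we get $\bigl(w^T\nabla^2\beta(\theta_i)\bigr)^{-1}\cdot \cov\bigl(\nabla^T\epsilon_1(\theta_i)x_1^T\mathbb{E}[x_1x_1^T]^{-1}w,\ \nabla^T\epsilon_1(\theta_j)x_1^T\mathbb{E}[x_1x_1^T]^{-1}w\bigr)\cdot\bigl(w^T\nabla^2\beta(\theta_j)\bigr)^{-1}$, matching the block-diagonal $\bf{A}$ and cross-covariance $\bf{\Lambda}$ in the statement.

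The main obstacle is the bookkeeping in step (ii): one has to be careful that the random matrix factor $w^T(X_N^TX_N/N)^{-1}$ commutes correctly past the $\frac{1}{\sqrt{N}}\sum$ (it does because it is constant in $n$) and that the resulting covariance is indeed the one claimed, which relies crucially on the independence of the design vectors from the noise field. Verifying the Amemiya/Xiaoxia conditions (CF and EE2) with these ingredients, as in Section \ref{SS:meanthm}, then delivers the conclusion.
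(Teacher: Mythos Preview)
Your proposal is correct and follows essentially the same approach as the paper's proof: mirror Theorem \ref{thm:meanclt}, use Proposition \ref{prop:lmconv} for the uniform convergence of the Hessian, and for the gradient CLT write $\sqrt{N}\nabla w^T(\hat\beta_N-\beta)(\theta_j)=w^T(\tfrac{1}{N}X_N^TX_N)^{-1}\tfrac{1}{\sqrt{N}}\sum_n x_n\nabla\epsilon_n(\theta_j)$, replace the random matrix by its limit via Lemma \ref{lem:inverse} and Slutsky, and apply the multivariate CLT to the stacked i.i.d.\ sum. The paper's write-up is slightly terser but the ingredients and their assembly are the same.
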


\subsection{The derivative of a $ \chi^2 $ field}\label{A:chi2deriv}
\begin{lem}\label{lem:chi2deriv}
	Let $Y_1, \dots, Y_N $ be zero mean i.i.d $ D $-dimensional Gaussian random fields on $ S $ (with variance $ \sigma^2$ that is not necessarily constant). Let $ U = \sum_{n = 1}^N Y_n^2$ and $ Y = (Y_1, \dots, Y_N)^T $, then
	\begin{equation*}
	\left.\nabla^T U(s) \right| Y(s) \sim \mathcal{N}\left(\frac{2\Gamma(s) U(s)}{\sigma(s)^2}, 4U(s)(\Lambda(s) - \Gamma(s) \Gamma(s)^T/\sigma^2(s))\right).
	\end{equation*} 
\end{lem}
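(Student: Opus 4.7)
My plan is to prove this by conditioning each term in the sum $\nabla^T U = 2 \sum_{n=1}^N Y_n \nabla^T Y_n$ separately on its associated $Y_n(s)$, exploiting the jointly Gaussian structure of $(Y_n(s), \nabla^T Y_n(s))$, and then combining across $n$ using independence.

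First, I would note that each $Y_n$ is a.s.\ differentiable, so by the chain rule
\begin{equation*}
\nabla^T U(s) = 2 \sum_{n=1}^N Y_n(s) \nabla^T Y_n(s).
\end{equation*}
The fields are i.i.d.\ zero-mean $C^1$ Gaussian, so by Proposition \ref{prop:Gaussfin} the DE condition holds, which guarantees $\mathbb{E}[\nabla^T Y_n(s)] = 0$ and identifies $\Gamma(s) = \mathbb{E}[Y_n(s)\nabla^T Y_n(s)]$ as the cross-covariance between $Y_n(s)$ and $\nabla^T Y_n(s)$.

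Next, I would apply standard Gaussian conditioning to the jointly Gaussian vector $(Y_n(s), \nabla^T Y_n(s)) \in \mathbb{R}^{1+D}$, which has covariance
\begin{equation*}
\begin{pmatrix} \sigma^2(s) & \Gamma(s)^T \\ \Gamma(s) & \Lambda(s) \end{pmatrix}.
\end{equation*}
Thus for each $n$,
\begin{equation*}
\nabla^T Y_n(s) \,\big|\, Y_n(s) \;\sim\; \mathcal{N}\!\left( \frac{Y_n(s)\,\Gamma(s)}{\sigma^2(s)},\, \Lambda(s) - \frac{\Gamma(s)\Gamma(s)^T}{\sigma^2(s)} \right).
\end{equation*}
Because the pairs $(Y_n(s), \nabla^T Y_n(s))$ are independent across $n$ (the underlying fields $Y_n$ are i.i.d.), conditional on the full vector $Y(s) = (Y_1(s), \ldots, Y_N(s))$ the random vectors $\nabla^T Y_1(s), \ldots, \nabla^T Y_N(s)$ remain independent with the conditional laws above.

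Finally, I would assemble the pieces. Conditional on $Y(s)$, $\nabla^T U(s) = 2\sum_n Y_n(s)\, \nabla^T Y_n(s)$ is a linear combination of independent Gaussians, hence Gaussian, with conditional mean
\begin{equation*}
2 \sum_{n=1}^N Y_n(s) \cdot \frac{Y_n(s)\,\Gamma(s)}{\sigma^2(s)} = \frac{2\Gamma(s)}{\sigma^2(s)} \sum_{n=1}^N Y_n(s)^2 = \frac{2\Gamma(s) U(s)}{\sigma^2(s)},
\end{equation*}
and conditional covariance
\begin{equation*}
4 \sum_{n=1}^N Y_n(s)^2 \left(\Lambda(s) - \frac{\Gamma(s)\Gamma(s)^T}{\sigma^2(s)}\right) = 4 U(s) \left(\Lambda(s) - \frac{\Gamma(s)\Gamma(s)^T}{\sigma^2(s)}\right),
\end{equation*}
which is precisely the statement. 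There is no real obstacle here beyond bookkeeping; the only subtle point is verifying that the jointly Gaussian structure at $s$ together with the DE condition (to identify $\Gamma$ as the $Y$-to-$\nabla Y$ cross-covariance and ensure $\nabla Y_n$ is mean-zero) genuinely permits the pointwise Gaussian conditioning used above, which is guaranteed since each $Y_n$ is an a.s.\ $C^1$ Gaussian field.
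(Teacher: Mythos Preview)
Your proposal is correct and follows essentially the same approach as the paper: both write the joint Gaussian covariance of $(Y_n,\nabla^T Y_n)$, apply the standard conditional Gaussian formula to obtain $\nabla^T Y_n \mid Y$, and then sum $2Y_n\nabla^T Y_n$ using independence across $n$ to obtain the stated mean and covariance. Your write-up is a bit more explicit about the role of the DE condition and independence across $n$, but the argument is the same.
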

\begin{proof}
	Evaluating all quantities pointwise, for each $ n = 1, \dots, D $,
	\begin{equation*}
	(Y_n, \nabla Y_n)^T \sim \mathcal{N}\left(0 , \pmatrix{\sigma^2. \Gamma^T; \Gamma. \Lambda}\right)
	\end{equation*} 
	\noindent as such using the formula for the conditional Gaussian distribution,
	\begin{equation*}
	\nabla^T Y_n | Y \sim \mathcal{N}\left(\frac{\Gamma}{\sigma^2}Y_n, \Lambda - \Gamma\Gamma^T/\sigma^2\right).
	\end{equation*} 
	Now differentiating $ U $ we find that
	\begin{align*}
	\nabla^T U | Y = 2\sum_{n = 1}^N Y_n \nabla^T Y_n 
	&\sim \mathcal{N}\left(\frac{2\Gamma}{\sigma^2}\sum_{n = 1}^N Y_n^2, 4\sum_{n = 1}^N Y_n^2(\Lambda - \Gamma\Gamma^T/\sigma^2)\right)\\
	&\sim \mathcal{N}\left(\frac{2\Gamma}{\sigma^2}U, 4U(\Lambda - \Gamma\Gamma^T/\sigma^2)\right).
	\end{align*} 
\end{proof}
\subsection{Expanding the first and second derivatives of Cohen's $ d $}
As mentioned in the Discussion, developing a Monte Carlo method for Cohen's $ d $ is difficult because it requires an understanding of the joint distribution of the first and second derivatives. These expansions are not simple and are shown below for completeness. Differentiating $ d_N $, we have:
\begin{equation*}
\nabla d_N = \left(\frac{\sqrt{N}\nabla \mu + \nabla Z_N}{V_N^{1/2}}\right) - 
\left(\frac{\sqrt{N}\mu+ Z_N}{2V_N^{3/2}}\right)\nabla V_N
\end{equation*}
and so (applying the vector product rule) it follows that
\begin{align*}
\nabla^2 d_N &= (\sqrt{N}\nabla \mu + \nabla Z_N)^T\left( -\frac{1}{2V_N^{3/2}}\nabla V_N\right) + \frac{1}{\sqrt{V_N}}(\sqrt{N}\nabla^2 \mu + \nabla^2 Z_N)\\
&\hspace{1cm}+ (\nabla V_N)^T\left(\frac{3(\sqrt{N}\mu+ Z_N)}{4V_N^{5/2}}\nabla V_N - \frac{\sqrt{N}\nabla \mu + \nabla Z_N}{2V_N^{3/2}}\right) - \frac{\sqrt{N}\mu+ Z_N}{2V_N^{3/2}}\nabla^2 V_N\\
&=  (\nabla \mu + \nabla Z_N/ \sqrt{N})^T\left( -\frac{1}{2(V_N/N)^{3/2}}\frac{\nabla V_N}{N}\right) + \frac{\nabla^2 \mu + \nabla^2 Z_N/\sqrt{N}}{\sqrt{V_N/N}}\\
&\hspace{1cm}+ (\nabla V_N/N)^T\left(\frac{3\mu + 3Z_N/\sqrt{N}}{4(V_N/N)^{5/2}}(\nabla V_N/N)- \frac{\nabla \mu + \nabla Z_N/\sqrt{N}}{2(V_N/N)^{3/2}}\right)\\  
&\hspace{1cm} +\frac{\mu+ Z_N/\sqrt{N}}{2(V_N/N)^{3/2}}\frac{\nabla^2 V_N}{N}. 
\end{align*}

\newpage
\section{Further simulations}\label{A:fs}
\subsection{1D simulations - mean function}\label{A:fsmean}
\subsubsection{Gaussian noise}
Our first set of supplementary simulations consists of 1D stationary Gaussian noise about two different types of peaks. In this setting we can leverage the stationarity of the fields to obtain good estimates for the variance and $ \Lambda $ because they are the same at every point. The peaks we use are sections of the pdfs of the $ \text{Beta}(1.5, 3)$ and $ \text{Beta}(1.5, 2)$ distributions. The first set of peaks are narrow and the second set are wider relative to the smoothness of the noise. The mean signal is calculated by repeating the peaks several times over the domain. The noise is obtained by smoothing white noise with a Gaussian kernel, with a FWHM ranging from $ 3 $ to $ 9 $ FWHM per voxel, to obtain a Gaussian SuRF $ \epsilon $ for each realization. We add these fields to the mean to obtain simulations with $ N \in \left\lbrace 20, 40, \dots, 180, 200 \right\rbrace $ and repeat this 1000 times in each setting.

\begin{figure}[h!]
	\begin{subfigure}[b]{0.32\textwidth}
		\centering
		\includegraphics[width=\textwidth]{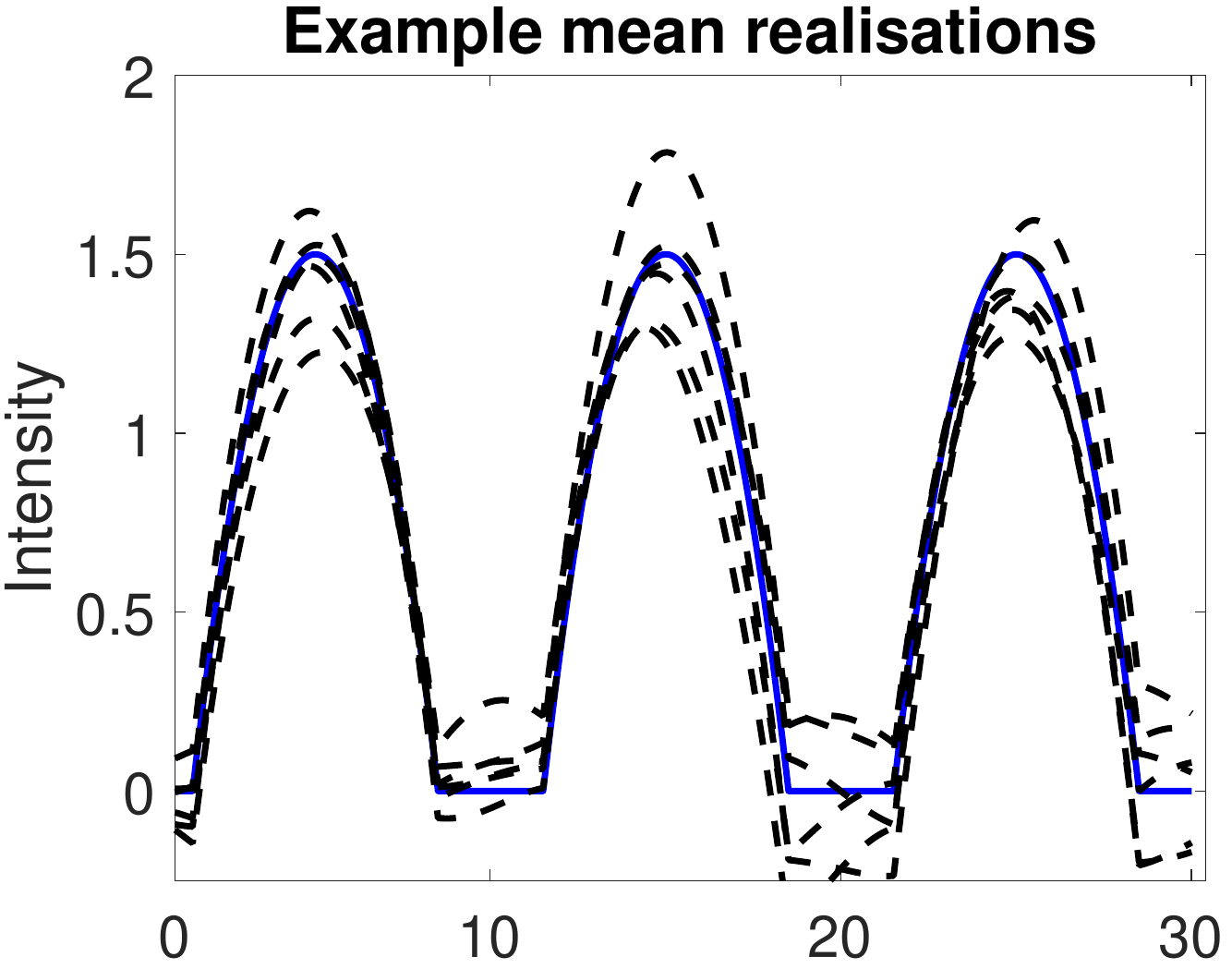} 
	\end{subfigure}
	\hfill
	\begin{subfigure}[b]{0.32\textwidth}  
		\centering 
		\includegraphics[width=\textwidth]{npeaks_3_paramsetting_1_quant_95_sim_type_N_dobonf_0_peakno_all_asym.png} 
	\end{subfigure}
	\hfill
	\begin{subfigure}[b]{0.32\textwidth}  
		\centering 
		\includegraphics[width=\textwidth]{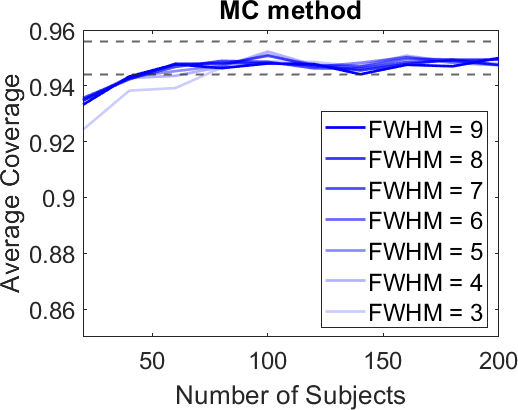} 
	\end{subfigure}
	\caption{Coverage of confidence intervals for the maximum of the mean for stationary, variance-one Gaussian noise added to the wide beta peak. The left panel contains the true mean and single realisations of the processes (which are the true mean plus smooth noise) and the mean of 50 i.i.d realisations (of fields generated with FWHM ) is shown in the lower left panel. The centre and right panels display the coverage of 95$ \% $ confidence intervals obtained using the asymptotic and Monte Carlo methods.
		Reasonable coverage is generally obtained for $ N \geq 60. $ From these graphs we can see that the Monte Carlo confidence regions have an improved finite sample coverage. As can be seen from the plots of the data, the peak of the means lie well within the image and so the simulations are not affected by edge effect issues.
	}\label{fig:statnarrow}
\end{figure}

\begin{figure}[h!]
	\begin{subfigure}[b]{0.32\textwidth}
		\centering
		\includegraphics[width=\textwidth]{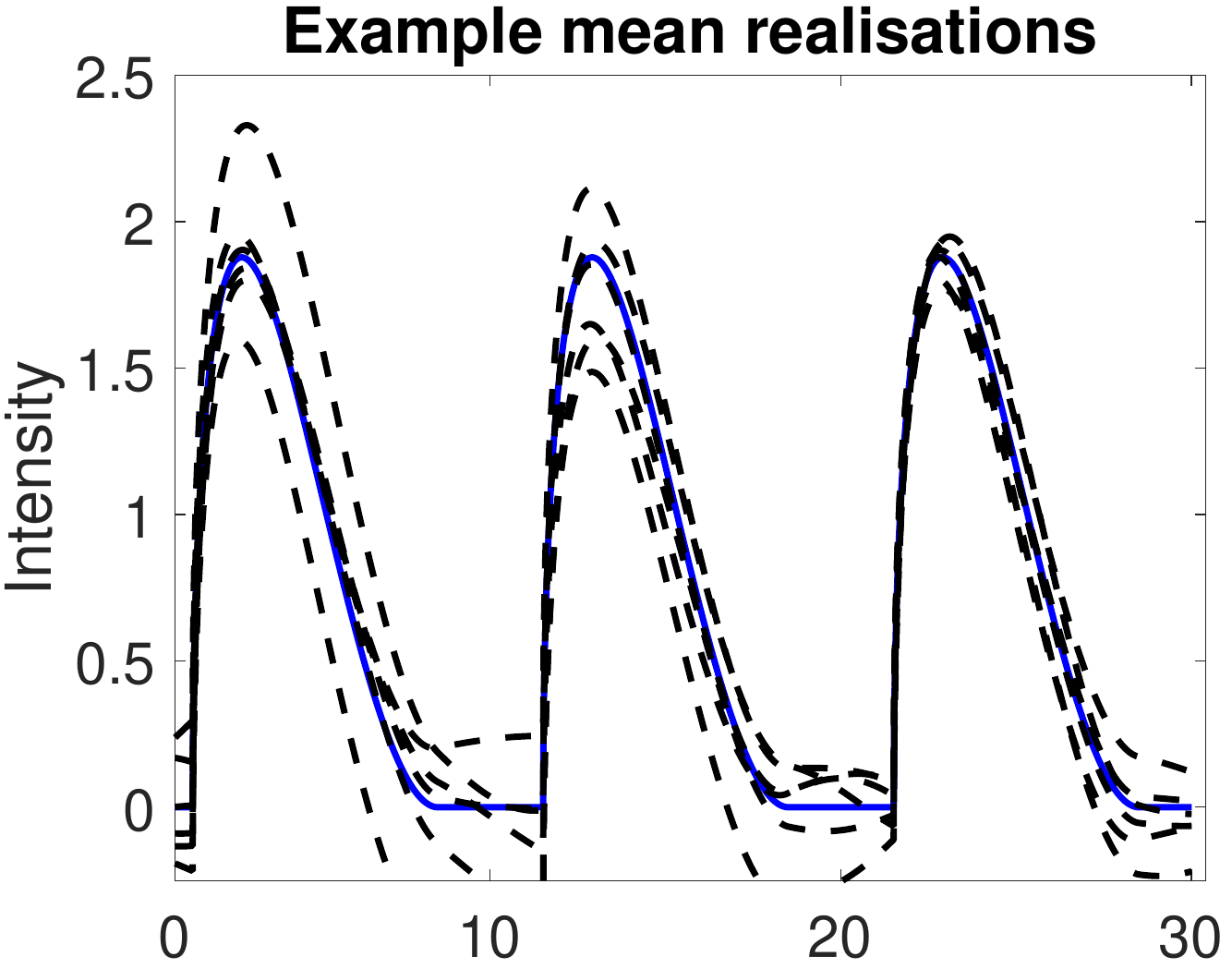} 
	\end{subfigure}
	\hfill
	\begin{subfigure}[b]{0.32\textwidth}  
		\centering 
		\includegraphics[width=\textwidth]{npeaks_3_paramsetting_3_quant_95_sim_type_N_dobonf_0_peakno_all_asym.png} 
	\end{subfigure}
	\hfill
	\begin{subfigure}[b]{0.32\textwidth}  
		\centering 
		\includegraphics[width=\textwidth]{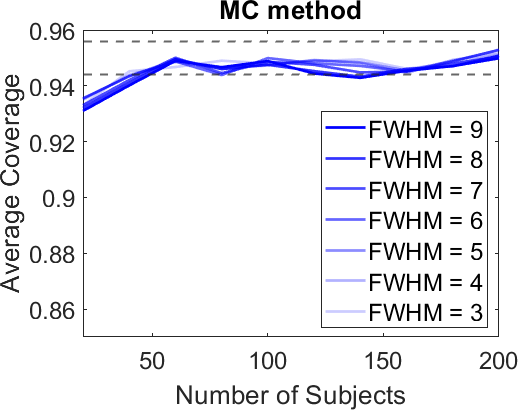} 
	\end{subfigure}
	\caption{Coverage of confidence intervals for the maximum of the mean for stationary, variance 1 Gaussian noise added to the narrower Beta peak, scaled such that Cohen's $ d $ is 1 at the peak. The layout of the plots is the same as in Figure \ref{fig:statnarrow}.}\label{fig:statwide}
\end{figure}

The results (shown for a scenario involving 3 peaks in Figures \ref{fig:statnarrow} and \ref{fig:statwide}) illustrate that, as the number of subjects increases, the coverage converges to the desired level. The dotted lines, in these and all other corresponding figures, give $ 95\% $ confidence bands and are obtained using the normal approximation to the binomial distribution. The coverage of the Monte Carlo confidence regions also converges asymptotically, at a slightly faster rate than that of the asymptotic method. 

The lower the FWHM of the noise relative to the shape of the peak, the larger the number of subjects that is needed to obtain the correct coverage. In many settings of interest high smoothness relative to the shape of the peak is a reasonable assumption, allowing us to obtain good coverage given available sample sizes. Here stationarity allows us to obtain better estimates of quantities involved (for both the asymptotic and Monte Carlo methods), such as $ \Lambda $, as we can average over the whole image. However, asymptotic coverage is achieved regardless of stationarity for both the asymptotic and Monte Carlo methods.

\begin{figure}[h!]
	\centering
	\begin{subfigure}[b]{0.32\textwidth}  
		\centering 
		\includegraphics[width=\textwidth]{npeaks_3_paramsetting_1_quant_95_sim_type_N_dobonf_1_asym.png} 
	\end{subfigure}
	\begin{subfigure}[b]{0.32\textwidth}  
		\centering 
		\includegraphics[width=\textwidth]{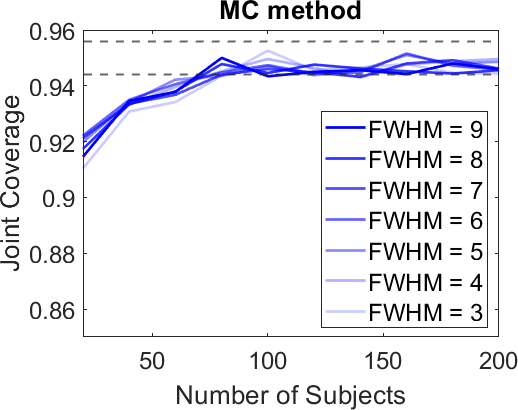} 
	\end{subfigure}
	\caption{Joint coverage of confidence intervals for the maximum of the mean for stationary, variance-one Gaussian noise added to the wide beta peak.
	}\label{fig:statwidejoint}
\end{figure}

\begin{figure}[h!]
	\centering
	\begin{subfigure}[b]{0.32\textwidth}  
		\centering 
		\includegraphics[width=\textwidth]{npeaks_3_paramsetting_3_quant_95_sim_type_N_dobonf_1_asym.png} 
	\end{subfigure}
	\begin{subfigure}[b]{0.32\textwidth}  
		\centering 
		\includegraphics[width=\textwidth]{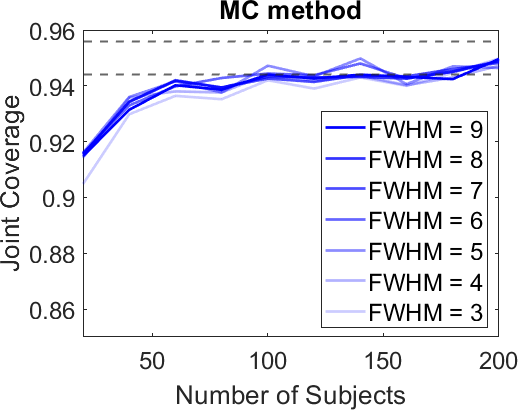} 
	\end{subfigure}
	\caption{Joint coverage of confidence intervals for the maximum of the mean for stationary, variance-one Gaussian noise added to the narrow beta peak.
	}\label{fig:statnarrowjoint}
\end{figure}

\newpage
\subsubsection{$ t $ noise}
Here we discuss a scenario where the noise is non-Gaussian. Instead of smoothing Gaussian white noise to obtain our SuRFs we instead smooth heavy tailed i.i.d noise that has been marginally generated from a $ t $-distribution with $ 3 $ degrees of freedom. After smoothing we add this noise to the same peaks as in the previous section. The results are similar, see Figures \ref{fig:twide} and \ref{fig:tnarrow}, but the methods take slightly longer to converge to the correct coverage as this is a more challenging setting.

\begin{figure}[h!]
	\centering
	\begin{subfigure}[b]{0.32\textwidth}  
		\centering 
		\includegraphics[width=\textwidth]{npeaks_3_paramsetting_1_quant_95_sim_type_tnoise_dobonf_0_peakno_all_asym.png} 
	\end{subfigure}
	\begin{subfigure}[b]{0.32\textwidth}  
		\centering 
		\includegraphics[width=\textwidth]{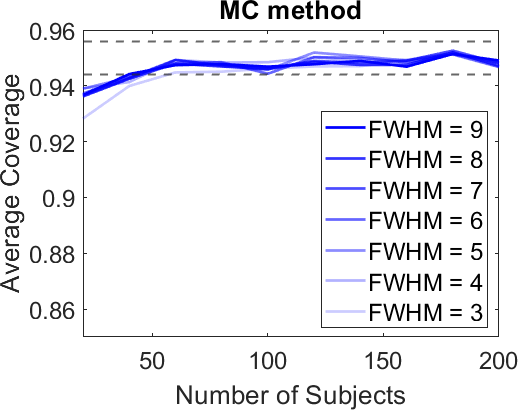} 
	\end{subfigure}\\
\vspace{0.25cm}
	\begin{subfigure}[b]{0.32\textwidth}  
		\centering 
		\includegraphics[width=\textwidth]{npeaks_3_paramsetting_1_quant_95_sim_type_tnoise_dobonf_1_asym.png} 
	\end{subfigure}
	\begin{subfigure}[b]{0.32\textwidth}  
		\centering 
		\includegraphics[width=\textwidth]{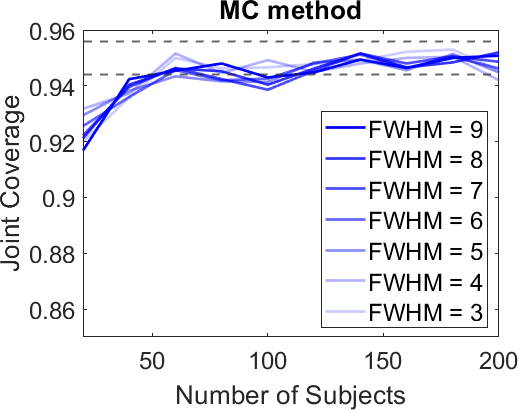} 
	\end{subfigure}
	\caption{Average and joint coverage of confidence intervals for the maximum of the mean for stationary, variance-one $ t_3 $ noise added to the wide beta peak.
	}\label{fig:twide}
\end{figure}

\begin{figure}[h!]
	\centering
	\begin{subfigure}[b]{0.32\textwidth}  
		\centering 
		\includegraphics[width=\textwidth]{npeaks_3_paramsetting_3_quant_95_sim_type_tnoise_dobonf_0_peakno_all_asym.png} 
	\end{subfigure}
	\begin{subfigure}[b]{0.32\textwidth}  
		\centering 
		\includegraphics[width=\textwidth]{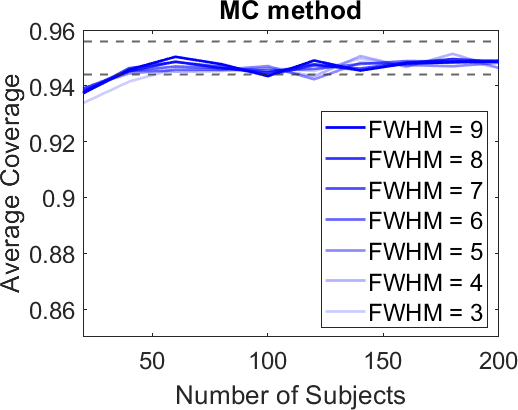} 
	\end{subfigure}\\
	\vspace{0.25cm}
	\begin{subfigure}[b]{0.32\textwidth}  
		\centering 
		\includegraphics[width=\textwidth]{npeaks_3_paramsetting_3_quant_95_sim_type_tnoise_dobonf_1_asym.png} 
	\end{subfigure}
	\begin{subfigure}[b]{0.32\textwidth}  
		\centering 
		\includegraphics[width=\textwidth]{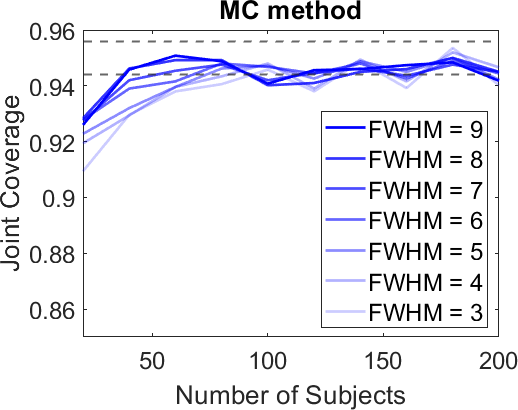} 
	\end{subfigure}
	\caption{Average and joint coverage of confidence intervals for the maximum of the mean for stationary, variance-one $ t_3 $ noise added to the narrow beta peak.
	}\label{fig:tnarrow}
\end{figure}

\newpage
\subsection{1D simulations - Cohen's $ d $}\label{A:fscd}
In this section we consider the same simulations as in the previous section - but infer on peaks of Cohen's $ d $ rather than the mean. The results are shown in Figure \ref{fig:t-Gnoise} for Gaussian noise and in Figure \ref{fig:t-tnoise} for t-noise. The results are similar to the asymptotic results for inferring on the mean (i.e. compare to the red curves in Figures \ref{fig:statnarrow}, \ref{fig:statwide}, \ref{fig:statwidejoint}, \ref{fig:statnarrowjoint}, \ref{fig:twide}, \ref{fig:tnarrow}). No Monte Carlo improvement is available for Cohen's $ d. $
\subsubsection{Gaussian noise}
\begin{figure}[h!]
	\centering
	\begin{subfigure}[b]{0.32\textwidth}  
		\centering 
		\includegraphics[width=\textwidth]{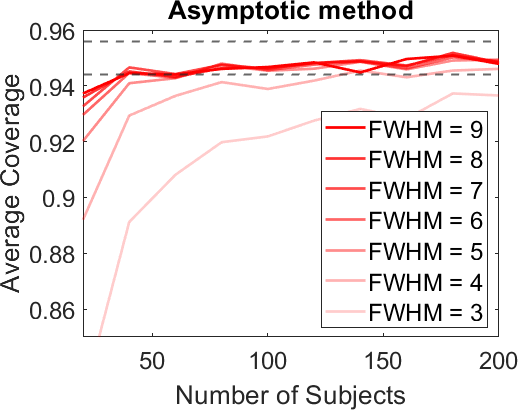} 
	\end{subfigure}
	\begin{subfigure}[b]{0.32\textwidth}  
		\centering 
		\includegraphics[width=\textwidth]{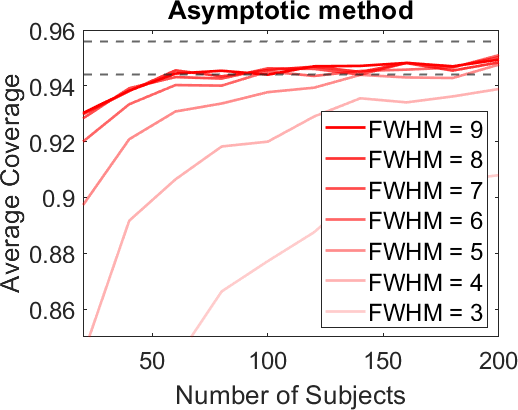} 
	\end{subfigure}\\
	\vspace{0.25cm}
	\begin{subfigure}[b]{0.32\textwidth}  
		\centering 
		\includegraphics[width=\textwidth]{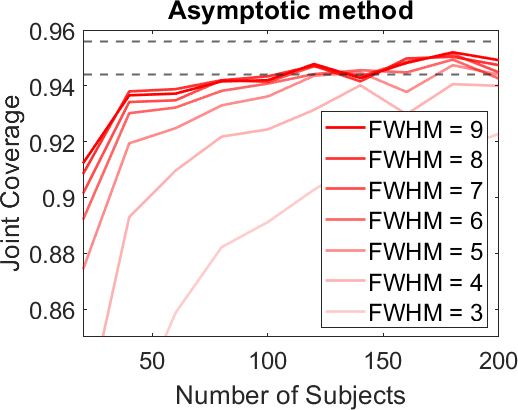} 
	\end{subfigure}
	\begin{subfigure}[b]{0.32\textwidth}  
		\centering 
		\includegraphics[width=\textwidth]{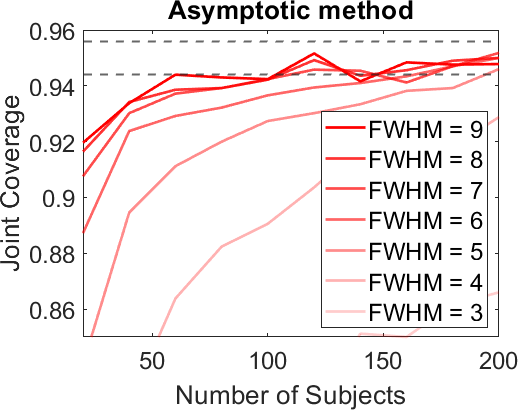} 
	\end{subfigure}
	\caption{Average and joint coverage of confidence intervals for the maximum of Cohen's $ d $ for stationary, variance-one $ t_3 $ noise added to the wide (left plots) and narrow (right plots) beta peak.
	}\label{fig:t-Gnoise}
\end{figure}
\newpage
\subsubsection{$ t $-noise}
\begin{figure}[h!]
	\centering
	\begin{subfigure}[b]{0.32\textwidth}  
		\centering 
		\includegraphics[width=\textwidth]{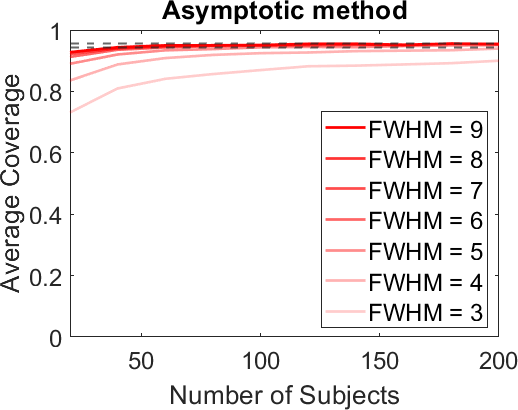} 
	\end{subfigure}
	\begin{subfigure}[b]{0.32\textwidth}  
		\centering 
		\includegraphics[width=\textwidth]{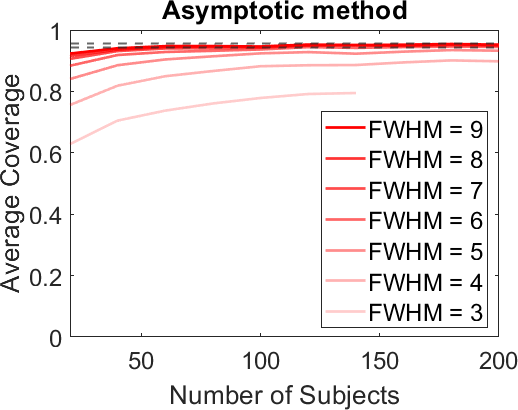} 
	\end{subfigure}\\
	\vspace{0.25cm}
	\begin{subfigure}[b]{0.32\textwidth}  
		\centering 
		\includegraphics[width=\textwidth]{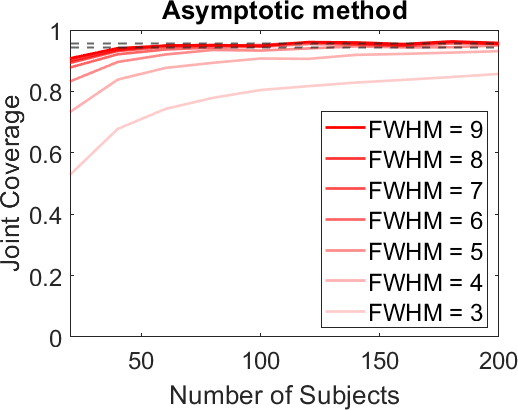} 
	\end{subfigure}
	\begin{subfigure}[b]{0.32\textwidth}  
		\centering 
		\includegraphics[width=\textwidth]{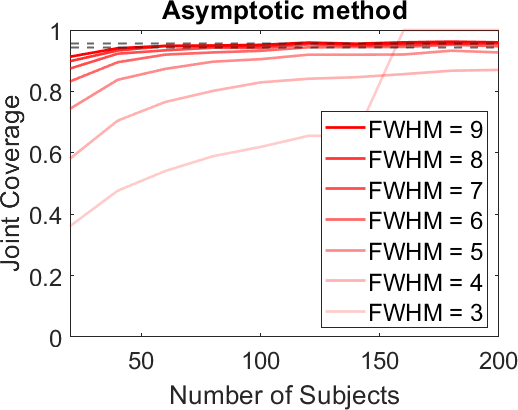} 
	\end{subfigure}
	\caption{Average and joint coverage of confidence intervals for the maximum of the mean for stationary, variance-one $ t_3 $ noise added to the wide (right plots) and narrow (left plots) beta peak.
	}\label{fig:t-tnoise}
\end{figure}
\newpage
\section{MEG data application}\label{S:MEGspectra}
We will now apply our methods in a real data setting. We have 1D MEG data from 79 subjects (from a single MEG node) and for each subject have around 6 minutes of time series data sampled at a rate of 240Hz (see \cite{Quinn2019} for details on the data and how it was collected). 

\subsection{Deriving MEG power spectra}\label{SS:MEGspectra}
We wish to infer on peaks in the power spectrum across subjects. We first derive the power spectra themselves. To do so, in order to infer on frequencies of interest in the data, we turn each time series into a periodogram using Welch's method (\cite{Welch1967}, \cite{Solomon1991}). For each subject $ n = 1, \dots, N $, let $ X_n(t)$ denote its regularly sampled time series. Given a segment length $ a $, we divide $ X_n(t) $ sequentially into segments of length $ a $ such that each overlaps by $ \lfloor \frac{a}{2} \rfloor $ data points (and ignore the final segment if this does not divide evenly). We window each segment using a Gaussian kernel (with parameter chosen so that the kernel tails down to a small value (0.05) at either end of the segment) to eliminate cutoff effects and then take the Fourier transform of these windowed segments. Let $ M_n $ denote the number of segments, and let $ X_{n,m} $ denote the $ m $th segment and let $ W \in \mathbb{R}^a $ be a window of Gaussian weights. Then 
\begin{equation*}
\mathcal{D}(W \cdot X_{n,m} ) = \mathcal{D}(W)\star\mathcal{D}(X_{n,m})
\end{equation*}
where $ \mathcal{D} $ denotes the (periodic) discrete Fourier transform, $ \cdot $ denotes pointwise multiplication and $ \star $ denotes convolution. In particular for $ x \in F := \left\lbrace \frac{240k}{a}: k \in \mathbb{Z} \right\rbrace  $,
\begin{equation}\label{eq:D}
\mathcal{D}(W \cdot X_{n,m})(x) = \sum_{y \in F} K(x-y) \mathcal{D}(X_{n,m})(y)
\end{equation}
where $ K $, is the discrete Fourier transform of $ W $ and is thus a Gaussian kernel. Since the Gaussian kernel is continuous (\ref{eq:D}) has a natural extension as a convolution field:
\begin{equation*}
\mathcal{D}_{c,n,m}(s) = \sum_{y \in F} K(s-y) \mathcal{D}(X_{n,m})(y)
\end{equation*}
defined on $ s \in [-120,120] $ Hz (it is in fact defined on all $ s \in \mathbb{R} $ but is periodic so we restrict to this bounded subset). In particular we can define $ \mathcal{P}_{n,m}(s) = \left\lVert \mathcal{D}_{c,n,m}(s) \right\rVert^2 $. Using \cite{Welch1967}'s approach we obtain the power spectrum field
\begin{equation*}
\mathcal{P}_n = \frac{10}{M_n} \sum_{m = 1}^{M_n} \log_{10}(\mathcal{P}_{n,m})
\end{equation*}
(where addition is performed pointwise) defined on $ s \in [-120,120] $ Hz. 

We apply the methods described above to derive power spectrum fields for each subject, using a segment length of one minute which corresponds to taking $ a = 240. $. A selection of these fields (and their mean) are shown in Figure \ref{fig:MEGmean}. In this setting the time series have varying length but all consist of around 70,000 time points, meaning that the time series are divided into segments of around 600 for each subject. As a result a large amount of averaging goes into calculating the power spectrum fields which are thus effectively Gaussian random fields; by the functional CLT.

\subsection{Power Spectrum Confidence Intervals}
Here we plot confidence intervals for peaks of the power spectra as described in Section \ref{SS:MEGmeg}.
\begin{figure}[h]
	\begin{center}
		\begin{subfigure}[b]{0.4\textwidth}
			\centering
			\includegraphics[width=\textwidth]{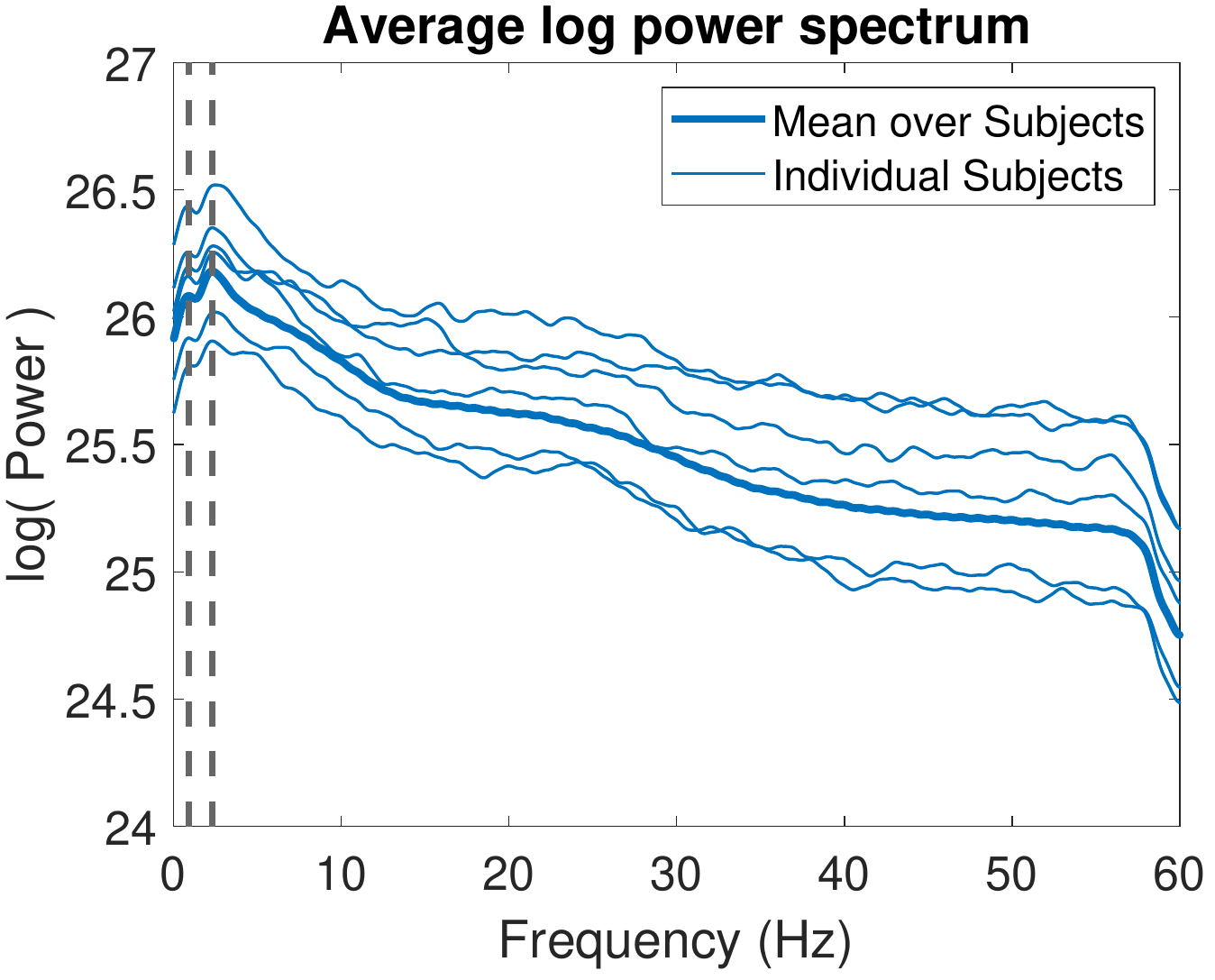}
		\end{subfigure}
		\hspace{1cm}
		\begin{subfigure}[b]{0.4\textwidth}
			\centering 
			\includegraphics[width=\textwidth]{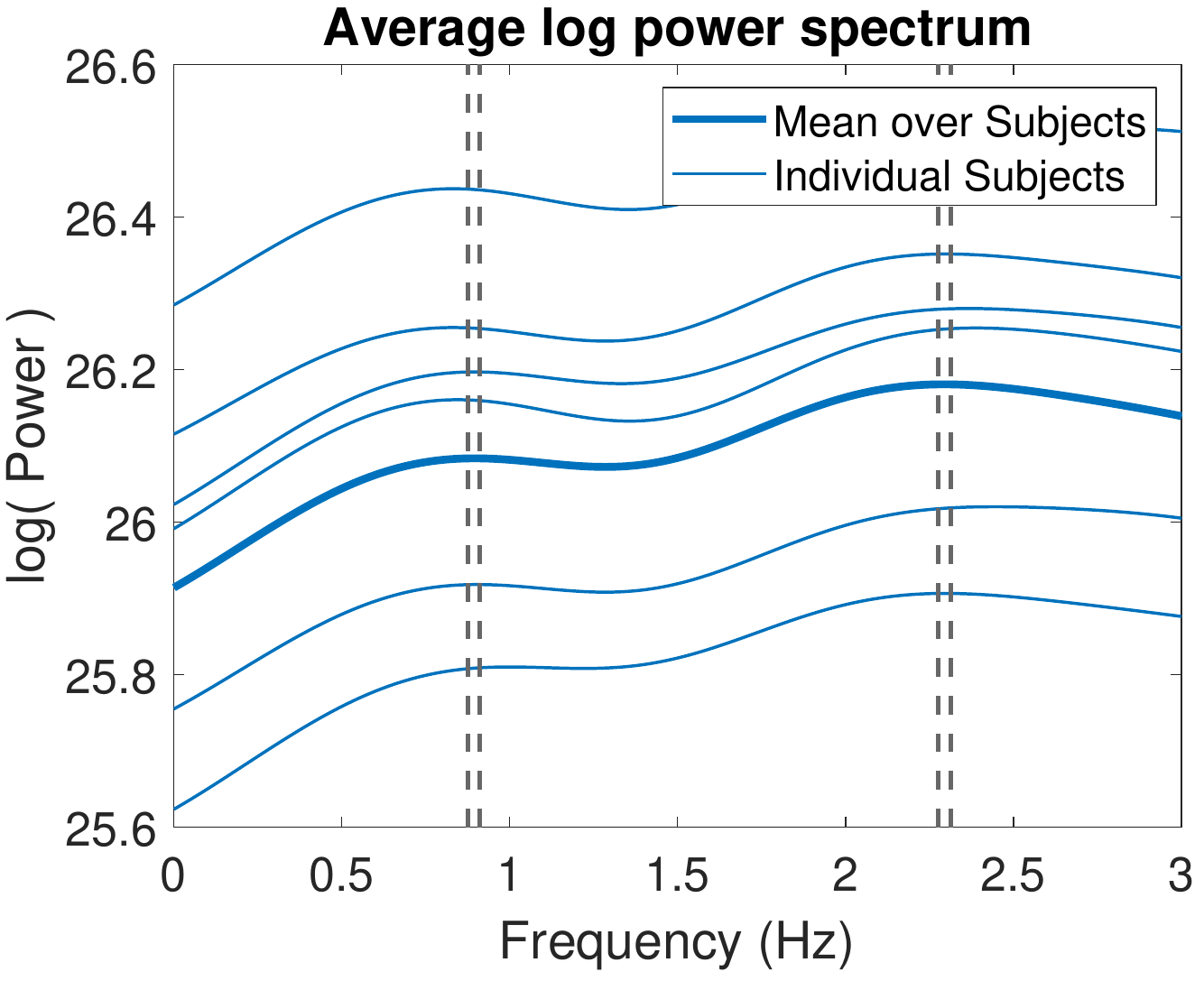}
		\end{subfigure}
	\end{center}
	\caption{Average power spectrum random fields across subjects and individual subject log power spectra are displayed in this figure. This is shown from frequencies of 0 to 60 Hz on the left and from 0 to 3 Hz on the right. The locations of the maxima of the observed mean are shown with dashed lines on the left and 95\% joint confidence intervals for the true peak locations are shown on the right. The top 2 peaks in the mean occur at 0.893 $ \pm \,0.017 $ Hz and 2.295 $ \pm \,0.019 $ Hz. }\label{fig:MEGmean}
\end{figure}
\begin{figure}[h]
	\begin{center}
		\includegraphics[width=0.4\textwidth]{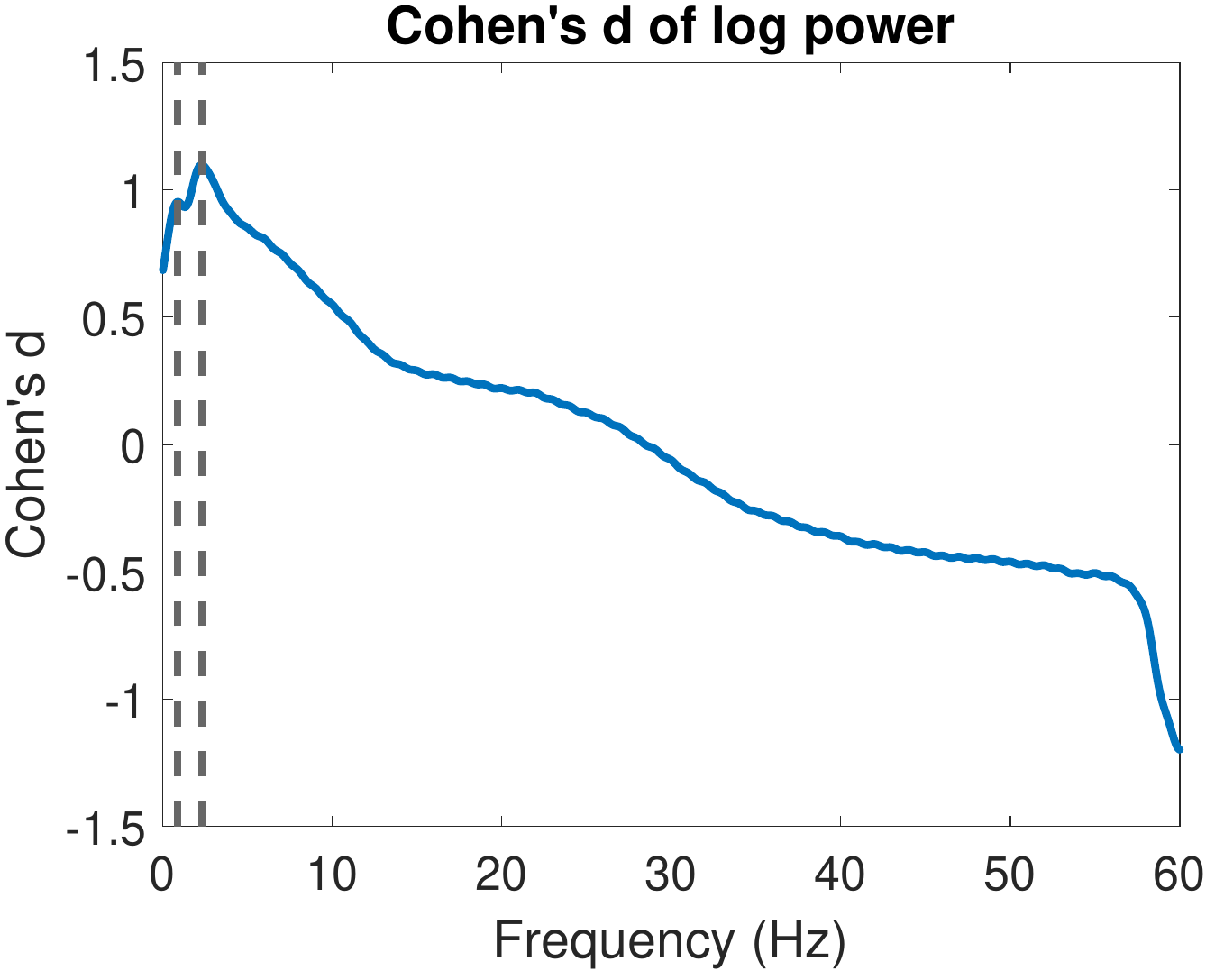}	
		\hspace{1cm}	\includegraphics[width=0.4\textwidth]{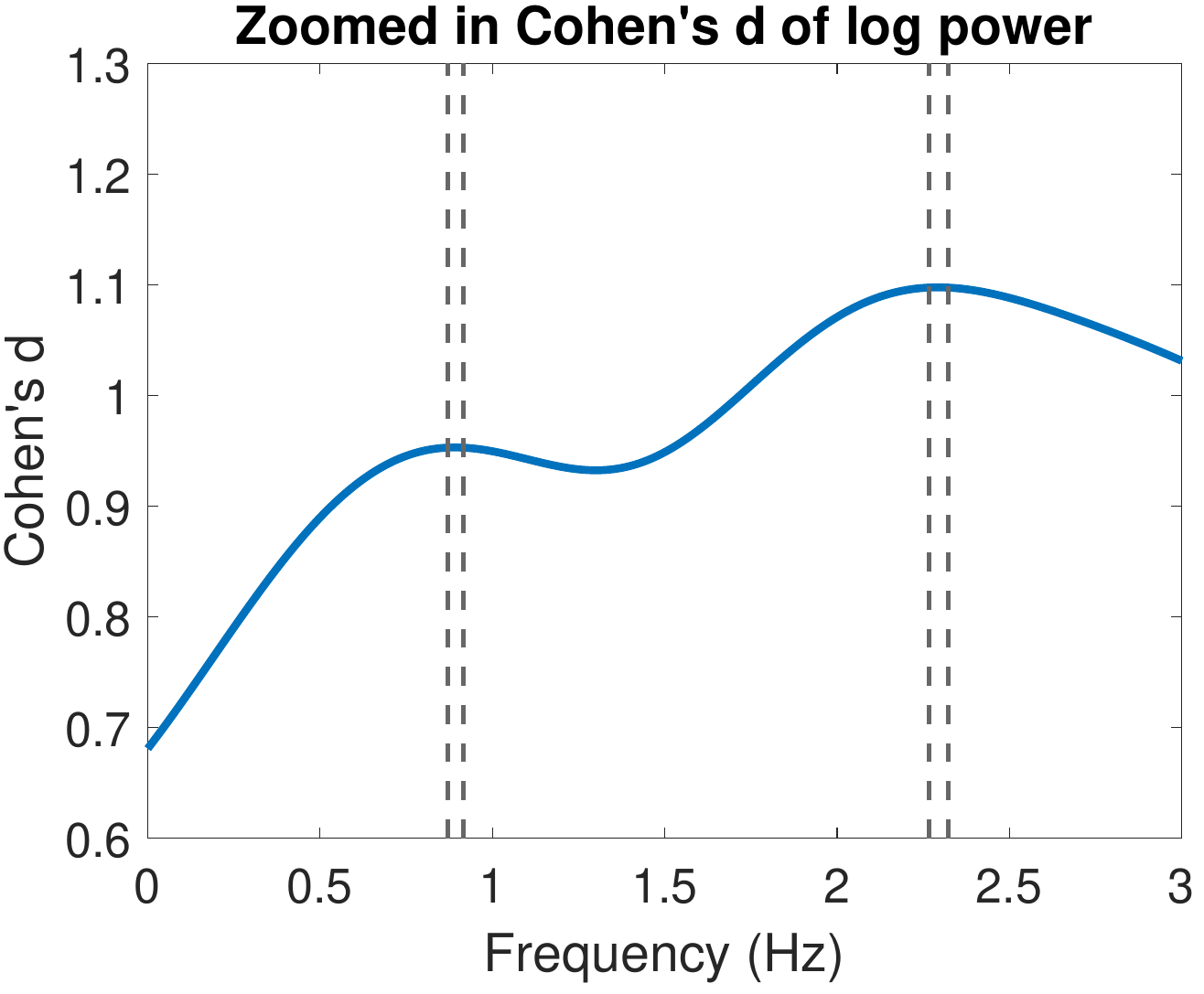}
	\end{center}
	\caption{95\% joint confidence intervals for the top two peaks of Cohen's $ d $ of the log power spectrum relative to its average.  The locations of the maxima of the observed mean are shown with dashed lines on the left and the confidence intervals for the true peak location are shown (in gray) on the right. The peaks occur at 0.893 $ \pm $ 0.023 Hz and 2.270 $ \pm $ 0.028 Hz .}\label{fig:MEGtstat}
\end{figure}
\newpage
\section{Cohen's $ d $ fMRI application}
Joint 95\% confidence regions for the peaks of Cohen's $ d $ in the fMRI data are shown in Figure \ref{fig:fMRI}.

\begin{figure}[H]
	\begin{center}
		\includegraphics[width=0.32\textwidth]{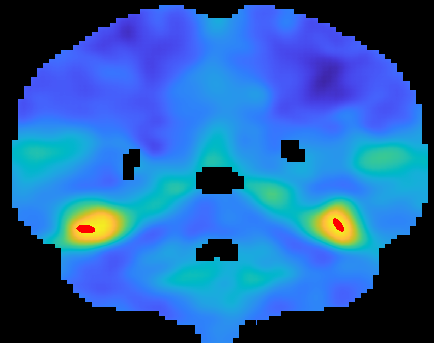}
		\includegraphics[width=0.32\textwidth]{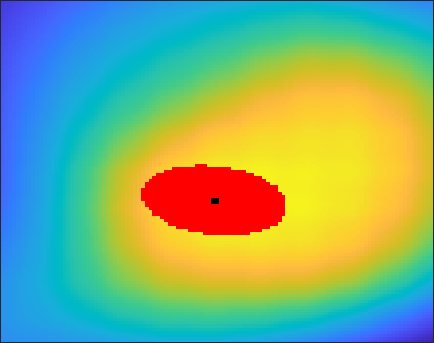}
		\includegraphics[width=0.32\textwidth]{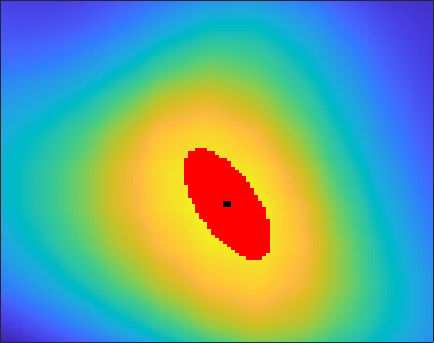}
	\end{center}
	\caption{Simultaneous confidence regions for peaks of Cohen's $ d $ derived from 125 images from the UK biobank. The $ 95\% $ confidence regions, corrected to allow for joint coverage over the two peaks, are shown in red: displayed over the Cohen's $ d $ of the images. The plots have the following interpretation: Left: whole brain slice. Middle and Right: zoomed in sections around each peak with a black dot indicating the location of each maximum. Note that no Monte Carlo approach is available for Cohen's $ d. $}\label{fig:cdfMRI}
\end{figure}

\section{Further Remarks}
\begin{rmk}
	If a random field $ f $ is $ L_1 $-Lipschitz on $ S $ then there exists an integrable  random variable $ L $ such that (given any $ s \in S $)
	\begin{equation*}
	\left| \sup_{t \in S}f(t) - f(s) \right| \leq L\sup_{t \in S}\left\lVert t-s \right\rVert = L{\rm diam}(S).
	\end{equation*}
	In particular, as $ S $ is bounded,
	\begin{equation*}
	\mathbb{E}| \sup_{t \in S}f(t)| \leq \mathbb{E}\left[ L \right]{\rm diam}(S) + \mathbb{E}\left| f(s) \right| <\infty. 
	\end{equation*}
\end{rmk}

\begin{rmk}
	It is in fact also possible to prove a CLT for Cohen's $ d $, see \cite{Telschow2020Delta} for further details.
\end{rmk}


\end{document}